\numberwithin{equation}{section}
\theoremstyle{plain}
\newtheorem{theorem}{Theorem}[section]
\newtheorem*{theorem*}{Theorem}
\newtheorem{lemma}[theorem]{Lemma}
\newtheorem*{lemma*}{Lemma}
\newtheorem{remark}[theorem]{Remark}
\newtheorem*{remark*}{Remark}
\newtheorem*{definition*}{Definition}
\newtheorem{test}{Test}
\newtheorem*{test*}{Test}
\renewenvironment{proof}[1][\proofname]{%
	\par\pushQED{\qed}\normalfont%
	\topsep6\p@\@plus6\p@\relax
	\trivlist\item[\hskip\labelsep\bfseries#1\@addpunct{.}]%
	\ignorespaces
}{%
	\popQED\endtrivlist\@endpefalse
}
\long\def\c{\mathrm{c}}
\long\def\M{\mathrm{M}}
\long\def\d{\mathrm{d}}
\long\def\e{\mathrm{e}}
\long\def\bigO{\mathcal{O}}
\long\def\L{\mathcal{L}}
\long\def\A{\mathcal{A}}
\long\def\I{\mathcal{I}}
\long\def\ie{\textit{i.e.}}
\long\def\eg{\textit{e.g.}}
\long\def\cf{\textit{cf.}}
\long\def\viz{\textit{viz.}}
\long\def\apriori{\textit{a priori}}
\def\articletitle{On Convergence of a Three-Layer Semi-Discrete Scheme for the Nonlinear Dynamic String Equation of Kirchhoff-Type with Time-Dependent Coefficients}
\def\articleauthorR{Jemal~Rogava}
\def\articleauthorV{Zurab~Vashakidze}
\title{\articletitle}
\date{} 					
\author{
        {\articleauthorR}\,\orcidlink{0000-0001-9460-4283}\\
	Faculty of Exact and Natural Sciences,\\
	Ivane Javakhishvili Tbilisi State University (TSU),\\
	Ilia Vekua Institute of Applied Mathematics (VIAM),\\
	2 University St., Tbilisi 0186, Georgia\\
	\href{mailto:jemal.rogava@tsu.ge}{\texttt{jemal.rogava@tsu.ge}}\\
	\And
	{\articleauthorV}\,\orcidlink{0000-0001-8736-6213}\\
	Institute of Mathematics, School of Science and Technology,\\
	The University of Georgia (UG),\\
	77a M. Kostava St., Tbilisi 0171, Georgia\\[6pt]
	Department of Numerical Mathematics and Modelling,\\
	Ilia Vekua Institute of Applied Mathematics (VIAM),\\
	Ivane Javakhishvili Tbilisi State University (TSU),\\
	2 University St., Tbilisi 0186, Georgia\\
	\href{mailto:zurab.vashakidze@gmail.com}{\texttt{zurab.vashakidze@gmail.com}}, \href{mailto:z.vashakidze@ug.edu.ge}{\texttt{z.vashakidze@ug.edu.ge}}
}
\begin{document}
\maketitle
\vfill
\begin{abstract}\label{abstract}
    This paper considers the Cauchy problem for the nonlinear dynamic string equation of Kirchhoff-type with time-varying coefficients. The objective of this work is to develop a time domain discretization algorithm capable of approximating a solution to this initial-boundary value problem. To this end, a symmetric three-layer semi-discrete scheme is employed with respect to the temporal variable, wherein the value of a nonlinear term is evaluated at the middle node point. This approach enables the numerical solutions per temporal step to be obtained by inverting the linear operators, yielding a system of second-order linear ordinary differential equations. Local convergence of the proposed scheme is established, and it achieves quadratic convergence regarding the step size of the discretization of time on the local temporal interval. We have conducted several numerical experiments using the proposed algorithm for various test problems to validate its performance. It can be said that the obtained numerical results are in accordance with the theoretical findings.
\end{abstract}
\vfill
\keywords{{Non-linear Kirchhoff string equation} \and {Cauchy problem} \and {Three-layer semi-discrete scheme}.}

\msc{{65M06} \and {65N12} \and {65N22} \and {65Q30}.}

\section{Introduction}\label{sec:intro}
\subsection{Formulation of the Problem}\label{subsec:statement}
Consider the Cauchy problem for the nonlinear dynamic Kirchhoff string equation
\begin{subequations}
    \begin{gather}
        \frac{{\partial}^{2}u\left( x,t \right)}{\partial{t}^{2}} - \left( \alpha\left( t \right) + \beta\left( t \right)\int\limits_{0}^{\ell}{\left[ \frac{\partial u\left( x,t \right)}{\partial x} \right]}^{2}\d x \right)\frac{{\partial}^{2}u\left( x,t \right)}{\partial{x}^{2}} = f\left( x,t \right)\,,\quad \left( x,t \right)\in \left( 0,\ell \right)\times \left( 0,T \right]\,,\label{eq:main_eqt} \\
        u\left( x,0 \right) = {\psi}_{0}\left( x \right)\,,\quad {u}_{t}^{\prime}\left( x,0 \right) = {\psi}_{1}\left( x \right)\,,\label{eq:initial_conds} \\
        u\left( 0,t \right) = 0\,,\quad u\left( \ell,t \right) = 0\,.\label{eq:boundary_conds}
    \end{gather}
\end{subequations}
Suppose that $\alpha\left( t \right) \geq {\c}_{0} > 0$ and $\beta\left( t \right) \geq {\c}_{1} > 0$, where $\alpha\left( t \right)$ and $\beta\left( t \right)$ are continuously differentiable functions. Further, let $f\left( x,t \right)$ be a continuous function, $u\left( x,t \right)$ an unknown function, and ${\psi}_{0}\left( x \right)$ and ${\psi}_{1}\left( x \right)$ continuous functions. It is also assumed that compatibility conditions are provided for the function ${\psi}_{0}\left( x \right)$, {\ie} ${\psi}_{0}\left( 0 \right) = {\psi}_{0}\left( \ell \right) = 0$.

\subsection{Historical Background, Physical Relevance, and Applications of the Problem}\label{subsec:his_bgd}

The nonlinear dynamic Kirchhoff string equation is a hyperbolic partial differential equation that accurately models the behaviour of an elastic string of length $\ell$,
\begin{equation}\label{eq:intro_kirchhoff_equation}
    {u}_{tt}\left( x,t \right) = \left( {\alpha} + {\beta}\int\limits_{0}^{\ell}{u}_{x}^{2}\left( x,t \right)\,\d{x} \right){u}_{xx}\left( x,t \right)\,,\quad {\alpha} > 0\,,\quad {\beta} > 0\,.\tag{K}
\end{equation}
This equation \eqref{eq:intro_kirchhoff_equation} was first derived by the prominent German physicist Gustav Kirchhoff in the mid-nineteenth century (see \cite{kirchhoff1876vorlesungen}). It serves as a prototypical example of a nonlinear wave equation, formulated as an extension of d'Alembert's wave equation for free vibrations of elastic strings. It considers factors such as tension, as well as external forces like gravity and air resistance. The solution of the equation provides the displacement of the string at any given point in time, and its application extends to a multitude of physical systems.

Here, in the context of the equation \eqref{eq:main_eqt}, $u\left( x,t \right)$ represents the displacement of the string from its equilibrium position at a specific point $x$ and $t$. The external force density is denoted by $f\left( x,t \right)$, while the tension in the string is expressed as $\alpha\left( t \right) + \beta\left( t \right)\int_{0}^{\ell}{u}_{x}^{2}\left( x,t \right)\,\d{x}$. This tension includes material parameters $\alpha\left( t \right)$ and $\beta\left( t \right)$, which may exhibit time dependence in our model. Notably, the key feature of the nonlinear Kirchhoff equation is that the tension is not constant and it varies nonlinearly with the spatial gradient of $u\left( x,t \right)$, reflecting variations in the tension due to geometric bending. The quantity $\alpha\left( t \right)$ represents the base tension of the string unaffected by displacement, while $\beta\left( t \right)$ scales the nonlinear contribution to the tension.

Under the assumption of time-independent material parameters, the range of applications spans from the classical representation of oscillations of thin beams, as originally formulated by Kirchhoff \cite{kirchhoff1876vorlesungen}, to contemporary developments in the realm of nanomechanical oscillators \cite{Yao2022}. Here, micro- or nanomechanical oscillators serve as building blocks for quantum transducers \cite{Lauk_2020}, for ultra-precise sensing devices \cite{Westerveld2021}, or as central elements in state-of-the-art quantum optics experiments with important examples being optomechanical quantum teleportation \cite{Fiaschi2021} and laser cooling \cite{Chan2011}.

Allowing the material parameters $\alpha$ and $\beta$ to become time-dependent enriches the dynamics and paves the way for numerous important applications. If $\alpha$ and $\beta$ are oscillating, they can describe vibrating systems where the vibration has a time-dependent influence on the material properties. Such oscillations might arise in classical applications when strings or cables are exposed to oscillating tensions from the operational vibrations of some machinery. In the case of nanomechanical devices, they can arise due to thermal vibrations or periodic excitations of the oscillator \cite{Samanta2023}. Furthermore, they can appear in quantum transducers, when the driving forces are strong enough to alter the material properties of an oscillator \cite{Sun2006}. Time-varying material constants also allow for the description of environmental changes such as temperature, pressure, or humidity. They could furthermore be used to model beams made out of smart materials whose properties can be modified in response to external events, {\eg} in order to better withstand earthquakes \cite{Song2006}.

\subsection{Survey of Analytical and Numerical Frameworks of the Problem}\label{subsec:analyt_and_num_frmwork}

Despite its antiquity, the Kirchhoff dynamic string equation remains a valuable tool in the study of wave motion and is extensively used in the fields of physics, engineering, and related sciences. Numerous distinguished scientists have contributed to the study of the equation and its various modifications. However, obtaining an exact solution to the equation remains a challenging problem, and numerical methods are commonly employed to approximate the solution. The equation has been extensively investigated using both numerical and analytical methods.

The conditions governing the existence of solutions for equation \eqref{eq:intro_kirchhoff_equation} were first examined by S. Bernstein in 1940 \cite{bernstein1940}. Bernstein's work focused on the scenario where the initial data are $2\pi$-periodic analytic functions on $\mathbb{R}$. Poho\v{z}aev extended Bernstein's results in 1975 \cite{pohozaev1975} to encompass the multidimensional case regarding spatial variables. In 1978, Lions presented an abstract framework for the Kirchhoff equation \eqref{eq:intro_kirchhoff_equation} in a publication that significantly contributed to its popularity \cite{lions1978}. Since then, many researchers have studied the solvability issues of the classical and modified Kirchhoff static and dynamic equations in general function spaces and developed a qualitative approach. These scholars include Alves, Corr\^{e}a, and Ma \cite{alves2005}; Arosio and Panizzi \cite{arosio1996}; Berselli and Manfrin \cite{bereselli2000}; D'Ancona and Spagnolo \cite{dancona1993,dancona1994,dancona1995}; Greenberg and Hu \cite{greenberg198081}; Kajitani \cite{kajitani2009}; Liu and Rincon \cite{liu2003}; Ma \cite{MA2005e1967}; Manfrin \cite{manfrin2002}; Matos \cite{matos1991}; Medeiros \cite{medeiros1979}; Nishihara \cite{nishihara1984}; and Rzymowski \cite{rzymowski2002}.

Over the years, many numerical methods have been developed to address the boundary and initial-boundary value problems arising from both the classical and modified Kirchhoff's string equations. These methods encompass a variety of techniques, including but not limited to finite difference, spectral, and finite element methods. Moreover, several of these methods have been combined with other numerical schemes, resulting in even more sophisticated approaches. Christie and Sanz-Serna \cite{christie1984} applied the finite element method to discretize the Kirchhoff equation \eqref{eq:intro_kirchhoff_equation}. They utilized predictor-corrector methods based on Crank-Nicolson-type schemes for time integration. Gudi \cite{gudi2012} investigated a finite element method for a second-order nonlinear elliptic boundary value problem of the stationary version of the generalized Kirchhoff equation. Here, the finite element system is replaced with an equivalent sparse system for which the Jacobian of the Newton-Raphson iterative method is sparse. Liu and Rincon \cite{liu2003} solved the Kirchhoff equation for nonlinear elastic strings with moving boundaries using the explicit Lax-Friedrichs difference scheme. Peradze \cite{peradze2005} developed an algorithm for the numerical solution of the dynamic Kirchhoff string equation, which utilizes the Galerkin method, the modified Crank-Nicolson difference scheme, and a Picard-type iteration process. In \cite{peradze2009}, Peradze proposed a numerical algorithm based on the projection method and the finite difference method for approximating the Kirchhoff wave equation with respect to spatial and time variables. In \cite{ren2017}, Q. Ren and H. Tian developed a numerical scheme for obtaining an approximate solution to a nonlocal stationary analogue of the Kirchhoff equation using the Legendre-Galerkin spectral method, which reduces the problem to a nonlinear finite-dimensional system, followed by an iterative solution. In all of the works considered, numerical algorithms are designed to combine the reduction of the original problem to a finite-dimensional system of nonlinear equations with an iterative process for finding an approximate solution to the obtained system.

The works authored by J. Rogava and M. Tsiklauri \cite{RogavaTsiklauri_LocConvg2012,RogavaTsiklauri_EvolEqt2014} are primarily concerned with the development and analysis of a symmetric three-layer semi-discrete scheme for solving the Cauchy problem associated with the abstract generalization of the dynamic Kirchhoff equation. In the proposed scheme, the value of a nonlinear term is evaluated at the middle node point, resulting in the transformation of the original problem into a linear one for each temporal layer. In \cite{vashakidze2020,vashakidze2022}, the same approach is extended through the combination of the Legendre-Galerkin spectral method for numerically solving the spatial one-dimensional nonlinear dynamic Kirchhoff string equation. This choice leads to the reduction of the stated nonlinear problem to a system of linear equations per temporal layer, with the corresponding coefficient matrix being sparse and featuring nonzero elements exclusively on the main diagonal, the second diagonal below it, and the second diagonal above it. In the article \cite{RogTsikVash2023}, a semi-discrete scheme is considered for an abstract analogue of the J. Ball beam integro-differential equation, regarding the general nonlinear case. The stability and convergence of this scheme are studied. The book authored by T. Jangveladze, Z. Kiguradze, and B. Neta \cite{JangKigNeta2016} addresses the design of algorithms for finding numerical solutions and conducting investigations regarding initial-boundary value problems for specific classes of integro-differential equations.

\subsection{Outline of the Problem}\label{subsec:out_prob}

The present study addresses the initial-boundary value problem \eqref{eq:main_eqt}-\eqref{eq:boundary_conds} pertaining to the nonlinear dynamic Kirchhoff-type string equation with time-dependent coefficients. Our objective is to obtain an approximate solution for this problem by employing a symmetric three-layer semi-discrete scheme with respect to the temporal variable. Notably, in this scheme, the nonlinear term is taken at the middle node point ({\cf} \cite{RogavaTsiklauri_LocConvg2012,RogavaTsiklauri_EvolEqt2014,vashakidze2020,vashakidze2022}). By applying this approach, the stated hyperbolic nonlinear partial differential equation can be reduced to a system of linear ordinary differential equations of second order. We complete an investigation into the convergence of the proposed semi-discrete scheme for approximating the solution to the problem. Our research demonstrates that the considered scheme attains quadratic convergence regarding the step size of the discretization of time on the local temporal interval.

From the standpoint of numerical implementation, a fourth-order accuracy finite-difference scheme is developed regarding the spatial variable, resulting in obtaining a tridiagonal system of linear equations at each time step. Furthermore, the coefficient matrices of these systems reveal strict diagonal dominance. The condition numbers of the coefficient matrices of these tridiagonal systems are estimated. Based on this estimate, a relation between the lengths of spatial and temporal grids is established, ensuring that the condition numbers remain within the practically accepted range. To validate the efficacy of the scheme, we subjected it to four carefully chosen test problems. In the first three tests, exact solutions are known, exhibiting oscillatory behaviour. Numerical experiments show that the order of convergence of the scheme aligns with the theoretical results. The proposed algorithm accurately describes the behaviour of the oscillating solutions. Notably, In the fourth test, we consider a homogeneous version of the Kirchhoff string equation \eqref{eq:main_eqt} with fixed coefficients and an unknown analytical solution. However, we propose an approach that allows us to find an experimental solution to this test problem. All results and numerical computations are detailed in the \hyperref[sec:numresult]{final section}. Furthermore, the source code for the proposed algorithm, implemented in GNU Octave, is available in the \href{https://github.com/zv1991/Semi-Discrete-Scheme-for-Kirchhoff-Eqn}{GitHub repository} and on \href{https://doi.org/10.5281/zenodo.10137813}{Zenodo} (see \cite{rogava_2023_10137922}).

\section{Description of the Time Domain Discretization Algorithm}\label{subsec:descr_algor}

Let the temporal interval $\left[ 0,T \right]$ be divided into equal sub-intervals ${\left[ {t}_{i - 1},{t}_{i} \right]}_{i = 1}^{n}$ using a uniform grid with step size $\tau$, \ie
\begin{equation*}
    0 = {t}_{0} < {t}_{1} < \cdots < {t}_{n - 1} < {t}_{n} = T\,,\quad {t}_{k} = {k}{\tau}\,,\quad {k} = 0, 1, \ldots, n\,,\quad {\tau} = \frac{T}{n}\,.
\end{equation*}
The equation given in \eqref{eq:main_eqt} is transformed into the following form at discrete time points $t = {t}_{k}$ ($k = 1, 2, \ldots, {n - 1}$):
\begin{equation}\label{eq:discrete_main_eqt}
    \frac{{\Delta}^{2}u\left( x,{t}_{k - 1} \right)}{{\tau}^{2}} - \frac{1}{2}q\left( {t}_{k} \right)\left( \frac{\d ^{2}u\left( x,{t}_{k + 1} \right)}{\d {x}^{2}} + \frac{\d ^{2}u\left( x,{t}_{k - 1} \right)}{\d {x}^{2}} \right) = f\left( x, {t}_{k} \right) + {R}_{1,k}\left( x,\tau \right) + {R}_{2,k}\left( x,\tau \right)\,,
\end{equation}
where
\begin{align*}
    {\Delta}u\left( x,{t}_{k} \right) &= u\left( x,{t}_{k + 1} \right) - u\left( x,{t}_{k} \right)\,,\\
    q\left( t \right) &= \alpha\left( t \right) + \beta\left( t \right)\int\limits_{0}^{\ell}{\left[ \frac{\partial u\left( x,t \right)}{\partial x} \right]}^{2}\d x\,,\\
    {R}_{1,k}\left( x,\tau \right) &= \frac{{\Delta}^{2}u\left( x,{t}_{k - 1} \right)}{{\tau}^{2}} - \frac{{\partial}^{2}u\left( x,{t}_{k} \right)}{\partial{t}^{2}}\,,\\
    {R}_{2,k}\left( x,\tau \right) &= -\frac{1}{2}q\left( {t}_{k} \right)\frac{\d ^{2}}{\d {x}^{2}}{\Delta}^{2}u\left( x,{t}_{k - 1} \right)\,.
\end{align*}
It is shown that ${R}_{1,k}\left( x,\tau \right) = \bigO\left( {\tau}^{2} \right)$ and ${R}_{2,k}\left( x,\tau \right) = \bigO\left( {\tau}^{2} \right)$ (see the \hyperref[theorem:theorem1]{\bf Theorem \ref*{theorem:theorem1}}). By neglecting the remainder terms ${R}_{1,k}\left( x,\tau \right)$ and ${R}_{2,k}\left( x,\tau \right)$ in equation \eqref{eq:discrete_main_eqt}, the following symmetric three-layer semi-discrete scheme is obtained
\begin{equation}\label{eq:semidiscrete_scheme}
    \frac{{\Delta}^{2}{u}_{k - 1}\left( x \right)}{{\tau}^{2}} - \frac{1}{2}{q}_{k}\left( \frac{\d ^{2}{u}_{k + 1}\left( x \right)}{\d {x}^{2}} + \frac{\d ^{2}{u}_{k - 1}\left( x \right)}{\d {x}^{2}} \right) = {f}_{k}\left( x \right)\,,\quad k = 1, 2, \ldots, {n - 1}\,,
\end{equation}
where ${f}_{k}\left( x \right) = f\left( x, {t}_{k} \right)$ and
\begin{equation}\label{eq:numres_qintegr}
    {q}_{k} = {\alpha}_{k} + {\beta}_{k}\int\limits_{0}^{\ell}{\left( \frac{\d {u}_{k}\left( x \right)}{\d x} \right)}^{2}\d x\,,\quad {\alpha}_{k} = \alpha\left( {t}_{k} \right)~\text{and}~{\beta}_{k} = \beta\left( {t}_{k} \right)\,.
\end{equation}
In the following, the solution to the obtained differential-difference equations \eqref{eq:semidiscrete_scheme} are denoted by ${u}_{k}\left( x \right)$ and are considered as approximate solutions to the problem \eqref{eq:main_eqt}-\eqref{eq:boundary_conds} at the time instants $t = {t}_{k}$, thus $u\left( x,{t}_{k} \right) \approx {u}_{k}\left( x \right)$.

We now express equation \eqref{eq:semidiscrete_scheme} in an alternative form
\begin{equation}\label{eq:discrt_operator_eqt}
    \left( 2{\I} - {\tau}^{2}{q}_{k}\frac{\d ^{2}}{\d {x}^{2}} \right){u}_{k + 1}\left( x \right) = {g}_{k}\left( x \right)\,,\quad k = 1, 2, \ldots, {n - 1}\,,
\end{equation}
where
\begin{equation*}
    {g}_{k}\left( x \right) = {2}\left( {\tau}^{2}{f}_{k}\left( x \right) + {2}{u}_{k}\left( x \right) \right) - \left( 2{\I} - {\tau}^{2}{q}_{k}\frac{\d ^{2}}{\d {x}^{2}} \right){u}_{k - 1}\left( x \right)\,.
\end{equation*}
For each discrete time layer, the boundary conditions are rewritten as follows:
\begin{equation}\label{eq:operator_diff_eq_bound_cond}
    {u}_{k + 1}\left( 0 \right) = 0\,,\quad {u}_{k + 1}\left( \ell \right) = 0\,.
\end{equation}
The values of the unknown functions at the zeroth and first layers are determined by the initial conditions given in \eqref{eq:initial_conds} and the equation \eqref{eq:main_eqt},
\begin{align}
    {u}_{0}\left( x \right) &= {\psi}_{0}\left( x \right)\,,\label{eq:numres_zerothlayr} \\
    {u}_{1}\left( x \right) &= {\psi}_{0}\left( x \right) + {\tau}{\psi}_{1}\left( x \right) + \frac{{\tau}^{2}}{2}{\psi}_{2}\left( x \right)\,,\quad {\psi}_{2}\left( x \right) = {f}_{0}\left( x \right) + {q}_{0}\frac{\d ^{2}{\psi}_{0}\left( x \right)}{\d {x}^{2}}\,.\label{eq:semidscrete_scheme_first_layer}
\end{align}

Let us define the notation for a second-order differential operator
\begin{equation}\label{eq:second_order_diff_operator}
    {\L}_{0} = -\frac{\d ^{2}}{\d {x}^{2}}\,,\quad D\left( {\L}_{0} \right) = \left\{ u\left( x \right) \in {C}^{2}\left( \left[ 0,\ell \right] \right) \mid u\left( 0 \right) = u\left( \ell \right) = 0 \right\}\,.
\end{equation}
With the use of the notation \eqref{eq:second_order_diff_operator}, equation \eqref{eq:semidiscrete_scheme} can be expressed as follows
\begin{equation}\label{eq:semidiscrete_scheme_operator_form}
    \frac{{\Delta}^{2}{u}_{k - 1}\left( x \right)}{{\tau}^{2}} + \frac{1}{2}{q}_{k}\left( {\L}_{0}{u}_{k + 1}\left( x \right) + {\L}_{0}{u}_{k - 1}\left( x \right) \right) = {f}_{k}\left( x \right)\,,\quad k = 1, 2, \ldots, {n - 1}\,,
\end{equation}
it can be easily deduced that through the integration by parts ${q}_{k}$ can be expressed in the following form using the notation \eqref{eq:second_order_diff_operator}, \ie
\begin{equation*}
    {q}_{k} = {\alpha}_{k} + {\beta}_{k}\left( {\L}_{0}{u}_{k}, {u}_{k} \right)\,,\quad {\alpha}_{k} = \alpha\left( {t}_{k} \right)~\text{and}~{\beta}_{k} = \beta\left( {t}_{k} \right)\,.
\end{equation*}
Throughout the text and in the subsequent discussions, $\left( {\cdot},{\cdot} \right)$ represents the inner product in ${L}_{2}\left( 0,\ell \right)$ and the associated norm is denoted by $\left\| {\cdot} \right\|$.

The extension of the operator ${\L}_{0}$ to a self-adjoint one is denoted by ${\L}$ (${\L}_{0} \subset {\L}$). By using this operator ${\L}$, the equation \eqref{eq:semidiscrete_scheme_operator_form} can be rewritten in the following manner, {\ie}
\begin{equation}\label{eq:semidiscrete_scheme_extension_operator_form}
    \frac{{\Delta}^{2}{u}_{k - 1}\left( x \right)}{{\tau}^{2}} + \frac{1}{2}{{\tilde q}_{k}}\left( {\L}{u}_{k + 1}\left( x \right) + {\L}{u}_{k - 1}\left( x \right) \right) = {f}_{k}\left( x \right)\,,\quad k = 1, 2, \ldots, {n - 1}\,,
\end{equation}
where
\begin{equation*}
    {\tilde q}_{k} = {\alpha}_{k} + {\beta}_{k}{\left\| {\L}^{\nicefrac{1}{2}} {u}_{k} \right\|}^{2}\,,\quad {\alpha}_{k} = \alpha\left( {t}_{k} \right)~\text{and}~{\beta}_{k} = \beta\left( {t}_{k} \right)\,.
\end{equation*}

\section{Error Estimation of the Approximate Solution Resulting from the Time Domain Discretization}\label{sec:error_estimate}
\subsection{Auxiliary Lemmata}\label{subsec:auxiliary_lemmata}
Prior to evaluating the error of the approximate solution of problem \eqref{eq:main_eqt}-\eqref{eq:boundary_conds}, it is necessary to consider a simple but important lemma that is crucial to fulfilling our objective. This lemma serves as a discrete analogue of the well-known {Gr\"{o}nwall}-type inequality. Numerous authors have considered various variations of this lemma ({\cf} \cite{BookDiffElaydi2005,emmrich1999}), and we present one such version.
\begin{lemma}[\textbf{Discrete {Gr\"{o}nwall}-type inequality}]\label{lemma:gronwall-inequality}
    Let $\left\{ {\varepsilon}_{k} \right\}$, $\left\{ {a}_{k} \right\}$ and $\left\{ {h}_{k} \right\}$ be sequences of nonnegative real numbers that satisfy the following inequality
    \begin{equation}\label{eq:aux_lemma_gronwall_main_inqt}
        {\varepsilon}_{k + 1} \leq \sum\limits_{i = 1}^{k}{{a}_{i}{\varepsilon}_{i}} + \sum\limits_{i = 0}^{k}{{h}_{i}}\,,
    \end{equation}
    then
    \begin{equation}\label{eq:aux_lemma_gronwall_inqt_to_proof}
        {\varepsilon}_{k + 1} \leq \sum\limits_{i = 1}^{k}{{\alpha}_{i,k}{h}_{i - 1}} + {h}_{k}\,,\quad {\alpha}_{i,k} = {\left( 1 + {a}_{i} \right)}{\cdots}{\left( 1 + {a}_{k} \right)}\,,\quad {i}\leq{k}\,.
    \end{equation}
\end{lemma}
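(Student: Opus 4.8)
The plan is to argue by induction on $k$ after recasting the hypothesis \eqref{eq:aux_lemma_gronwall_main_inqt} in a form that telescopes cleanly. Introduce the auxiliary quantity
\begin{equation*}
    S_k = \sum_{i=1}^{k} a_i \varepsilon_i + \sum_{i=0}^{k} h_i\,,\qquad S_0 = h_0\,,
\end{equation*}
so that \eqref{eq:aux_lemma_gronwall_main_inqt} becomes simply $\varepsilon_{k+1} \leq S_k$; in particular $\varepsilon_k \leq S_{k-1}$ for every $k \geq 1$. Taking the difference of two consecutive values of $S$ and invoking this bound yields
\begin{equation*}
    S_k - S_{k-1} = a_k \varepsilon_k + h_k \leq a_k S_{k-1} + h_k\,,
\end{equation*}
that is, the one-step recursion $S_k \leq (1 + a_k)\, S_{k-1} + h_k$ valid for all $k \geq 1$, where nonnegativity of the three sequences is what preserves the inequality under the multiplication by $a_k$ and the additions.

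Next I would unroll this recursion. A routine induction on $k$ — base case $k = 0$ being the identity $S_0 = h_0$, and the inductive step obtained by substituting $S_k \leq (1+a_k)S_{k-1} + h_k$ into the bound already established for $S_{k-1}$ and distributing the factor $(1+a_k)$ — gives
\begin{equation*}
    S_k \leq \left( \prod_{j=1}^{k}(1 + a_j) \right) h_0 + \sum_{m=1}^{k} \left( \prod_{j=m+1}^{k}(1 + a_j) \right) h_m\,,
\end{equation*}
with the usual convention that an empty product equals $1$. Re-indexing the last sum by $i = m+1$ and noting that $\prod_{j=i}^{k}(1 + a_j) = \alpha_{i,k}$ for $i \leq k$, that $\prod_{j=1}^{k}(1+a_j) = \alpha_{1,k}$, and that the $m = k$ (i.e.\ $i = k+1$) term carries the empty product and hence the coefficient $1$, the right-hand side is exactly $\sum_{i=1}^{k} \alpha_{i,k} h_{i-1} + h_k$. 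Combining with $\varepsilon_{k+1} \leq S_k$ produces \eqref{eq:aux_lemma_gronwall_inqt_to_proof}.

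The argument presents no genuine obstacle; the only point demanding care is the index bookkeeping as the recursion is unwound — one must track precisely which product of the factors $(1 + a_j)$ multiplies each $h_m$, and verify that $h_k$ ends up with the empty product (coefficient $1$), which is exactly why the statement of the lemma displays it as a separate summand rather than folding it into the sum over $\alpha_{i,k}$. An entirely equivalent route is a direct strong induction on the conclusion \eqref{eq:aux_lemma_gronwall_inqt_to_proof} itself, feeding the bounds already obtained for $\varepsilon_1,\ldots,\varepsilon_k$ back into $\sum_{i=1}^{k} a_i \varepsilon_i$; I prefer the reformulation via $S_k$ because it isolates the telescoping step and keeps the nested sums from proliferating.
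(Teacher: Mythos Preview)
Your proof is correct. The reformulation via $S_k$ works exactly as you describe: the one-step recursion $S_k \leq (1+a_k)S_{k-1} + h_k$ is valid, its unrolling yields the displayed product formula, and the re-indexing to recover $\sum_{i=1}^{k}\alpha_{i,k}h_{i-1}+h_k$ is accurate.

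The paper proceeds differently, taking precisely the route you mention at the end and set aside: it runs a direct strong induction on the conclusion \eqref{eq:aux_lemma_gronwall_inqt_to_proof} itself, plugging the inductive bounds for $\varepsilon_1,\ldots,\varepsilon_m$ back into $\sum_{i=1}^{m}a_i\varepsilon_i$. This produces a double sum $\sum_{i=2}^{m}\sum_{j=1}^{i-1}a_i\alpha_{j,i-1}h_{j-1}$, which is then handled by swapping the order of summation and invoking the telescoping identity $\alpha_{j,j}+\sum_{i=j+1}^{m}a_i\alpha_{j,i-1}=\alpha_{j,m}$. Your approach buys you a cleaner argument: the auxiliary $S_k$ absorbs the cumulative sum once and for all, so the induction becomes a single linear recursion rather than a nested-sum manipulation. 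The paper's approach, by contrast, requires no auxiliary definition and makes the role of the product factors $\alpha_{i,k}$ visible at every stage, at the cost of the extra index juggling you were keen to avoid.
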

\begin{proof}
    In order to prove this lemma, mathematical induction is applied. Before proceeding to the proof, it is advisable to note that ${\alpha}_{i,k} = {\alpha}_{i, i}\cdots{\alpha}_{k,k} = {\alpha}_{i,j}{\alpha}_{j + 1,k}$, ${1} \leq {i} \leq {j} \leq {k - 1}$. The validity of the statement \eqref{eq:aux_lemma_gronwall_inqt_to_proof} for ${k} = {0}$ is self-evident. This follows from \eqref{eq:aux_lemma_gronwall_main_inqt}, which provides evidence for the assertion that ${\varepsilon}_{1} \leq {h}_{0}$.
    
    Using \eqref{eq:aux_lemma_gronwall_main_inqt} for ${k} = {1}$, the following inequality can be derived through the application of the previously established inequality
    \begin{equation*}
        {\varepsilon}_{2} \leq {a}_{1}{\varepsilon}_{1} + {h}_{0} + {h}_{1} \leq \left( 1 + {a}_{1} \right){h}_{0} + {h}_{1} = {\alpha}_{1,1}{h}_{0} + {h}_{1}\,.
    \end{equation*}
    Let us consider the assumption that the inequality \eqref{eq:aux_lemma_gronwall_inqt_to_proof} holds true for any ${k} \leq {m - 1}$ (${m} \geq {1}$). Our objective is to demonstrate that \eqref{eq:aux_lemma_gronwall_inqt_to_proof} is also valid for ${k} = {m}$. As per \eqref{eq:aux_lemma_gronwall_main_inqt}, it follows that
    \begin{align*}
        {\varepsilon}_{m + 1} &\leq \sum\limits_{i = 1}^{m}{{a}_{i}{\varepsilon}_{i}} + \sum\limits_{i = 0}^{m}{{h}_{i}} = {a}_{1}{\varepsilon}_{1} + \sum\limits_{i = 2}^{m}{{a}_{i}{\varepsilon}_{i}} + \sum\limits_{i = 1}^{m}{{h}_{i - 1}} + {h}_{m} \\
        &\leq {a}_{1}{h}_{0} + \sum\limits_{i = 2}^{m}{{a}_{i}\left( \sum\limits_{j = 1}^{i - 1}{{\alpha}_{j,i - 1}{h}_{j - 1}} + {h}_{i - 1} \right)} + \sum\limits_{i = 1}^{m}{{h}_{i - 1}} + {h}_{m} \\
        &= \sum\limits_{i = 2}^{m}{\sum\limits_{j = 1}^{i - 1}{{a}_{i}{\alpha}_{j,i - 1}{h}_{j - 1}}} + \sum\limits_{i = 1}^{m}{{\alpha}_{i,i}{h}_{i - 1}} + {h}_{m} \\
        &= \sum\limits_{j = 1}^{m - 1}{\sum\limits_{i = j + 1}^{m}{{a}_{i}{\alpha}_{j,i - 1}{h}_{j - 1}}} + \sum\limits_{j = 1}^{m - 1}{{\alpha}_{j,j}{h}_{j - 1}} + {\alpha}_{m,m}{h}_{m - 1} + {h}_{m} \\
        &= \sum\limits_{j = 1}^{m - 1}{\left( {\alpha}_{j,j} + \sum\limits_{i = j + 1}^{m}{{a}_{i}{\alpha}_{j,i - 1}} \right){h}_{j - 1}} + {\alpha}_{m,m}{h}_{m - 1} + {h}_{m} \\
        &= \sum\limits_{j = 1}^{m - 1}{\left( 1 + {a}_{m} \right){\alpha}_{j,m - 1}{h}_{j - 1}} + {\alpha}_{m,m}{h}_{m - 1} + {h}_{m} \\
        &= \sum\limits_{j = 1}^{m}{{\alpha}_{j,m}{h}_{j - 1}} + {h}_{m}\,.
    \end{align*}
    Ultimately, the following conclusion can be drawn
    \begin{equation*}
        {\varepsilon}_{m + 1} \leq \sum\limits_{i = 1}^{m}{{\alpha}_{i,m}{h}_{i - 1}} + {h}_{m}\,.
    \end{equation*}
    Hence, through the application of mathematical induction, it can be established that the inequality \eqref{eq:aux_lemma_gronwall_inqt_to_proof} holds for all values of ${k} \geq {0}$.
\end{proof}
\begin{remark}\label{rmk:remark1}
    The application of the well-established inequality between the arithmetic and geometric means (the \uppercase{am}-\uppercase{gm} inequality) allows us to arrive at the following conclusion
    \begin{align*}
        {\alpha}_{i,k} &= \prod\limits_{j = i}^{k}{\left( 1 + {a}_{j} \right)} \leq {\left( {1} + \frac{1}{k - i + 1}\sum\limits_{j = i}^{k}{{a}_{j}} \right)}^{k - i + 1} \leq \exp{\left( \sum\limits_{j = i}^{k}{{a}_{j}} \right)} \leq {\e}^{{\nu}_{k}}\,,\quad {\nu}_{k} = \sum\limits_{j = 1}^{k}{{a}_{j}}\,.
    \end{align*}
    It is worth mentioning that if the series $\displaystyle \sum\limits_{j = 1}^{\infty}{{a}_{j}}$ converges, then the following estimate holds true
    \begin{equation*}
        {\alpha}_{i,k} \leq {\e}^{{\nu}}\,,\quad {\nu} = \sum\limits_{j = 1}^{\infty}{{a}_{j}}\,.
    \end{equation*}
\end{remark}

To maintain the self-contained nature of this paper, we state the following lemma along with a reference to its proof (see \cite[\textbf{Lemma 3.2}]{RogavaTsiklauri_LocConvg2012}).
\begin{lemma}[see the \textbf{Lemma 3.2} in \cite{RogavaTsiklauri_LocConvg2012}]\label{lemma:rogava-tsiklauri}
    Let the sequences of nonnegative numbers, ${\left\{ {\alpha}_{k} \right\}}_{k = 0}^{n}$ and ${\left\{ {c}_{k} \right\}}_{k = 0}^{n}$, be such that they fulfill the following inequality:
    \begin{equation*}
        {\alpha}_{k + 1} \leq {\alpha}_{k}\left( 1 + {\tau}{\alpha}_{k}^{s} \right) + {\tau}{c}_{k}\,,
    \end{equation*}
    where ${s} > 0$ and ${\tau} > 0$.

    Therefore, the estimate is valid
    \begin{equation*}
        {\alpha}_{k} \leq \frac{\alpha}{{\left( 1 - {s}{\alpha}^{s}{t}_{k}{a}_{k} \right)}^{\nicefrac{1}{s}}}\,,\quad {t}_{k} = {k}{\tau} < \frac{1}{{s}{\alpha}^{s}{a}_{k}}\,,\quad {\alpha} = \max\left( 1,{\alpha}_{0} \right)\,,\quad {a}_{k} = 1 + \max\limits_{{0} \leq {i} \leq {k}}{\left( {c}_{i} \right)}\,.
    \end{equation*}
\end{lemma}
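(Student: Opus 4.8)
The plan is to establish the bound by a discrete comparison argument against the solution of a Bernoulli-type ordinary differential equation, using a ``frozen-coefficient'' device to deal with the $k$-dependence of $a_k$. First I would fix an index $N \in \{0,1,\dots,n\}$ and set $a := a_N = 1 + \max_{0\le i\le N} c_i$, so that $a \ge 1$ and $0 \le c_i \le a - 1$ for every $i \le N$; there is nothing to prove unless $t_N < \bigl(s\alpha^s a\bigr)^{-1}$, which I therefore assume. I would then introduce the comparison function $\phi(t) := \alpha\bigl(1 - s\alpha^s a t\bigr)^{-1/s}$ on $[0,t_N]$, noting that it is precisely the solution of the initial value problem $\phi'(t) = a\,\phi(t)^{s+1}$, $\phi(0) = \alpha$, and that, since $\phi' > 0$ and $\phi'' = a^2(s+1)\phi^{2s+1} > 0$, it is positive, strictly increasing, and convex, with $\phi(t) \ge \phi(0) = \alpha \ge 1$ throughout. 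Write $\phi_j := \phi(t_j)$.

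The core of the argument is the claim that $\alpha_j \le \phi_j$ for all $0 \le j \le N$, which I would prove by induction on $j$. The base case is $\alpha_0 \le \alpha = \phi_0$. For the inductive step, assuming $\alpha_j \le \phi_j$, I use that $x \mapsto x + \tau x^{s+1}$ is nondecreasing on $[0,\infty)$ to pass from the recurrence $\alpha_{j+1} \le \alpha_j + \tau\alpha_j^{s+1} + \tau c_j$ to $\alpha_{j+1} \le \phi_j + \tau\phi_j^{s+1} + \tau c_j$; since $\phi_j \ge 1$ and $0 \le c_j \le a - 1$, the last term obeys $\tau c_j \le \tau(a-1)\phi_j^{s+1}$, so that $\alpha_{j+1} \le \phi_j + \tau a\phi_j^{s+1} = \phi_j + \tau\phi'(t_j)$. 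Convexity of $\phi$ then gives $\phi_{j+1} = \phi(t_{j+1}) \ge \phi(t_j) + \phi'(t_j)(t_{j+1}-t_j) = \phi_j + \tau\phi'(t_j)$, and combining the two inequalities yields $\alpha_{j+1} \le \phi_{j+1}$, closing the induction.

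In particular $\alpha_N \le \phi_N = \alpha\bigl(1 - s\alpha^s a_N t_N\bigr)^{-1/s}$, and since $N$ was arbitrary, specializing to $N = k$ produces exactly the stated estimate together with the condition $t_k < \bigl(s\alpha^s a_k\bigr)^{-1}$. The step I expect to be the main obstacle is identifying the sharp comparison function — recognizing that the recurrence discretizes the blow-up equation $\phi' = a\phi^{s+1}$, whose explicit solution gives the announced closed form — and then justifying the one-step passage $\phi_{j+1} \ge \phi_j + \tau\phi'(t_j)$ via convexity. The secondary technical point is the freezing of $a = a_N$, which is what makes the comparison ODE autonomous with a closed-form solution; the genuinely $k$-dependent bound is recovered only at the very end by taking $N = k$.
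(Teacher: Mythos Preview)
The paper does not supply its own proof of this lemma; it only states the result and defers to the cited reference (Rogava--Tsiklauri, Lemma~3.2) for the details, so there is no in-paper argument to compare against.

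Your proposal is correct. The comparison function $\phi(t)=\alpha\bigl(1-s\alpha^{s}at\bigr)^{-1/s}$ indeed solves $\phi'=a\phi^{s+1}$ with $\phi(0)=\alpha$, is convex, and satisfies $\phi\ge\alpha\ge 1$ on $[0,t_N]$ under the assumed restriction $t_N<(s\alpha^{s}a)^{-1}$. The inductive step is sound: monotonicity of $x\mapsto x+\tau x^{s+1}$ on $[0,\infty)$ passes the hypothesis $\alpha_j\le\phi_j$ through the recurrence; the bound $c_j\le a-1$ together with $\phi_j^{s+1}\ge 1$ absorbs the inhomogeneity into $\tau a\phi_j^{s+1}=\tau\phi'(t_j)$; and convexity gives the tangent-line inequality $\phi_{j+1}\ge\phi_j+\tau\phi'(t_j)$, closing the induction. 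The freezing $a=a_N$ is exactly the device that renders the comparison autonomous and yields the closed form, with the $k$-dependence recovered by specializing $N=k$ at the end. Nothing is missing.
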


\subsection{Main Lemmata}\label{subsec:main_lemmata}
It is noteworthy that, throughout the text, the following letters ${\c}$ and ${\M}$ enumerated with lower indices represent positive constants.
\begin{lemma}\label{lemma:lemma1}
    The sequences of functions $\left( {u}_{k}\left( x \right) - {u}_{k - 1}\left( x \right) \right) {/} {\tau}$ and ${\L}^{\nicefrac{1}{2}}{u}_{k}\left( x \right)$ are uniformly bounded with respect to the ${L}_{2}$-norm, {\ie}, there exist constants ${\M}_{1}$ and ${\M}_{2}$, which are independent of $\tau$, such that the following inequalities hold:
    \begin{equation*}
        \left\| \frac{{u}_{k} - {u}_{k - 1}}{\tau} \right\| \leq {\M}_{1}\,,\quad \left\| {\L}^{\nicefrac{1}{2}}{u}_{k} \right\| \leq {\M}_{2}\,,\quad {k} = 1, 2, \ldots, n\,.
    \end{equation*}
\end{lemma}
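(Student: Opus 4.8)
The plan is to derive a discrete energy identity for the scheme \eqref{eq:semidiscrete_scheme_extension_operator_form} and then close it by means of the discrete Gr\"{o}nwall inequality of Lemma \ref{lemma:gronwall-inequality}. Throughout, $\c$ denotes a positive constant independent of $\tau$ and $k$, not necessarily the same at each occurrence. Set
\begin{equation*}
    {v}_{k} = \frac{{u}_{k} - {u}_{k - 1}}{\tau}\,,\qquad {a}_{k} = {\left\| {\L}^{\nicefrac{1}{2}}{u}_{k} \right\|}^{2}\,,
\end{equation*}
and note that ${\Delta}^{2}{u}_{k - 1}{/}{\tau}^{2} = \left( {v}_{k + 1} - {v}_{k} \right){/}{\tau}$ while ${u}_{k + 1} - {u}_{k - 1} = {\tau}\left( {v}_{k + 1} + {v}_{k} \right)$. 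Taking the ${L}_{2}$-inner product of \eqref{eq:semidiscrete_scheme_extension_operator_form} with ${u}_{k + 1} - {u}_{k - 1}$ and using the self-adjointness of ${\L}$ (so that the mixed terms in $\left( {\L}\left( {u}_{k + 1} + {u}_{k - 1} \right), {u}_{k + 1} - {u}_{k - 1} \right)$ cancel, and $\left( {\L}{u}_{j}, {u}_{j} \right) = {\left\| {\L}^{\nicefrac{1}{2}}{u}_{j} \right\|}^{2} = {a}_{j}$) produces, for $k = 1, 2, \ldots, n - 1$, the identity
\begin{equation*}
    {\left\| {v}_{k + 1} \right\|}^{2} - {\left\| {v}_{k} \right\|}^{2} + \frac{1}{2}{\tilde q}_{k}\left( {a}_{k + 1} - {a}_{k - 1} \right) = {\tau}\left( {f}_{k}, {v}_{k + 1} + {v}_{k} \right)\,.
\end{equation*}

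The second, and key, step is algebraic. Writing ${\tilde q}_{k} = {\alpha}_{k} + {\beta}_{k}{a}_{k}$, I would introduce the discrete energy functional
\begin{equation*}
    {\mathcal{E}}_{k} = {\left\| {v}_{k + 1} \right\|}^{2} + \frac{{\alpha}_{k}}{2}\left( {a}_{k + 1} + {a}_{k} \right) + \frac{{\beta}_{k}}{2}{a}_{k + 1}{a}_{k}\,,\qquad k = 0, 1, \ldots, n - 1\,,
\end{equation*}
and verify the telescoping-with-remainder identity
\begin{equation*}
    \frac{1}{2}{\tilde q}_{k}\left( {a}_{k + 1} - {a}_{k - 1} \right) = \left( {\mathcal{E}}_{k} - {\left\| {v}_{k + 1} \right\|}^{2} \right) - \left( {\mathcal{E}}_{k - 1} - {\left\| {v}_{k} \right\|}^{2} \right) - {\rho}_{k}\,,
\end{equation*}
where ${\rho}_{k} = \tfrac{1}{2}\left( {\alpha}_{k} - {\alpha}_{k - 1} \right)\left( {a}_{k} + {a}_{k - 1} \right) + \tfrac{1}{2}\left( {\beta}_{k} - {\beta}_{k - 1} \right){a}_{k}{a}_{k - 1}$. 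Because $\alpha$ and $\beta$ are continuously differentiable, the mean value theorem gives $\left| {\alpha}_{k} - {\alpha}_{k - 1} \right| \leq {\c}\tau$ and $\left| {\beta}_{k} - {\beta}_{k - 1} \right| \leq {\c}\tau$, hence $\left| {\rho}_{k} \right| \leq {\c}\tau\left( {a}_{k} + {a}_{k - 1} + {a}_{k}{a}_{k - 1} \right)$. Substituting the telescoping identity into the energy identity turns the latter into
\begin{equation*}
    {\mathcal{E}}_{k} - {\mathcal{E}}_{k - 1} = {\tau}\left( {f}_{k}, {v}_{k + 1} + {v}_{k} \right) + {\rho}_{k}\,,\qquad k = 1, 2, \ldots, n - 1\,.
\end{equation*}

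The third step closes the recursion. Since ${\alpha}_{k} \geq {\c}_{0} > 0$ and ${\beta}_{k} \geq {\c}_{1} > 0$, the three nonnegative quantities ${a}_{k}$, ${a}_{k - 1}$, ${a}_{k}{a}_{k - 1}$ are each dominated by a $\tau$-independent multiple of ${\mathcal{E}}_{k - 1}$ (up to positive factors they are among the summands defining ${\mathcal{E}}_{k - 1}$), so $\left| {\rho}_{k} \right| \leq {\c}\tau{\mathcal{E}}_{k - 1}$; and by Cauchy--Schwarz, Young's inequality, and the boundedness of $\left\{ \left\| {f}_{k} \right\| \right\}$ (which follows from the continuity of $f$ on $\left[ 0,\ell \right] \times \left[ 0,T \right]$), together with ${\left\| {v}_{k + 1} \right\|}^{2} \leq {\mathcal{E}}_{k}$ and ${\left\| {v}_{k} \right\|}^{2} \leq {\mathcal{E}}_{k - 1}$, one gets $\left| {\tau}\left( {f}_{k}, {v}_{k + 1} + {v}_{k} \right) \right| \leq {\c}\tau\left( 1 + {\mathcal{E}}_{k} + {\mathcal{E}}_{k - 1} \right)$. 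Summing the energy identity over $j = 1, \ldots, k$, inserting these bounds, and absorbing the resulting ${\c}\tau{\mathcal{E}}_{k}$-term on the left for $\tau$ below a fixed threshold yields
\begin{equation*}
    {\mathcal{E}}_{k} \leq {\c}\tau\sum\limits_{j = 0}^{k - 1}{{\mathcal{E}}_{j}} + {\c}\left( 1 + {\mathcal{E}}_{0} \right)\,,
\end{equation*}
to which Lemma \ref{lemma:gronwall-inequality} together with Remark \ref{rmk:remark1} applies (constant sequence ${a}_{i} = {\c}\tau$, $k\tau \leq T$), giving ${\mathcal{E}}_{k} \leq {\c}\left( 1 + {\mathcal{E}}_{0} \right){\e}^{{\c}T}$ for $k = 0, 1, \ldots, n - 1$. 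Finally, ${\mathcal{E}}_{0}$ is bounded independently of $\tau$: by \eqref{eq:semidscrete_scheme_first_layer} one has ${v}_{1} = {\psi}_{1} + \tfrac{\tau}{2}{\psi}_{2}$ and ${u}_{1} = {\psi}_{0} + {\tau}{\psi}_{1} + \tfrac{{\tau}^{2}}{2}{\psi}_{2}$, so for $\tau \leq 1$ the quantity ${\mathcal{E}}_{0}$ is controlled by fixed norms of ${\psi}_{0}$, ${\psi}_{1}$, ${\psi}_{2}$. Since ${\left\| {v}_{k + 1} \right\|}^{2} \leq {\mathcal{E}}_{k}$ and ${\left\| {\L}^{\nicefrac{1}{2}}{u}_{k + 1} \right\|}^{2} = {a}_{k + 1} \leq \left( {2}/{{\c}_{0}} \right){\mathcal{E}}_{k}$, both asserted bounds follow with $\tau$-independent constants ${\M}_{1}$ and ${\M}_{2}$.

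The main obstacle is the algebra of the second step: identifying the discrete energy functional ${\mathcal{E}}_{k}$ for which the nonlinear, time-dependent term $\tfrac{1}{2}{\tilde q}_{k}\left( {a}_{k + 1} - {a}_{k - 1} \right)$ telescopes up to a remainder of order $\tau$. The decisive feature is that the Kirchhoff nonlinearity forces the \emph{product} $\tfrac{{\beta}_{k}}{2}{a}_{k + 1}{a}_{k}$ into ${\mathcal{E}}_{k}$, rather than a symmetric quadratic such as $\tfrac{{\beta}_{k}}{4}\left( {a}_{k + 1}^{2} + {a}_{k}^{2} \right)$; this product is nonnegative, which keeps ${\mathcal{E}}_{k}$ coercive in ${\left\| {v}_{k + 1} \right\|}^{2}$ and ${a}_{k + 1}$ --- precisely what is needed both to dominate the remainder ${\rho}_{k}$ by ${\mathcal{E}}_{k - 1}$ and to extract the two bounds at the end. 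A minor, routine point is that ${\mathcal{E}}_{k}$ itself reappears on the right-hand side of the recursion through the source term; this is harmless and is absorbed once $\tau$ is small, which is why the constants are asserted to be $\tau$-independent only for $\tau$ sufficiently small. One also tacitly uses that every iterate lies in the domain of ${\L}$, which is legitimate because the elliptic operator $2{I} + {\tau}^{2}{q}_{k}{\L}$ defining ${u}_{k + 1}$ is uniformly coercive ($q_{k} \geq {\c}_{0} > 0$) and the data are assumed smooth enough.
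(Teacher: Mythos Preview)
Your argument is correct and follows essentially the same energy-method route as the paper: both proofs test \eqref{eq:semidiscrete_scheme_extension_operator_form} against $u_{k+1}-u_{k-1}$, build a discrete energy containing the product term $\tfrac{\beta_k}{2}a_{k+1}a_k$ (your $\mathcal{E}_k$ coincides, up to an $O(\tau)$ shift in the $\alpha$-coefficients, with the paper's $\lambda_{k+1}+\xi_{k+1}$), bound the coefficient-variation remainder by $\c\tau$ times the energy using the lower bounds on $\alpha,\beta$, and close with the discrete Gr\"{o}nwall lemma. The only noteworthy difference is tactical: to control the forward term coming from the source you use Young's inequality and absorb $\c\tau\mathcal{E}_k$ on the left for small $\tau$, whereas the paper passes to $\delta_k=\sqrt{\lambda_k+\xi_k}$ and a $\max$-argument, thereby avoiding any smallness restriction on $\tau$; either device is standard and adequate here.
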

\begin{proof}
    By taking the inner product of both sides of equation \eqref{eq:semidiscrete_scheme_extension_operator_form} with ${u}_{k + 1} - {u}_{k - 1} = \Delta{u}_{k} + \Delta{u}_{k - 1}$ and applying integration by parts, we can obtain
    \begin{equation}\label{eq:lemma1_inner_product_eqt}
        {\left\| \frac{\Delta{u}_{k}}{\tau} \right\|}^{2} + \frac{1}{2}{{\tilde q}_{k}}{\left\| {\L}^{\nicefrac{1}{2}}{u}_{k + 1} \right\|}^{2} = {\left\| \frac{\Delta{u}_{k - 1}}{\tau} \right\|}^{2} + \frac{1}{2}{{\tilde q}_{k}}{\left\| {\L}^{\nicefrac{1}{2}}{u}_{k - 1} \right\|}^{2} + \left( {f}_{k},\Delta{u}_{k} \right) + \left( {f}_{k},\Delta{u}_{k - 1} \right)\,,
    \end{equation}
    recall that,
    \begin{gather*}
        \Delta{u}_{k} = {u}_{k + 1} - {u}_{k}\,,\\
        {\tilde q}_{k} = {\alpha}_{k} + {\beta}_{k}{\left\| {\L}^{\nicefrac{1}{2}} {u}_{k} \right\|}^{2}\,,\quad {\alpha}_{k} = \alpha\left( {t}_{k} \right)~\text{and}~{\beta}_{k} = \beta\left( {t}_{k} \right)\,.
    \end{gather*}
    Let us denote
    \begin{equation*}
        {\mu}_{k} = {\left\| \frac{\Delta{u}_{k - 1}}{\tau} \right\|}^{2}\,,\quad {\gamma}_{k} = {\left\| {\L}^{\nicefrac{1}{2}}{u}_{k} \right\|}^{2}\,,
    \end{equation*}
    and
    \begin{equation*}
        {\tilde q}_{k} = {\alpha}_{k} + {\beta}_{k}{\gamma}_{k}\,.
    \end{equation*}
    Using these notations, the equality \eqref{eq:lemma1_inner_product_eqt} should be written in that way
    \begin{equation*}
        {\mu}_{k + 1} + \frac{1}{2}\left( {\alpha}_{k} + {\beta}_{k}{\gamma}_{k} \right){\gamma}_{k + 1} = {\mu}_{k} + \frac{1}{2}\left( {\alpha}_{k} + {\beta}_{k}{\gamma}_{k} \right){\gamma}_{k - 1} + \left( {f}_{k},\Delta{u}_{k} \right) + \left( {f}_{k},\Delta{u}_{k - 1} \right)\,.
    \end{equation*}
    By utilizing the Cauchy-Schwarz inequality on the right-hand side of the aforementioned equation, it can be deduced that
    \begin{equation*}
        {\mu}_{k + 1} + \frac{1}{2}\left( {\alpha}_{k} + {\beta}_{k}{\gamma}_{k} \right){\gamma}_{k + 1} \leq {\mu}_{k} + \frac{1}{2}\left( {\alpha}_{k} + {\beta}_{k}{\gamma}_{k} \right){\gamma}_{k - 1} + {\tau}\left( \sqrt{{\mu}_{k + 1}} + \sqrt{{\mu}_{k}} \right)\left\| {f}_{k} \right\|\,.
    \end{equation*}
    The following notations are introduced:
    \begin{align*}
        {\lambda}_{k} &= {\mu}_{k} + \frac{1}{2}\left( {\alpha}_{k - 1} + {\beta}_{k - 1}{\gamma}_{k - 1} \right){\gamma}_{k}\,,\\
        {\varepsilon}_{k} &= \left( {\xi}_{k} - {\xi}_{k + 1} \right) + {\eta}_{k} + {\tau}\left( \sqrt{{\mu}_{k}} + \sqrt{{\mu}_{k + 1}} \right)\left\| {f}_{k} \right\|\,,\\
        {\xi}_{k} &= \frac{1}{2}{\alpha}_{k}{\gamma}_{k - 1}\,,\\
        {\eta}_{k} &= \frac{1}{2}\left( {\alpha}_{k + 1} - {\alpha}_{k - 1} \right){\gamma}_{k} + \frac{1}{2}\left( {\beta}_{k} - {\beta}_{k - 1} \right){\gamma}_{k - 1}{\gamma}_{k}\,.
    \end{align*}
    We shall rewrite the inequality mentioned above by using the introduced notations, indeed we derive
    \begin{equation}\label{eq:lemma1_main_inequality}
        {\lambda}_{k + 1} \leq {\lambda}_{k} + {\varepsilon}_{k}\,.
    \end{equation}
    
    It is assumed that ${\alpha}\left( t \right)$ and ${\beta}\left( t \right)$ are continuous and continuously differentiable functions over the interval $t \in \left[ 0,T \right]$, with ${\alpha}\left( t \right) \geq {\c}_{0} > 0$ and ${\beta}\left( t \right) \geq {\c}_{1} > 0$. Based on these conditions, the following estimations hold:
    \begin{equation}\label{eq:lemma1_alpha_ineqt}
        \left| {\alpha}_{k + 1} - {\alpha}_{k - 1} \right| \leq \int\limits_{{t}_{k - 1}}^{{t}_{k + 1}}{\left| {\alpha}^{\prime}\left( t \right) \right|}{\d{t}} \leq 2{{\c}_{2}}{\tau}\,,\quad {\c}_{2} = \max\limits_{0 \leq t \leq T}{\left| {\alpha}^{\prime}\left( t \right) \right|}\,,
    \end{equation}
    analogously,
    \begin{equation}\label{eq:lemma1_beta_ineqt}
        \left| {\beta}_{k} - {\beta}_{k - 1} \right| \leq {{\c}_{3}}{\tau}\,,\quad {\c}_{3} = \max\limits_{0 \leq t \leq T}{\left| {\beta}^{\prime}\left( t \right) \right|}\,.
    \end{equation}
    By considering \eqref{eq:lemma1_alpha_ineqt} and \eqref{eq:lemma1_beta_ineqt}, it is possible to reach the following conclusion
    \begin{align}\label{eq:lemma1_eta_ineqt}
        \left| {\eta}_{k} \right| &\leq \frac{1}{2}\left| {\alpha}_{k + 1} - {\alpha}_{k - 1} \right|{\gamma}_{k} + \frac{1}{2}\left| {\beta}_{k} - {\beta}_{k - 1} \right|{\gamma}_{k - 1}{\gamma}_{k}\nonumber \\
        &\leq {\tau}\left( {\c}_{2} + \frac{1}{2}{\c}_{3}{\gamma}_{k - 1} \right){\gamma}_{k} \leq \max\left( {\c}_{2},\frac{1}{2}{\c}_{3} \right){\tau}\left( 1 + {\gamma}_{k - 1} \right){\gamma}_{k}\nonumber \\
        &= \max\left( {\c}_{2},\frac{1}{2}{\c}_{3} \right){\tau}\left( \frac{{\alpha}_{k - 1}}{{\alpha}_{k - 1}} + \frac{{\beta}_{k - 1}}{{\beta}_{k - 1}}{\gamma}_{k - 1} \right){\gamma}_{k}\nonumber \\
        &\leq \max\left( {\c}_{2},\frac{1}{2}{\c}_{3} \right){\tau}\left( \frac{{\alpha}_{k - 1}}{{\c}_{0}} + \frac{{\beta}_{k - 1}}{{\c}_{1}}{\gamma}_{k - 1} \right){\gamma}_{k}\nonumber \\
        &\leq {2}\max\left( {\c}_{2},\frac{1}{2}{\c}_{3} \right)\max\left( \frac{1}{{\c}_{0}},\frac{1}{{\c}_{1}} \right){\tau}\left[ \frac{1}{2}\left( {\alpha}_{k - 1} + {\beta}_{k - 1}{\gamma}_{k - 1} \right){\gamma}_{k} \right]\nonumber \\
        &= {\c}_{4}{\tau} \left[ \frac{1}{2}\left( {\alpha}_{k - 1} + {\beta}_{k - 1}{\gamma}_{k - 1} \right){\gamma}_{k} \right] \leq {\c}_{4}{\tau}\left[ {\mu}_{k} + \frac{1}{2}\left( {\alpha}_{k - 1} + {\beta}_{k - 1}{\gamma}_{k - 1} \right){\gamma}_{k} \right] = {\c}_{4}{\tau}{\lambda}_{k}\,,
    \end{align}
    where
    \begin{equation*}
        {\c}_{4} = {2}\max\left( {\c}_{2},\frac{1}{2}{\c}_{3} \right)\max\left( \frac{1}{{\c}_{0}},\frac{1}{{\c}_{1}} \right)\,.
    \end{equation*}
    In accordance with equation \eqref{eq:lemma1_eta_ineqt}, it can be deduced that ${\varepsilon}_{k} \leq {\tilde \varepsilon}_{k}$, where
    \begin{equation*}
        {\tilde \varepsilon}_{k} = \left( {\xi}_{k} - {\xi}_{k + 1} \right) + {\c}_{4}{\tau}{\lambda}_{k} + {\tau}\left( \sqrt{{\mu}_{k}} + \sqrt{{\mu}_{k + 1}} \right)\left\| {f}_{k} \right\|\,.
    \end{equation*}
    As a result, we arrive at the following inequality
    \begin{equation}\label{eq:lemma1_lambda_varepsilon_inqt}
        {\lambda}_{k + 1} \leq {\lambda}_{k} + {\varepsilon}_{k} \leq {\lambda}_{k} + {\tilde \varepsilon}_{k}\,.
    \end{equation}
    Through the application of a telescoping series cancellation technique on inequality \eqref{eq:lemma1_lambda_varepsilon_inqt}, it follows that
    \begin{align*}
        {\lambda}_{k + 1} \leq {\lambda}_{1} + \sum\limits_{i = 1}^{k}{\tilde \varepsilon}_{i} &= {\lambda}_{1} + \sum\limits_{i = 1}^{k}\left( {\xi}_{i} - {\xi}_{i + 1} \right) + {\c}_{4}{\tau}\sum\limits_{i = 1}^{k}{\lambda}_{i} + {\tau}\sum\limits_{i = 1}^{k}\left( \sqrt{{\mu}_{i}} + \sqrt{{\mu}_{i + 1}} \right)\left\| {f}_{i} \right\| \\
        &= {\lambda}_{1} + \left( {\xi}_{1} - {\xi}_{k + 1} \right) + {\c}_{4}{\tau}\sum\limits_{i = 1}^{k}{\lambda}_{i} + {\tau}\sum\limits_{i = 1}^{k}\left( \sqrt{{\mu}_{i}} + \sqrt{{\mu}_{i + 1}} \right)\left\| {f}_{i} \right\|\,.
    \end{align*}
    Subsequently, by rearranging the terms, it is obtained that
    \begin{equation*}
        {\lambda}_{k + 1} + {\xi}_{k + 1} \leq {\lambda}_{1} + {\xi}_{1} + {\c}_{4}{\tau}\sum\limits_{i = 1}^{k}{\lambda}_{i} + {\tau}\sum\limits_{i = 1}^{k}\left( \sqrt{{\mu}_{i}} + \sqrt{{\mu}_{i + 1}} \right)\left\| {f}_{i} \right\|\,.
    \end{equation*}
    Let us denote ${\delta}_{k} = \sqrt{{\lambda}_{k} + {\xi}_{k}}$, it is clear that this yields
    \begin{equation*}
        {\delta}_{k + 1}^{2} \leq {\delta}_{1}^{2} + {\c}_{4}{\tau}\sum\limits_{i = 1}^{k}{\delta}_{i}^{2} + {\tau}\sum\limits_{i = 1}^{k}\left( {\delta}_{i} + {\delta}_{i + 1} \right)\left\| {f}_{i} \right\|\,.
    \end{equation*}
    Assuming that ${\delta}_{j} = \max\limits_{1 \leq i \leq k + 1}{\delta}_{i}$. it follows (as demonstrated in \cite{RogavaTsiklauri_EvolEqt2014}) that
    \begin{align*}
        {\delta}_{j}^{2} &\leq {\delta}_{1}^{2} + {\c}_{4}{\tau}\sum\limits_{i = 1}^{j - 1}{\delta}_{i}^{2} + {\tau}\sum\limits_{i = 1}^{j - 1}\left( {\delta}_{i} + {\delta}_{i + 1} \right)\left\| {f}_{i} \right\|\,.
    \end{align*}
    Dividing both sides of the above inequality by ${\delta}_{j}$ gives us
    \begin{align*}
        {\delta}_{j} &\leq \frac{{\delta}_{1}}{{\delta}_{j}}{\delta}_{1} + {\c}_{4}{\tau}\sum\limits_{i = 1}^{j - 1}\frac{{\delta}_{i}}{{\delta}_{j}}{\delta}_{i} + {\tau}\sum\limits_{i = 1}^{j - 1}\left( \frac{{\delta}_{i}}{{\delta}_{j}} + \frac{{\delta}_{i + 1}}{{\delta}_{j}} \right)\left\| {f}_{i} \right\| \\
        &\leq {\delta}_{1} + {\c}_{4}{\tau}\sum\limits_{i = 1}^{j - 1}{\delta}_{i} + {2}{\tau}\sum\limits_{i = 1}^{j - 1}\left\| {f}_{i} \right\| \\
        &\leq {\delta}_{1} + {\c}_{4}{\tau}\sum\limits_{i = 1}^{k}{\delta}_{i} + {2}{\tau}\sum\limits_{i = 1}^{k}\left\| {f}_{i} \right\|\,.
    \end{align*}
    Due to the fact that ${\delta}_{k + 1} \leq {\delta}_{j}$, it can be concluded that
    \begin{equation*}
        {\delta}_{k + 1} \leq {\delta}_{1} + {\c}_{4}{\tau}\sum\limits_{i = 1}^{k}{\delta}_{i} + {2}{\tau}\sum\limits_{i = 1}^{k}\left\| {f}_{i} \right\|\,.
    \end{equation*}
    Thus, on account of the application of \hyperref[lemma:gronwall-inequality]{\bf Lemma \ref*{lemma:gronwall-inequality} (Discrete Gr\"{o}nwall-type inequality)} together with \hyperref[rmk:remark1]{\bf Remark \ref*{rmk:remark1}}, one can establish that
    \begin{equation}\label{eq:lemma1_gronwall_inequality}
        {\delta}_{k + 1} \leq {\e}^{{\c}_{4}{t}_{k}}\left( {\delta}_{1} + {2}{\tau}\sum\limits_{i = 1}^{k}{\left\| {f}_{i} \right\|} \right)\,.
    \end{equation}
    Considering the estimate
    \begin{equation*}
        \sum\limits_{i = 1}^{k}\left\| {f}_{i} \right\| \leq {k}\max\limits_{1 \leq i \leq k}{\left\| {f}_{i} \right\|}\,,
    \end{equation*}
    we observe that the inequality \eqref{eq:lemma1_gronwall_inequality} should be expressed as
    \begin{equation*}
        {\delta}_{k + 1} \leq {\e}^{{\c}_{4}{t}_{k}}\left( {\delta}_{1} + {2}{t}_{k}\max\limits_{1 \leq i \leq k}{\left\| {f}_{i} \right\|} \right)\,.
    \end{equation*}
    It then follows that ${\mu}_{k}$ and ${\gamma}_{k}$ are uniformly bounded.
\end{proof}
\begin{lemma}\label{lemma:lemma2}
    The sequences of functions ${\L}^{\nicefrac{1}{2}}\left( {u}_{k}\left( x \right) - {u}_{k - 1}\left( x \right) \right) / {\tau}$ and ${\L}{u}_{k}\left( x \right)$ are locally uniformly bounded with respect to the ${L}_{2}$-norm, {\ie}, there exists $\overline{T} > 0$ such that
    \begin{equation*}
        \left\| {\L}^{\nicefrac{1}{2}}\frac{{u}_{k} - {u}_{k - 1}}{\tau} \right\| \leq {\M}_{3}\,,\quad \left\| {\L}{u}_{k} \right\| \leq {\M}_{4}\,,\quad {k} = 1, 2, \ldots, \left[ \frac{\overline{T}}{\tau} \right]\,.
    \end{equation*}
    Here, ${\M}_{3}$ and ${\M}_{4}$ are positive constants that depend on the value of $\overline{T}$.
\end{lemma}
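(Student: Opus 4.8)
The plan is to carry out the energy argument of the proof of Lemma~\ref{lemma:lemma1} one order of differentiation higher; the only genuinely new feature is that the resulting recursion is mildly superlinear, so that it closes on a sufficiently short interval $\left[ 0,\overline{T} \right]$ rather than on all of $\left[ 0,T \right]$, which is exactly why the estimate is only local.

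First I would take the $L_2$-inner product of the scheme \eqref{eq:semidiscrete_scheme_extension_operator_form} with $\L\left( u_{k+1} - u_{k-1} \right) = \L\left( \Delta u_k + \Delta u_{k-1} \right)$ --- legitimate once the data $\psi_0,\psi_1,f$ are regular and compatible enough that $u_k \in D\left( \L \right)$ and $f\left( {\cdot},t \right)\in D\left( \L^{\nicefrac{1}{2}} \right)$. Integrating by parts, the three terms become, respectively, ${\hat\mu}_{k+1} - {\hat\mu}_k$ with ${\hat\mu}_k = \left\| \L^{\nicefrac{1}{2}}\Delta u_{k-1}/\tau \right\|^2$; $\tfrac12{\tilde q}_k\left( {\hat\gamma}_{k+1} - {\hat\gamma}_{k-1} \right)$ with ${\hat\gamma}_k = \left\| \L u_k \right\|^2$; and $\left( \L^{\nicefrac{1}{2}}f_k, \L^{\nicefrac{1}{2}}\left( \Delta u_k + \Delta u_{k-1} \right) \right)$, which Cauchy--Schwarz bounds by $\tau\left\| \L^{\nicefrac{1}{2}}f_k \right\|\left( \sqrt{{\hat\mu}_{k+1}} + \sqrt{{\hat\mu}_k} \right)$. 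Introducing the higher-order energy ${\hat\Lambda}_k = {\hat\mu}_k + \tfrac12{\tilde q}_{k-1}\left( {\hat\gamma}_k + {\hat\gamma}_{k-1} \right)$ --- the counterpart of the quantity ${\delta}_k^2 = {\lambda}_k + {\xi}_k$ from the proof of Lemma~\ref{lemma:lemma1}, designed to absorb the index shift between ${\tilde q}_k$ and ${\tilde q}_{k-1}$ --- the identity rearranges into the one-step inequality
\[
    {\hat\Lambda}_{k+1} \le {\hat\Lambda}_k + \tfrac12\left( {\tilde q}_k - {\tilde q}_{k-1} \right)\left( {\hat\gamma}_k + {\hat\gamma}_{k-1} \right) + \tau\left\| \L^{\nicefrac{1}{2}}f_k \right\|\left( \sqrt{{\hat\mu}_{k+1}} + \sqrt{{\hat\mu}_k} \right),\quad k = 1,2,\ldots,n-1.
\]

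Next I would estimate the commutator-type term using only the first-order bounds already supplied by Lemma~\ref{lemma:lemma1}. Splitting ${\tilde q}_k - {\tilde q}_{k-1} = \left( \alpha_k - \alpha_{k-1} \right) + \beta_k\left( \gamma_k - \gamma_{k-1} \right) + \left( \beta_k - \beta_{k-1} \right)\gamma_{k-1}$ with $\gamma_k = \left\| \L^{\nicefrac{1}{2}}u_k \right\|^2$, and combining \eqref{eq:lemma1_alpha_ineqt}--\eqref{eq:lemma1_beta_ineqt}, the bound $\gamma_k \le {\M}_2^2$, and $\left| \gamma_k - \gamma_{k-1} \right| = \left| \left( \L^{\nicefrac{1}{2}}\Delta u_{k-1}, \L^{\nicefrac{1}{2}}\left( u_k + u_{k-1} \right) \right) \right| \le 2{\M}_2\tau\sqrt{{\hat\mu}_k}$, one gets $\left| {\tilde q}_k - {\tilde q}_{k-1} \right| \le {\c}_5\tau + {\c}_6\tau\sqrt{{\hat\mu}_k}$. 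Since ${\tilde q}_{k-1}\ge{\c}_0$ forces ${\hat\gamma}_k + {\hat\gamma}_{k-1} \le \left( 2/{\c}_0 \right){\hat\Lambda}_k$, and ${\hat\mu}_k \le {\hat\Lambda}_k$, the commutator term is $\le \left( {\c}_7\tau + {\c}_8\tau\sqrt{{\hat\Lambda}_k} \right){\hat\Lambda}_k$; the extra factor $\sqrt{{\hat\Lambda}_k}$ --- which has no analogue in the term $\eta_k$ of Lemma~\ref{lemma:lemma1} --- is precisely what makes the recursion superlinear. Setting $D_k = \sqrt{{\hat\Lambda}_k}$, writing ${\hat\Lambda}_{k+1} - {\hat\Lambda}_k = \left( D_{k+1} - D_k \right)\left( D_{k+1} + D_k \right)$ and dividing the inequality through by $D_{k+1} + D_k$ (which simultaneously neutralizes the $\sqrt{{\hat\mu}_{k+1}}$ from the forcing, since $\bigl( \sqrt{{\hat\Lambda}_{k+1}} + \sqrt{{\hat\Lambda}_k} \bigr)/\left( D_{k+1} + D_k \right) = 1$ and ${\hat\mu}_{k+1}\le{\hat\Lambda}_{k+1}$), then using $D_k^2/\left( D_{k+1} + D_k \right)\le D_k$ and an arithmetic--geometric mean inequality on the surviving linear term, I expect to arrive at
\[
    D_{k+1} \le D_k\left( 1 + {\c}_9\tau D_k \right) + \tau\left( {\c}_{10} + \left\| \L^{\nicefrac{1}{2}}f_k \right\| \right),\quad k = 1,2,\ldots,n-1,
\]
with $D_1$ bounded independently of $\tau$ by the regularity of the initial data via \eqref{eq:semidscrete_scheme_first_layer}.

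Finally, after rescaling $D_k$ by a fixed constant so that the quadratic coefficient equals $1$, I would apply Lemma~\ref{lemma:rogava-tsiklauri} with $s = 1$ and $c_k$ a constant multiple of ${\c}_{10} + \left\| \L^{\nicefrac{1}{2}}f_k \right\|$ (bounded uniformly in $k$), which gives a bound on $D_k$ valid for $t_k$ below the threshold it provides; choosing $\overline{T}$ to be a fixed fraction of that threshold then yields a uniform bound on $D_k$, hence on ${\hat\mu}_k = \left\| \L^{\nicefrac{1}{2}}\left( u_k - u_{k-1} \right)/\tau \right\|^2$ and ${\hat\gamma}_k = \left\| \L u_k \right\|^2$, for all $k \le \left[ \overline{T}/\tau \right]$, and the two asserted inequalities follow with ${\M}_3,{\M}_4$ depending on $\overline{T}$. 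I expect the main obstacle to be exactly this superlinear term ${\c}_8\tau\sqrt{{\hat\Lambda}_k}\,{\hat\Lambda}_k$: the difference $\gamma_k - \gamma_{k-1}$ of the already-bounded low-order energy is genuinely only $\bigO\bigl( \tau\sqrt{{\hat\mu}_k} \bigr)$, not $\bigO\left( \tau \right)$ times a controlled quantity, so --- in contrast with Lemma~\ref{lemma:lemma1}, where $\eta_k$ was linear in $\lambda_k$ and the discrete Gr\"{o}nwall inequality closed on all of $\left[ 0,T \right]$ --- one is forced to use the blow-up-type estimate of Lemma~\ref{lemma:rogava-tsiklauri}; a secondary technical point is keeping the occurrence of ${\hat\Lambda}_{k+1}$ on the right-hand side harmless, which the division by $D_{k+1} + D_k$ achieves.
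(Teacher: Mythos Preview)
Your argument is correct and follows the same overall strategy as the paper: take the inner product of \eqref{eq:semidiscrete_scheme_extension_operator_form} with $\L\left( u_{k+1}-u_{k-1} \right)$, form a higher-order energy, control the increment ${\tilde q}_k-{\tilde q}_{k-1}$ via Lemma~\ref{lemma:lemma1} (picking up the decisive factor $\tau\sqrt{{\hat\mu}_k}$ that makes the recursion superlinear), and then close with Lemma~\ref{lemma:rogava-tsiklauri} on a short interval. The one substantive difference is how you dispose of the implicit term $\sqrt{{\hat\mu}_{k+1}}$ coming from the forcing. The paper works with the squared energy ${\tilde\lambda}_k$ throughout: it first normalizes by $\left( 1+{\c}_{11}\tau \right)^{k}$, then solves a quadratic inequality in $y_{k+1}=\sqrt{{\tilde\xi}_{k+1}}$ to eliminate the implicit $\sqrt{{\tilde\lambda}_{k+1}}$, normalizes once more by $\left( 1+{\c}_{14}\tau \right)^{k}$, and only then reaches a clean recursion $\zeta_{k+1}\le\zeta_k\left( 1+\overline{\tau}\sqrt{\zeta_k} \right)+{\c}_{17}\overline{\tau}$ to which Lemma~\ref{lemma:rogava-tsiklauri} applies with $s=\tfrac12$. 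Your passage to $D_k=\sqrt{{\hat\Lambda}_k}$ and division by $D_{k+1}+D_k$ handles both issues at once --- the ratio $\bigl( \sqrt{{\hat\mu}_{k+1}}+\sqrt{{\hat\mu}_k} \bigr)/\left( D_{k+1}+D_k \right)\le 1$ kills the implicit term, and the linear-in-$D_k$ piece is absorbed by a single AM--GM step --- landing directly on Lemma~\ref{lemma:rogava-tsiklauri} with $s=1$. This is a genuine streamlining of the paper's route, with no loss of generality or sharpness.
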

\begin{proof}
    Let us consider taking the inner product of both sides of the equation \eqref{eq:semidiscrete_scheme_extension_operator_form} with ${\L}\left(  {u}_{k + 1} - {u}_{k - 1} \right) = {\L}\left( \Delta{u}_{k} \right) + {\L}\left( \Delta{u}_{k - 1} \right)$. Upon integrating by parts, we obtain the following identity:
    \begin{align}\label{eq:lemma2_inner_product_eqt}
        {\left\| \frac{1}{\tau}{\L}^{\nicefrac{1}{2}}\left( \Delta{u}_{k} \right) \right\|}^{2} + \frac{1}{2}{{\tilde q}_{k}}{\left\| {\L}{u}_{k + 1} \right\|}^{2} &= {\left\| \frac{1}{\tau}{\L}^{\nicefrac{1}{2}}\left( \Delta{u}_{k - 1} \right) \right\|}^{2} + \frac{1}{2}{{\tilde q}_{k}}{\left\| {\L}{u}_{k - 1} \right\|}^{2}\nonumber \\
        &+ \left( {\L}^{\nicefrac{1}{2}}{f}_{k},{\L}^{\nicefrac{1}{2}}\left( \Delta{u}_{k} \right) \right) + \left( {\L}^{\nicefrac{1}{2}}{f}_{k},{\L}^{\nicefrac{1}{2}}\left( \Delta{u}_{k - 1} \right) \right)\,,
    \end{align}
    here, we assume that ${f}_{k} \in D\left( {\L}^{\nicefrac{1}{2}} \right)$.

    \noindent By applying the Cauchy-Schwarz inequality, we can derive
    \begin{align}\label{eq:lemma2_f_k_inequality}
        &\left| \left( {\L}^{\nicefrac{1}{2}}{f}_{k},{\L}^{\nicefrac{1}{2}}\left( \Delta{u}_{k} \right) \right) + \left( {\L}^{\nicefrac{1}{2}}{f}_{k},{\L}^{\nicefrac{1}{2}}\left( \Delta{u}_{k - 1} \right) \right) \right| \leq\nonumber \\
        &{\tau}\left\| {\L}^{\nicefrac{1}{2}}{f}_{k} \right\|\left( \left\| \frac{1}{\tau}{\L}^{\nicefrac{1}{2}}\left( \Delta{u}_{k} \right) \right\| + \left\| \frac{1}{\tau}{\L}^{\nicefrac{1}{2}}\left( \Delta{u}_{k - 1} \right) \right\| \right)\,.
    \end{align}
    Let us introduce the denotations
    \begin{equation*}
        {\tilde \mu}_{k} = {\left\| \frac{1}{\tau}{\L}^{\nicefrac{1}{2}}\left( \Delta{u}_{k - 1} \right) \right\|}^{2}\,,\quad {\nu}_{k} = {\left\| {\L}{u}_{k} \right\|}^{2}\,,\quad {\gamma}_{k} = {\left\| {\L}^{\nicefrac{1}{2}}{u}_{k} \right\|}^{2}\,,\quad {\sigma}_{k} = \left\| {\L}^{\nicefrac{1}{2}}{f}_{k} \right\|\,.
    \end{equation*}
    Substituting the notations introduced in the previous step and using the inequality \eqref{eq:lemma2_f_k_inequality}, we can rewrite the equality \eqref{eq:lemma2_inner_product_eqt} as follows
    \begin{equation*}
        {\tilde \mu}_{k + 1} + \frac{1}{2}\left( {\alpha}_{k} + {\beta}_{k}{\gamma}_{k} \right){\nu}_{k + 1} \leq {\tilde \mu}_{k} + \frac{1}{2}\left( {\alpha}_{k} + {\beta}_{k}{\gamma}_{k} \right){\nu}_{k - 1} + {\tau}{\sigma}_{k}\left( \sqrt{{\tilde \mu}_{k}} + \sqrt{{\tilde \mu}_{k + 1}} \right)\,.
    \end{equation*}
    Continuing from the previous step, using the notation $\displaystyle {\tilde \nu}_{k} = \frac{1}{2}\left( {\nu}_{k - 1} + {\nu}_{k} \right)$, we have
    \begin{equation*}
        {\tilde \mu}_{k + 1} + \left( {\alpha}_{k} + {\beta}_{k}{\gamma}_{k} \right){\tilde \nu}_{k + 1} \leq {\tilde \mu}_{k} + \left( {\alpha}_{k} + {\beta}_{k}{\gamma}_{k} \right){\tilde \nu}_{k} + {\tau}{\sigma}_{k}\left( \sqrt{{\tilde \mu}_{k}} + \sqrt{{\tilde \mu}_{k + 1}} \right)\,.
    \end{equation*}
    Expanding the inequality in the previous step, we obtain
    \begin{align}\label{eq:lemma2_lambda_inequality}
        {\tilde \mu}_{k + 1} + \left( {\alpha}_{k} + {\beta}_{k}{\gamma}_{k} \right){\tilde \nu}_{k + 1} &\leq {\tilde \mu}_{k} + \left( {\alpha}_{k - 1} + {\beta}_{k - 1}{\gamma}_{k - 1} \right){\tilde \nu}_{k}\nonumber\\
        &+ \left( {\alpha}_{k} - {\alpha}_{k - 1} \right){\tilde \nu}_{k} + \left( {\beta}_{k}{\gamma}_{k} - {\beta}_{k - 1}{\gamma}_{k - 1} \right){\tilde \nu}_{k} + {\tau}{\sigma}_{k}\left( \sqrt{{\tilde \mu}_{k}} + \sqrt{{\tilde \mu}_{k + 1}} \right)\,.
    \end{align}
    Evaluating the absolute value of the following difference ${\gamma}_{k} - {\gamma}_{k - 1}$ by virtue of the \hyperref[lemma:lemma1]{\bf Lemma \ref*{lemma:lemma1}}
    \begin{equation}\label{eq:lemma2_difference_of_gamma}
        \left| {\gamma}_{k} - {\gamma}_{k - 1} \right| \leq \left( \sqrt{{\gamma}_{k - 1}} + \sqrt{{\gamma}_{k}} \right){\tau}\sqrt{{\tilde \mu}_{k}} \leq {2}{\M}_{2}{\tau}\sqrt{{\tilde \mu}_{k}}\,.
    \end{equation}
    Similar to the estimation presented in the inequality \eqref{eq:lemma1_alpha_ineqt}, we can derive the following
    \begin{equation}\label{eq:lemma2_alpha_ineqt}
        \left| {\alpha}_{k} - {\alpha}_{k - 1} \right| \leq {\c}_{2}{\tau}\,.
    \end{equation}
    By applying the inequalities \eqref{eq:lemma1_beta_ineqt} and \eqref{eq:lemma2_difference_of_gamma} along with \hyperref[lemma:lemma1]{\bf Lemma \ref*{lemma:lemma1}}, one can obtain the following estimates:
    \begin{align}\label{eq:lemma2_beta_gamma}
        \left| {\beta}_{k}{\gamma}_{k} - {\beta}_{k - 1}{\gamma}_{k - 1} \right| &\leq \left| {\beta}_{k} - {\beta}_{k - 1} \right|{\gamma}_{k} + {\beta}_{k - 1}\left| {\gamma}_{k} - {\gamma}_{k - 1} \right|\nonumber \\
        &\leq {\c}_{3}{\tau}{\gamma}_{k} + {2}{\beta}_{k - 1}{\M}_{2}{\tau}\sqrt{{\tilde \mu}_{k}}\nonumber \\
        &\leq {\c}_{3}{\M}_{2}^{2}{\tau} + {2}{\beta}_{k - 1}{\M}_{2}{\tau}\sqrt{{\tilde \mu}_{k}}\nonumber \\
        &\leq \max\left( {\c}_{3}{\M}_{2}^{2},{2}{\beta}_{k - 1}{\M}_{2} \right){\tau}\left( 1 + \sqrt{{\tilde \mu}_{k}} \right)\nonumber \\
        &={\c}_{5}{\tau}\left( 1 + \sqrt{{\tilde \mu}_{k}} \right)\,.
    \end{align}
    Through the utilization of inequalities \eqref{eq:lemma2_alpha_ineqt} and \eqref{eq:lemma2_beta_gamma}, we are able to reformulate inequality \eqref{eq:lemma2_lambda_inequality} in the following manner
    \begin{align*}
        {\tilde \mu}_{k + 1} + \left( {\alpha}_{k} + {\beta}_{k}{\gamma}_{k} \right){\tilde \nu}_{k + 1} &\leq {\tilde \mu}_{k} + \left( {\alpha}_{k - 1} + {\beta}_{k - 1}{\gamma}_{k - 1} \right){\tilde \nu}_{k} \\
        &+ {\c}_{2}{\tau}{\tilde \nu}_{k} + {\c}_{5}{\tau}\left( 1 + \sqrt{{\tilde \mu}_{k}} \right){\tilde \nu}_{k} + {\tau}{\sigma}_{k}\left( \sqrt{{\tilde \mu}_{k}} + \sqrt{{\tilde \mu}_{k + 1}} \right) \\
        &= {\tilde \mu}_{k} + \left( {\alpha}_{k - 1} + {\beta}_{k - 1}{\gamma}_{k - 1} \right){\tilde \nu}_{k} + {\c}_{6}{\tau}{\tilde \nu}_{k} \\
        &+ {\c}_{5}{\tau}\sqrt{{\tilde \mu}_{k}}{\tilde \nu}_{k} + {\tau}{\sigma}_{k}\left( \sqrt{{\tilde \mu}_{k}} + \sqrt{{\tilde \mu}_{k + 1}} \right)\,,\quad {\c}_{6} = {\c}_{2} + {\c}_{5}\,.
    \end{align*}
    If we introduce the notation given by $\displaystyle {\tilde \lambda}_{k} = {\tilde \mu}_{k} + \left( {\alpha}_{k - 1} + {\beta}_{k - 1}{\gamma}_{k - 1} \right){\tilde \nu}_{k}$, we shall obtain
    \begin{equation}\label{eq:lemma2_tilde_lambda_eqt}
        {\tilde \lambda}_{k + 1} \leq {\tilde \lambda}_{k} + {\c}_{6}{\tau}{\tilde \nu}_{k} + {\c}_{5}{\tau}\sqrt{{\tilde \mu}_{k}}{\tilde \nu}_{k} + {\tau}{\sigma}_{k}\left( \sqrt{{\tilde \mu}_{k}} + \sqrt{{\tilde \mu}_{k + 1}} \right)\,.
    \end{equation}
    Let us now consider the following set of inequalities:
    \begin{gather*}
        {\tilde \mu}_{k} \leq {\tilde \lambda}_{k}\,,\quad {\tilde \nu}_{k} \leq \frac{1}{{\c}_{0}}{\tilde \lambda}_{k} = {\c}_{7}{\tilde \lambda}_{k}\,,\quad {\sigma}_{k} \leq {\c}_{8} = \max\limits_{0 \leq t \leq {T}}\left\| {\L}^{\nicefrac{1}{2}} f\left( \cdot,{t} \right) \right\|\,, \\
        \sqrt{{\tilde \mu}_{k}} \leq \frac{1}{2}\left( 1 + {\tilde \mu}_{k} \right) \leq \frac{1}{2}\left( 1 + {\tilde \lambda}_{k} \right)\,,
    \end{gather*}
    By utilizing equation \eqref{eq:lemma2_tilde_lambda_eqt}, we are able to draw the conclusion that
    \begin{align*}
        {\tilde \lambda}_{k + 1} &\leq {\tilde \lambda}_{k} + {\c}_{9}{\tau}{\tilde \lambda}_{k} + {\c}_{10}{\tau}\sqrt{{\tilde \lambda}_{k}}{\tilde \lambda}_{k} + {\c}_{8}{\tau}\sqrt{{\tilde \lambda}_{k + 1}} + \frac{1}{2}{\c}_{8}{\tau}\left( 1 + {\tilde \lambda}_{k} \right) \\
        &=\left( 1 + {\c}_{11}{\tau} + {\c}_{10}{\tau}\sqrt{{\tilde \lambda}_{k}} \right){\tilde \lambda}_{k} + {\c}_{8}{\tau}\sqrt{{\tilde \lambda}_{k + 1}} + {\c}_{12}{\tau}\,, \\
        & {\c}_{9} = {\c}_{6}{\c}_{7}\,,\quad {\c}_{10} = {\c}_{5}{\c}_{7}\,,\quad {\c}_{11} = {\c}_{9} + {\c}_{12}\,,\quad {\c}_{12} = \frac{1}{2}{\c}_{8}\,.
    \end{align*}
    Hence, we are able to conclude that
    \begin{equation*}
        {\tilde \lambda}_{k + 1} \leq {\tilde \lambda}_{k}\left( 1 + {\c}_{11}{\tau} \right)\left( 1 + \frac{{\c}_{10}}{1 + {\c}_{11}{\tau}}{\tau}\sqrt{{\tilde \lambda}_{k}} \right) + {\c}_{8}{\tau}\sqrt{{\tilde \lambda}_{k + 1}} + {\c}_{12}{\tau}
    \end{equation*}
    Let us divide both sides of the aforementioned inequality by ${\left( 1 + {\c}_{11}{\tau} \right)}^{k + 1}$; as a result, we obtain
    \begin{align*}
        \frac{{\tilde \lambda}_{k + 1}}{{\left( 1 + {\c}_{11}{\tau} \right)}^{k + 1}} &\leq \frac{{\tilde \lambda}_{k}}{{\left( 1 + {\c}_{11}{\tau} \right)}^{k}}\left( 1 + \frac{{\c}_{10}}{1 + {\c}_{11}{\tau}}{\tau}\sqrt{{\tilde \lambda}_{k}} \right) + \frac{{\c}_{8}\sqrt{{\tilde \lambda}_{k + 1}}}{{\left( 1 + {\c}_{11}{\tau} \right)}^{k + 1}}{\tau} + \frac{{\c}_{12}}{{\left( 1 + {\c}_{11}{\tau} \right)}^{k + 1}}{\tau} \\
        &= \frac{{\tilde \lambda}_{k}}{{\left( 1 + {\c}_{11}{\tau} \right)}^{k}}\left( 1 + \frac{{\c}_{10}\sqrt{{\left( 1 + {\c}_{11}{\tau} \right)}^{k}}}{1 + {\c}_{11}{\tau}}{\tau}\sqrt{\frac{{\tilde \lambda}_{k}}{{\left( 1 + {\c}_{11}{\tau} \right)}^{k}}} \right) \\
        &+\frac{{\c}_{8}}{\sqrt{{\left( 1 + {\c}_{11}{\tau} \right)}^{k + 1}}}{\tau}\sqrt{\frac{{\tilde \lambda}_{k + 1}}{{\left( 1 + {\c}_{11}{\tau} \right)}^{k + 1}}} + \frac{{\c}_{12}}{{\left( 1 + {\c}_{11}{\tau} \right)}^{k + 1}}{\tau}\,.
    \end{align*}
    By utilizing the notation $\displaystyle {\tilde \xi}_{k} = \frac{{\tilde \lambda}_{k}}{{\left( 1 + {\c}_{11}{\tau} \right)}^{k}}$, and considering the following set of plain inequalities
    \begin{gather*}
        \frac{{\c}_{10}\sqrt{{\left( 1 + {\c}_{11}{\tau} \right)}^{k}}}{1 + {\c}_{11}{\tau}} \leq {\c}_{10}\sqrt{{\e}^{{\c}_{11}{T}}} = {\c}_{13}\,, \\
        \frac{{\c}_{8}}{\sqrt{{\left( 1 + {\c}_{11}{\tau} \right)}^{k + 1}}} \leq {\c}_{8}\,,\quad\frac{{\c}_{12}}{{\left( 1 + {\c}_{11}{\tau} \right)}^{k + 1}} \leq {\c}_{12}\,,
    \end{gather*}
    As a result of the aforementioned inequalities, we are able to determine that
    \begin{equation}\label{eq:lemma2_tilde_xi_inqt}
        {\tilde \xi}_{k + 1} \leq {\tilde \xi}_{k}\left( 1 + {\c}_{13}{\tau}\sqrt{{\tilde \xi}_{k}} \right) + {\c}_{8}{\tau}\sqrt{{\tilde \xi}_{k + 1}} + {\c}_{12}{\tau}\,.
    \end{equation}
    In order to simplify inequality \eqref{eq:lemma2_tilde_xi_inqt} and express it in a more straightforward manner, let us introduce new denotations
    \begin{equation*}
        {y}_{k + 1} = \sqrt{{\tilde \xi}_{k + 1}},\quad {w}_{k} = {\tilde \xi}_{k}\left( 1 + {\c}_{13}{\tau}\sqrt{{\tilde \xi}_{k}} \right) + {\c}_{12}{\tau}\,,
    \end{equation*}
    correspondingly, we derive the following quadratic inequality for ${y}_{k + 1}$,
    \begin{equation*}
        {y}_{k + 1}^{2} - {\c}_{8}{\tau}{y}_{k + 1} - {w}_{k} \leq 0\,.
    \end{equation*}
    It should be noted that the discriminant of this quadratic polynomial is given by $\displaystyle {\c}_{8}^{2}{\tau}^{2} + {4}{w}_{k}$, which is non-negative. Therefore, we can write the following inequality
    \begin{equation*}
        {y}_{k + 1} \leq \frac{{\c}_{8}}{2}{\tau} + \sqrt{\frac{{\c}_{8}^{2}}{4}{\tau}^{2} + {w}_{k}}\,.
    \end{equation*}
    By squaring both sides of the aforementioned inequality, we can readily draw a conclusion
    \begin{equation}\label{eq:lemma2_quad_poly_y_k+1}
        {y}_{k + 1}^{2} \leq \frac{{\c}_{8}^{2}}{2}{\tau}^{2} + {\c}_{8}{\tau}\sqrt{\frac{{\c}_{8}^{2}}{4}{\tau}^{2} + {w}_{k}} + {w}_{k}\,.
    \end{equation}
    Let us evaluate the second term in the right-hand side of inequality \eqref{eq:lemma2_quad_poly_y_k+1}. We have
    \begin{align*}
        {\c}_{8}{\tau}\sqrt{\frac{{\c}_{8}^{2}}{4}{\tau}^{2} + {w}_{k}} &= \sqrt{{\c}_{8}{\tau}}\sqrt{\frac{{\c}_{8}^{3}}{4}{\tau}^{3} + {\c}_{8}{w}_{k}{\tau}} \\
        &\leq \frac{1}{2}\left( {\c}_{8}{\tau} + \frac{{\c}_{8}^{3}}{4}{\tau}^{3} + {\c}_{8}{w}_{k}{\tau} \right)\,.
    \end{align*}
    Taking the last inequality into account in \eqref{eq:lemma2_quad_poly_y_k+1}, we eventually conclude that
    \begin{align}\label{eq:lemma2_y_k+1_ineqt}
        {y}_{k + 1}^{2} &\leq \frac{{\c}_{8}}{2}{\tau} + \frac{{\c}_{8}^{2}}{2}{\tau}^{2} + \frac{{\c}_{8}^{3}}{8}{\tau}^{3} + {w}_{k} + \frac{{\c}_{8}}{2}{w}_{k}{\tau}\nonumber \\
        &= \frac{{\c}_{8}}{2}{\tau}\left( 1 + {\c}_{8}{\tau} + \frac{{\c}_{8}^{2}}{4}{\tau}^{2} \right) + \left( 1 + \frac{{\c}_{8}}{2}{\tau} \right){w}_{k}\,.
    \end{align}
    Upon reversing the substitution of $\displaystyle {y}_{k + 1} = \sqrt{{\tilde \xi}_{k + 1}}$ and $\displaystyle {w}_{k} = {\tilde \xi}_{k}\left( 1 + {\c}_{13}{\tau}\sqrt{{\tilde \xi}_{k}} \right) + {\c}_{12}{\tau}$ in inequality \eqref{eq:lemma2_y_k+1_ineqt}, we obtain the following expression
    \begin{align*}
        {\tilde \xi}_{k + 1} \leq \left( 1 + \frac{{\c}_{8}}{2}{\tau} \right){\tilde \xi}_{k}\left( 1 + {\c}_{13}{\tau}\sqrt{{\tilde \xi}_{k}} \right) &+ \left( 1 + \frac{{\c}_{8}}{2}{\tau} \right){\c}_{12}{\tau} \\
        &+ \frac{{\c}_{8}}{2}{\tau}\left( 1 + {\c}_{8}{\tau} + \frac{{\c}_{8}^{2}}{4}{\tau}^{2} \right)\,.
    \end{align*}
    By reintroducing the notations $\displaystyle {\c}_{14} = \frac{{\c}_{8}}{2}$ and $\displaystyle {\c}_{15} = \left( 1 + {\c}_{14}{\tau} \right){\c}_{12} + {\c}_{14}\left( 1 + {2}{\c}_{14}{\tau} + {\c}_{14}^{2}{\tau}^{2} \right)$, we can conclude that
    \begin{equation*}
        {\tilde \xi}_{k + 1} \leq \left( 1 + {\c}_{14}{\tau} \right){\tilde \xi}_{k}\left( 1 + {\c}_{13}{\tau}\sqrt{{\tilde \xi}_{k}} \right) + {\c}_{15}{\tau}\,.
    \end{equation*}
    Upon dividing both sides of the aforementioned inequality by ${\left( 1 + {\c}_{14}{\tau} \right)}^{k + 1}$, we obtain the following expression
    \begin{align}\label{eq:lemma2_frac_xi_k_inequality}
        \frac{{\tilde \xi}_{k + 1}}{{\left( 1 + {\c}_{14}{\tau} \right)}^{k + 1}} &\leq \frac{{\tilde \xi}_{k}}{{\left( 1 + {\c}_{14}{\tau} \right)}^{k}}\left( 1 + {\c}_{13}{\tau}\sqrt{{\tilde \xi}_{k}} \right) + \frac{{\c}_{15}}{{\left( 1 + {\c}_{14}{\tau} \right)}^{k + 1}}{\tau}\nonumber \\
        &= \frac{{\tilde \xi}_{k}}{{\left( 1 + {\c}_{14}{\tau} \right)}^{k}}\left( 1 + {\c}_{13}\sqrt{{\left( 1 + {\c}_{14}{\tau} \right)}^{k}}{\tau}\sqrt{\frac{{\tilde \xi}_{k}}{{\left( 1 + {\c}_{14}{\tau} \right)}^{k}}}\,\, \right) + \frac{{\c}_{15}}{{\left( 1 + {\c}_{14}{\tau} \right)}^{k + 1}}{\tau}\,.
    \end{align}
    The following estimates are applicable
    \begin{gather*}
        {\c}_{13}\sqrt{{\left( 1 + {\c}_{14}{\tau} \right)}^{k}} \leq {\c}_{13}\sqrt{{\e}^{{\c}_{14}{T}}} = {\c}_{16}\,,\quad \frac{{\c}_{15}}{{\left( 1 + {\c}_{14}{\tau} \right)}^{k + 1}} \leq {\c}_{15}\,.
    \end{gather*}
    By introducing the notation $\displaystyle {\zeta}_{k} = \frac{{\tilde \xi}_{k}}{{\left( 1 + {\c}_{14}{\tau} \right)}^{k}}$ and applying it along with the aforementioned inequalities in \eqref{eq:lemma2_frac_xi_k_inequality}, we can deduce the following conclusion
    \begin{equation*}
        {\zeta}_{k + 1} \leq {\zeta}_{k}\left( 1 + {\c}_{16}{\tau}\sqrt{{\zeta}_{k}} \right) + {\c}_{15}{\tau}\,.
    \end{equation*}
    Let us make a certain transformation, specifically by introducing $\displaystyle {\overline{\tau}} = {\c}_{16}{\tau}$, we have
    \begin{equation*}
        {\zeta}_{k + 1} \leq {\zeta}_{k}\left( 1 + {\overline{\tau}}\sqrt{{\zeta}_{k}} \right) + {\c}_{17}{\overline{\tau}}\,,\quad {\c}_{17} = \frac{{\c}_{15}}{{\c}_{16}}\,.
    \end{equation*}
    As a result of \hyperref[lemma:rogava-tsiklauri]{\bf Lemma \ref*{lemma:rogava-tsiklauri}}, it can be deduced that
    \begin{equation}\label{eq:lemma2_result_of_lemma_rogavatsiklauri}
        {\zeta}_{k} \leq \frac{\zeta}{{\left( 1 - \dfrac{\left( 1 + {\c}_{17} \right)\sqrt{\zeta}}{2}{\overline{t}}_{k} \right)}^{2}} \leq \frac{{\tilde \lambda}}{{\left( 1 - \dfrac{{\c}_{16}\left( 1 + {\c}_{17} \right)\sqrt{{\tilde \lambda}}}{2}{{t}_{k}} \right)}^{2}}\,,\quad {k} = 1, 2, \ldots, {m}\,,
    \end{equation}
    where
    \begin{equation*}
        {\zeta} = \max\left( 1,{\zeta}_{1} \right) \leq \max\left( 1,{\tilde \lambda}_{1} \right) = {\tilde \lambda}\,,\quad {\overline{t}}_{k} = {k}{\overline{\tau}} = {\c}_{16}{{t}_{k}} < \frac{2}{\left( 1 + {\c}_{17} \right)\sqrt{{\tilde \lambda}}} \leq \frac{2}{\left( 1 + {\c}_{17} \right)\sqrt{{\zeta}}}\,.
    \end{equation*}
    According to the introduced denotation, we have
    \begin{equation}\label{eq:lemma2_conn_btwn_zeta_k_lambda_k_inqt}
        {\zeta}_{k} = \dfrac{{\tilde \lambda}_{k}}{{\left( 1 + {\c}_{11}{\tau} \right)}^{k}{\left( 1 + {\c}_{14}{\tau} \right)}^{k}} \geq \dfrac{{\tilde \lambda}_{k}}{{\e}^{\left( {\c}_{11} + {\c}_{14} \right){t}_{k}}}\,.
    \end{equation}
    The inequality \eqref{eq:lemma2_result_of_lemma_rogavatsiklauri} with the help of \eqref{eq:lemma2_conn_btwn_zeta_k_lambda_k_inqt} gives us the following estimation
    \begin{equation}\label{eq:lemma2_result_of_lemma_rogavatsiklauri_tilde_lambda}
        {\tilde \lambda}_{k} \leq \frac{{\tilde \lambda}}{{\left( 1 - {\c}_{18}\sqrt{{\tilde \lambda}}{{t}_{k}} \right)}^{2}}{\e}^{{\c}_{19}{t}_{k}}\,,\quad {k} = 1, 2, \ldots, {m}\,,
    \end{equation}
    
    It is worth mentioning that the value of ${m}$ in inequality \eqref{eq:lemma2_result_of_lemma_rogavatsiklauri_tilde_lambda} depends on both the coefficient of ${t}_{k}$ (which appears in the denominator of the fraction) and the number of time interval divisions, ${n}$. The coefficient of ${t}_{k}$ can be explicitly estimated using data from the problem \eqref{eq:main_eqt}-\eqref{eq:boundary_conds}, taking into account the value of $T$ as well. Furthermore, the inequality ${\tilde \lambda} \leq {\widetilde{\M}}$ holds, where ${\widetilde{\M}}$ is a positive constant that depends on $\left\| {\L}^{2}{\psi}_{0} \right\|$, $\left\| {\L}{\psi}_{1} \right\|$, $\left\| {\L}{f}_{0} \right\|$, and the value of $T$.
    
    From \eqref{eq:lemma2_result_of_lemma_rogavatsiklauri_tilde_lambda}, the following inequality follows:
    \begin{equation}\label{eq:lemma2_final_result_loc_bound}
        {\tilde \lambda}_{k} \leq \dfrac{\widetilde{\M}}{{\left( 1 - \overline{\M}{\,}\overline{T} \right)}^{2}}{\e}^{{\c}_{19}\overline{T}}\,,\quad {k} = 1, 2, \ldots, \left[ \frac{\overline{T}}{{\tau}} \right]\,,
    \end{equation}
    where $\overline{\M} = {\c}_{18}\sqrt{\widetilde{\M}}$, $\overline{T} = \dfrac{q}{\overline{\M}}$, ${0} < {q} < {1}$.

    The inequality given in \eqref{eq:lemma2_final_result_loc_bound} implies that the sequences of functions ${\L}^{\nicefrac{1}{2}}\left( {u}_{k}\left( x \right) - {u}_{k - 1}\left( x \right) \right) / {\tau}$ and ${\L}{u}_{k}\left( x \right)$ are uniformly bounded over the local interval $\left[ {0}, \overline{T} \right]$.
\end{proof}
\begin{remark}\label{rmk:remark-lemma1}
    It is evident that when the sequences of functions ${u}_{k}\left( x \right) \in D\left( {\L}_{0} \right)$, from the second estimate of \hyperref[lemma:lemma1]{\bf Lemma \ref*{lemma:lemma1}} follows that the norm of the derivative of ${u}_{k}$ with respect to $x$ is bounded above by a constant ${\M}_{2}$, that is
    \begin{equation*}
        \left\| \frac{\d {u}_{k}}{\d x} \right\| \leq {\M}_{2}\,.
    \end{equation*}
\end{remark}
\subsection{Error Estimate of the Approximate Solution}\label{subsec:error_estimate}
It should be noted that throughout the text, the index $t$ is occasionally omitted in expressing the derivative of the function ${u}\left( x,t \right)$ with respect to the temporal variable $t$. Specifically, the notation ${u}^{\prime}\left( x,t \right)$ is used to represent the first-order derivative of the function ${u}\left( x,t \right)$ with respect to its second argument, rather than ${u}_{{t}}^{\prime}\left( x,t \right)$ or ${u}_{{t}}\left( x,t \right)$. When mixed partial derivatives of the function ${u}\left( x,t \right)$ with respect to the spatial and the temporal variables are encountered, lower indices ${x}$ and ${t}$ are used to denote the appropriate partial derivatives. For instance, ${u}_{{x}{x}{t}}\left( x,t \right)$ represents the second-order and the first-order partial derivatives of the function ${u}\left( x,t \right)$ with respect to the spatial and the temporal variables, respectively.

Before presenting the theorem on the convergence of the scheme \eqref{eq:semidiscrete_scheme}, we first provide a remark on the smoothness of the solution to problem \eqref{eq:main_eqt}-\eqref{eq:boundary_conds} to ascertain the order of convergence of the proposed symmetric three-layer semi-discrete scheme \eqref{eq:semidiscrete_scheme}. To ensure the well-posedness of the problem, a minimum degree of smoothness of the solution is required, which guarantees convergence but is insufficient to determine the order of convergence. By raising the smoothness of the solution by one degree, the order of convergence becomes equal to one (specifically, in both the previous and current scenarios, where ${u}_{1}\left( x \right) = {\psi}_{0}\left( x \right) + {\tau}{\psi}_{1}\left( x \right)$ is sufficient). Furthermore, if the smoothness degree is risen by two and the initial function is specified using formula \eqref{eq:semidscrete_scheme_first_layer}, the order of convergence is improved by an additional degree, resulting in a total order of two. However, further increasing the smoothness degree would be superfluous, as the approximation order of scheme \eqref{eq:semidiscrete_scheme} does not exceed two.

The subsequent theorem is established regarding the convergence of the scheme \eqref{eq:semidiscrete_scheme}.
\begin{theorem}\label{theorem:theorem1}
    Let the problem \eqref{eq:main_eqt}-\eqref{eq:boundary_conds} be well-posed. Besides, the following conditions are fulfilled:
    \begin{enumerate}[label=(\alph*)]
        \item\label{itm_theorem_a} ${\psi}_{0}\left( x \right) \in D\left( {\L}_{0} \right)$, ${\psi}_{1}\left( x \right) \in {C}^{1}\left( \left[ {0},{\ell} \right] \right)$ and the function ${u}_{{x}}\left( x,t \right)$ has a second-order continuous derivative with respect to the temporal variable.
        \item\label{itm_theorem_b} The solution ${u}\left( x,t \right)$ of the problem \eqref{eq:main_eqt}-\eqref{eq:boundary_conds} is a continuously differentiable function up to and including the third order with respect to the temporal variable, moreover, ${u}^{{\prime}{\prime}{\prime}}\left( x,t \right)$ is a Lipschitz continuous function with respect to the temporal variable.
        \item\label{itm_theorem_c} The function ${u}_{{x}{x}}\left( x,t \right)$ is continuously differentiable with respect to the temporal variable, as well ${u}_{{x}{x}{t}}\left( x,t \right)$ satisfies the Lipschitz condition with respect to the temporal variable.
    \end{enumerate}
    Then there exists $\overline{T}$ $\left( 0 < \overline{T} \leq {T} \right)$ such that for the error of an approximate solution denoted by ${z}_{k}\left( x \right)$ and defined as follows ${z}_{k}\left( x \right) = {u}\left( x,{t}_{k} \right) - {u}_{k}\left( x \right)$ the following estimates hold:
    \begin{equation*}
        \max\limits_{1 \leq k \leq m}\left\| \frac{{\d}{z}_{k}}{{\d}{x}} \right\| \leq {\M}_{5}{\tau}^{2}\,,\quad \max\limits_{0 \leq k \leq m - 1}\left\| \frac{{z}_{k + 1} - {z}_{k}}{\tau} \right\| \leq {\M}_{6}{\tau}^{2}\,,
    \end{equation*}
    where $\displaystyle {m} = \left[ \frac{\overline{T}}{{\tau}} \right]$.
\end{theorem}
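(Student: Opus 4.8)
The plan is to derive an error equation for ${z}_{k}(x) = u(x,{t}_{k}) - {u}_{k}(x)$ and then to run an energy argument entirely parallel to the proof of \hyperref[lemma:lemma1]{\bf Lemma \ref*{lemma:lemma1}}, but now tracking the smallness of the data. Subtracting the scheme \eqref{eq:semidiscrete_scheme_extension_operator_form} from the exact identity \eqref{eq:discrete_main_eqt} written through $\L$ (legitimate since $u(\cdot,t)\in D({\L}_{0})$), and decomposing the nonlinear difference as
\begin{align*}
    q({t}_{k})\L\big( u(\cdot,{t}_{k+1}) + u(\cdot,{t}_{k-1}) \big) - {q}_{k}\L\big( {u}_{k+1} + {u}_{k-1} \big) &= {q}_{k}\L\big( {z}_{k+1} + {z}_{k-1} \big) \\
    &\quad + \big( q({t}_{k}) - {q}_{k} \big)\L\big( u(\cdot,{t}_{k+1}) + u(\cdot,{t}_{k-1}) \big)\,,
\end{align*}
one arrives at
\begin{equation*}
    \frac{{\Delta}^{2}{z}_{k-1}}{{\tau}^{2}} + \frac{1}{2}{q}_{k}\big( \L{z}_{k+1} + \L{z}_{k-1} \big) = {R}_{1,k} + {R}_{2,k} - \frac{1}{2}\big( q({t}_{k}) - {q}_{k} \big)\L\big( u(\cdot,{t}_{k+1}) + u(\cdot,{t}_{k-1}) \big)\,.
\end{equation*}
First I would check that $\| {R}_{1,k} \| + \| {R}_{2,k} \| \leq {\c}\,{\tau}^{2}$: a third-order Taylor expansion in $t$ gives ${R}_{1,k} = \tfrac{{\tau}}{6}\big( {u}^{\prime\prime\prime}(\cdot,{\eta}_{+}) - {u}^{\prime\prime\prime}(\cdot,{\eta}_{-}) \big)$, which is $\bigO({\tau}^{2})$ by the Lipschitz continuity of ${u}^{\prime\prime\prime}$ in $t$ from condition \ref{itm_theorem_b}; and writing $\frac{\d^{2}}{\d {x}^{2}}{\Delta}^{2}u(\cdot,{t}_{k-1}) = \int_{{t}_{k}}^{{t}_{k+1}}\big( {u}_{xxt}(\cdot,s) - {u}_{xxt}(\cdot,s-{\tau}) \big)\,\d s$ and invoking the Lipschitz continuity of ${u}_{xxt}$ from condition \ref{itm_theorem_c} together with the boundedness of $q({t}_{k})$ gives $\| {R}_{2,k} \| \leq {\c}\,{\tau}^{2}$.

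Next I would take the ${L}_{2}$ inner product of the error equation with ${z}_{k+1} - {z}_{k-1} = {\Delta}{z}_{k} + {\Delta}{z}_{k-1}$ and integrate by parts (justified since ${z}_{k}\in D({\L}_{0})$), which produces
\begin{equation*}
    {\Big\| \frac{{\Delta}{z}_{k}}{{\tau}} \Big\|}^{2} + \frac{1}{2}{q}_{k}{\big\| {\L}^{\nicefrac{1}{2}}{z}_{k+1} \big\|}^{2} = {\Big\| \frac{{\Delta}{z}_{k-1}}{{\tau}} \Big\|}^{2} + \frac{1}{2}{q}_{k}{\big\| {\L}^{\nicefrac{1}{2}}{z}_{k-1} \big\|}^{2} + \big( \text{RHS},\, {\Delta}{z}_{k} + {\Delta}{z}_{k-1} \big)\,.
\end{equation*}
Setting ${\lambda}_{k} = \| {\Delta}{z}_{k-1}/{\tau} \|^{2} + \tfrac{1}{2}{q}_{k-1}\| {\L}^{\nicefrac{1}{2}}{z}_{k} \|^{2}$, ${\xi}_{k} = \tfrac{1}{2}{q}_{k}\| {\L}^{\nicefrac{1}{2}}{z}_{k-1} \|^{2}$ and ${\delta}_{k} = \sqrt{{\lambda}_{k}+{\xi}_{k}}$, and using the telescoping device from the proof of \hyperref[lemma:lemma1]{\bf Lemma \ref*{lemma:lemma1}} --- here the only coefficient is the scalar ${q}_{k}$, which satisfies ${\c}_{0} \leq {q}_{k} \leq {\c}$ by \hyperref[lemma:lemma1]{\bf Lemma \ref*{lemma:lemma1}} and the hypotheses on $\alpha,\beta$ --- one is led to
\begin{equation*}
    {\lambda}_{k+1} + {\xi}_{k+1} \leq {\lambda}_{k} + {\xi}_{k} + \frac{1}{2}\big( {q}_{k+1} - {q}_{k-1} \big){\big\| {\L}^{\nicefrac{1}{2}}{z}_{k} \big\|}^{2} + \big( \text{RHS},\, {\Delta}{z}_{k} + {\Delta}{z}_{k-1} \big)\,.
\end{equation*}
The crucial observation here is that $| {q}_{k+1} - {q}_{k-1} | \leq {\c}\,{\tau}$ \emph{on the local interval}: this follows from $| {\alpha}_{k+1} - {\alpha}_{k-1} |, | {\beta}_{k+1} - {\beta}_{k-1} | \leq {\c}\,{\tau}$, the global bound $\| {\L}^{\nicefrac{1}{2}}{u}_{j} \| \leq {\M}_{2}$ of \hyperref[lemma:lemma1]{\bf Lemma \ref*{lemma:lemma1}}, and the \emph{local} bound $\| {\L}^{\nicefrac{1}{2}}{\Delta}{u}_{j}/{\tau} \| \leq {\M}_{3}$ of \hyperref[lemma:lemma2]{\bf Lemma \ref*{lemma:lemma2}}; this is exactly where the passage to $[0,\overline{T}]$ becomes necessary.

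For the right-hand side, one has $| ({R}_{1,k} + {R}_{2,k},\, {\Delta}{z}_{k} + {\Delta}{z}_{k-1}) | \leq {\c}\,{\tau}^{3}\big( {\delta}_{k+1} + {\delta}_{k} \big)$, while --- and this is the heart of the matter --- the nonlinear perturbation must be kept \emph{linear} in $\| {\L}^{\nicefrac{1}{2}}{z}_{k} \|$: using the Cauchy--Schwarz inequality, the smoothness of $u$ and \hyperref[lemma:lemma1]{\bf Lemma \ref*{lemma:lemma1}},
\begin{equation*}
    | q({t}_{k}) - {q}_{k} | = {\beta}_{k}\,\big| {\| {\L}^{\nicefrac{1}{2}}u(\cdot,{t}_{k}) \|}^{2} - {\| {\L}^{\nicefrac{1}{2}}{u}_{k} \|}^{2} \big| \leq {\c}\,\big\| {\L}^{\nicefrac{1}{2}}{z}_{k} \big\| \leq {\c}\,{\delta}_{k}\,,
\end{equation*}
while $\| \L\big( u(\cdot,{t}_{k+1}) + u(\cdot,{t}_{k-1}) \big) \| \leq {\c}$ by the boundedness of ${u}_{xx}$, so the nonlinear term in the right-hand side is at most ${\c}\,{\tau}\,{\delta}_{k}\big( {\delta}_{k+1} + {\delta}_{k} \big)$. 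Combining everything, and using $\| {\L}^{\nicefrac{1}{2}}{z}_{k} \|^{2} \leq {\c}\,{\lambda}_{k} \leq {\c}\,{\delta}_{k}^{2}$, yields
\begin{equation*}
    {\delta}_{k+1}^{2} \leq {\delta}_{k}^{2} + {\c}\,{\tau}{\delta}_{k}^{2} + {\c}\,{\tau}^{3}\big( {\delta}_{k+1} + {\delta}_{k} \big) + {\c}\,{\tau}{\delta}_{k}\big( {\delta}_{k+1} + {\delta}_{k} \big)\,.
\end{equation*}
Telescoping from $1$ to $k$, then passing to ${\delta}_{j} = \max_{1\leq i\leq k+1}{\delta}_{i}$ and dividing through by ${\delta}_{j}$ (so that the ${\delta}_{i+1}$-terms drop out, exactly as in the proof of \hyperref[lemma:lemma1]{\bf Lemma \ref*{lemma:lemma1}}) gives ${\delta}_{k+1} \leq {\delta}_{1} + {\c}\,{\tau}\sum_{i=1}^{k}{\delta}_{i} + {\c}\,{t}_{k}{\tau}^{2}$, whence \hyperref[lemma:gronwall-inequality]{\bf Lemma \ref*{lemma:gronwall-inequality}} together with \hyperref[rmk:remark1]{\bf Remark \ref*{rmk:remark1}} yields ${\delta}_{k+1} \leq {\e}^{{\c}\,\overline{T}}\big( {\delta}_{1} + {\c}\,\overline{T}{\tau}^{2} \big)$ for all $k \leq m-1$.

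It remains to estimate ${\delta}_{1}$. Since ${z}_{0} = 0$ one has ${\xi}_{1} = 0$ and ${\delta}_{1}^{2} = \| {z}_{1}/{\tau} \|^{2} + \tfrac{1}{2}{q}_{0}\| {\L}^{\nicefrac{1}{2}}{z}_{1} \|^{2}$; expanding $u(\cdot,{t}_{1})$ in $t$ and using ${u}_{tt}(\cdot,0) = {q}_{0}\frac{\d^{2}{\psi}_{0}}{\d {x}^{2}} + {f}_{0} = {\psi}_{2}$ (which is \eqref{eq:main_eqt} at $t=0$), the choice \eqref{eq:semidscrete_scheme_first_layer} of ${u}_{1}$ makes ${z}_{1} = \tfrac{{\tau}^{3}}{6}{u}^{\prime\prime\prime}(\cdot,{\eta})$, so $\| {z}_{1} \| \leq {\c}\,{\tau}^{3}$; and a second-order expansion of ${u}_{x}(\cdot,{t}_{1})$, using ${u}_{xt}(\cdot,0) = {\psi}_{1}^{\prime}$ and ${u}_{xtt}(\cdot,0) = {\psi}_{2}^{\prime}$ (condition \ref{itm_theorem_a}), gives $\| {\L}^{\nicefrac{1}{2}}{z}_{1} \| = \| \d {z}_{1}/\d x \| = \bigO({\tau}^{2})$. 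Hence ${\delta}_{1} = \bigO({\tau}^{2})$ and therefore ${\delta}_{k+1} \leq {\M}\,{\tau}^{2}$ for $k = 0,1,\ldots,m-1$. The two asserted estimates then follow by reading off $\| ({z}_{k+1} - {z}_{k})/{\tau} \| = \| {\Delta}{z}_{k}/{\tau} \| \leq \sqrt{{\lambda}_{k+1}} \leq {\delta}_{k+1}$ and $\| \d {z}_{k}/\d x \| = \| {\L}^{\nicefrac{1}{2}}{z}_{k} \| \leq \sqrt{2{q}_{k-1}^{-1}}\,\sqrt{{\lambda}_{k}} \leq {\c}\,{\delta}_{k}$. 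I expect the main obstacle to be the control of the nonlinear perturbation term: one must keep $q({t}_{k}) - {q}_{k}$ linear in $\| {\L}^{\nicefrac{1}{2}}{z}_{k} \|$ so that it feeds the discrete Gr\"{o}nwall inequality rather than corrupting its structure, while at the same time keeping $\L u(\cdot,{t}_{k\pm 1})$ --- which carries two spatial derivatives of the exact solution --- under control; it is precisely the interplay between the global bounds of \hyperref[lemma:lemma1]{\bf Lemma \ref*{lemma:lemma1}} and the local bounds of \hyperref[lemma:lemma2]{\bf Lemma \ref*{lemma:lemma2}} that confines the whole estimate to a sub-interval $[0,\overline{T}]$.
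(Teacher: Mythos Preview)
Your proposal is correct and follows essentially the same route as the paper: derive the error equation with right-hand side ${R}_{1,k}+{R}_{2,k}$ plus the nonlinear perturbation $\tfrac12(q({t}_{k})-{q}_{k})\L\big(u(\cdot,{t}_{k+1})+u(\cdot,{t}_{k-1})\big)$, bound the remainders by $\bigO({\tau}^{2})$ via the Lipschitz hypotheses \ref{itm_theorem_b}--\ref{itm_theorem_c}, run the energy identity obtained by pairing with ${z}_{k+1}-{z}_{k-1}$, absorb the coefficient drift $q_{k+1}-q_{k-1}=\bigO({\tau})$ using \hyperref[lemma:lemma1]{\bf Lemma \ref*{lemma:lemma1}} and \hyperref[lemma:lemma2]{\bf Lemma \ref*{lemma:lemma2}} (which is exactly where the restriction to $[0,\overline{T}]$ enters), keep $q({t}_{k})-{q}_{k}$ linear in $\|{\L}^{1/2}{z}_{k}\|$, and close with the discrete Gr\"{o}nwall inequality after checking ${\delta}_{1}=\bigO({\tau}^{2})$ from \eqref{eq:semidscrete_scheme_first_layer}. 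The only cosmetic difference is that the paper works directly with the squared energy $\overline{\lambda}_{k}$ and uses Young's inequality before Gr\"{o}nwall, whereas you pass to ${\delta}_{k}=\sqrt{{\lambda}_{k}+{\xi}_{k}}$ and use the ``divide by the maximum'' trick from the proof of \hyperref[lemma:lemma1]{\bf Lemma \ref*{lemma:lemma1}}; both lead to the same bound.
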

\begin{proof}
    If we evaluate the exact representation of equation \eqref{eq:main_eqt} at ${t} = {t}_{k}$, which is expressed as equality \eqref{eq:discrete_main_eqt}, and subtract it from equation \eqref{eq:semidiscrete_scheme}, evidently for ${z}_{k}\left( x \right)$ we obtain
    \begin{equation}\label{eq:theorem_semidiscrete_scheme_for_z_k}
        \frac{{\Delta}^{2}{z}_{k - 1}\left( x \right)}{{\tau}^{2}} - \frac{1}{2}{q}_{k}\left( \frac{{\d}^{2}{z}_{k + 1}\left( x \right)}{{\d}{x}^{2}} + \frac{{\d}^{2}{z}_{k - 1}\left( x \right)}{{\d}{x}^{2}} \right) = {g}_{k}\left( x \right)\,,\quad {k} = 1, 2, \ldots, {n - 1}\,,
    \end{equation}
    here
    \begin{align*}
        {\Delta}{z}_{k}\left( x \right) &= {z}_{k + 1}\left( x \right) - {z}_{k}\left( x \right)\,, \\
        {g}_{k}\left( x \right) &= \frac{1}{2}\left( {q}\left( {t}_{k} \right) - {q}_{k} \right)\left( \frac{{\d}^{2}{u}\left( x,{t}_{k + 1} \right)}{{\d}{x}^{2}} + \frac{{\d}^{2}{u}\left( x,{t}_{k - 1} \right)}{{\d}{x}^{2}} \right) + {R}_{k}\left( x,{\tau} \right)\,, \\
        {R}_{k}\left( x,{\tau} \right) &= {R}_{1,k}\left( x,\tau \right) + {R}_{2,k}\left( x,\tau \right) = \underbrace{\left( \frac{{\Delta}^{2}u\left( x,{t}_{k - 1} \right)}{{\tau}^{2}} - \frac{{\partial}^{2}u\left( x,{t}_{k} \right)}{\partial{t}^{2}} \right)}_{{R}_{1,k}\left( x,\tau \right)} + \underbrace{\left( - \frac{1}{2}{q}\left( {t}_{k} \right)\frac{\d ^{2}}{\d {x}^{2}}{\Delta}^{2}u\left( x,{t}_{k - 1} \right) \right)}_{{R}_{2,k}\left( x,\tau \right)}\,.
    \end{align*}
    We shall proceed to estimate the remainder term ${R}_{k}\left( x,{\tau} \right)$ in equation \eqref{eq:theorem_semidiscrete_scheme_for_z_k}. To do so, we consider the Taylor expansion of the function ${u}\left( x,t \right)$ around the point $t = {t}_{k}$. Specifically, we have
    \begin{equation}\label{eq:theorem_taylor_series_four_terms}
        {u}\left( x,t \right) = {u}\left( x,{t}_{k} \right) + {\left( t - {t}_{k} \right)}{u}^{{\prime}}\left( x,{t}_{k} \right) + \frac{{\left( t - {t}_{k} \right)}^{2}}{2}{u}^{{\prime}{\prime}}\left( x,{t}_{k} \right) + \frac{{\left( t - {t}_{k} \right)}^{3}}{6}{u}^{{\prime}{\prime}{\prime}}\left( x,{t}_{k} \right) + {\widetilde R}_{3}\left( x,t \right)\,.
    \end{equation}
    In this context, the remainder term ${\widetilde R}_{3}\left( x,t \right)$, also known as the Lagrange remainder, is given by
    \begin{equation*}
        {\widetilde R}_{3}\left( x,t \right) = \int\limits_{{t}_{k}}^{t}\int\limits_{{t}_{k}}^{{s}_{1}}\int\limits_{{t}_{k}}^{{s}_{2}}{\left( {u}^{{\prime}{\prime}{\prime}}\left( x,{s}_{3} \right) - {u}^{{\prime}{\prime}{\prime}}\left( x,{t}_{k} \right) \right)}{\d {s}_{3}}{\d {s}_{2}}{\d {s}_{1}}\,.
    \end{equation*}
    By virtue of condition \ref{itm_theorem_b} in \hyperref[theorem:theorem1]{\bf Theorem \ref*{theorem:theorem1}}, it can be simply conclude that
    \begin{equation}\label{eq:theorem_est_lagrange_rem_triple_integ}
        \max\limits_{0 \leq x \leq \ell}\left| {\widetilde R}_{3}\left( x,t \right) \right| \leq \frac{{\c}_{20}}{24}{\left( {t} - {t}_{k} \right)}^{4}\,.
    \end{equation}
    Upon substituting $t = {t}_{k - 1}$ and $t = {t}_{k + 1}$ in equality \eqref{eq:theorem_taylor_series_four_terms}, we obtain the following result
    \begin{equation*}
        \frac{{\Delta}^{2}u\left( x,{t}_{k - 1} \right)}{{\tau}^{2}} - \frac{{\partial}^{2}u\left( x,{t}_{k} \right)}{\partial{t}^{2}} = \frac{1}{{\tau}^{2}}\left( {\widetilde R}_{3}\left( x,{t}_{k - 1} \right) + {\widetilde R}_{3}\left( x,{t}_{k + 1} \right) \right)\,,
    \end{equation*}
    and from here by taking into account inequality \eqref{eq:theorem_est_lagrange_rem_triple_integ}, we obtain the following estimate
    \begin{align}\label{eq:theorem_abs_r1k}
        \max\limits_{0 \leq x \leq \ell}\left| {R}_{1,k}\left( x,\tau \right) \right| &= \max\limits_{0 \leq x \leq \ell}\left| \frac{{\Delta}^{2}u\left( x,{t}_{k - 1} \right)}{{\tau}^{2}} - \frac{{\partial}^{2}u\left( x,{t}_{k} \right)}{\partial{t}^{2}} \right| = \frac{1}{{\tau}^{2}}\max\limits_{0 \leq x \leq \ell}\left| {\widetilde R}_{3}\left( x,{t}_{k - 1} \right) + {\widetilde R}_{3}\left( x,{t}_{k + 1} \right) \right|\nonumber \\
        &\leq \frac{1}{{\tau}^{2}}\left( \max\limits_{0 \leq x \leq \ell}\left| {\widetilde R}_{3}\left( x,{t}_{k - 1} \right) \right| + \max\limits_{0 \leq x \leq \ell}\left| {\widetilde R}_{3}\left( x,{t}_{k + 1} \right) \right| \right) \leq \frac{{\c}_{20}}{12}{\tau}^{2}\,.
    \end{align}
    To obtain an estimation for ${R}_{2,k}\left( x,\tau \right)$, it is crucial to consider the Taylor series for the function ${u}\left( x,t \right)$ around the point $t = {t}_{k}$ while retaining the first two terms, {\ie}
    \begin{equation}\label{eq:theorem_taylor_series_two_terms}
        {u}\left( x,t \right) = {u}\left( x,{t}_{k} \right) + {\left( t - {t}_{k} \right)}{u}^{{\prime}}\left( x,{t}_{k} \right) + {\widetilde R}_{1}\left( x,t \right)\,,
    \end{equation}
    the remainder term ${\widetilde R}_{1}\left( x,t \right)$ can be expressed in integral form as follows
    \begin{equation*}
        {\widetilde R}_{1}\left( x,t \right) = \int\limits_{{t}_{k}}^{{t}}{\left( {u}^{{\prime}}\left( x,s \right) - {u}^{{\prime}}\left( x,{t}_{k} \right) \right)}{\d s}\,.
    \end{equation*}
    By virtue of condition \ref{itm_theorem_c} in \hyperref[theorem:theorem1]{\bf Theorem \ref*{theorem:theorem1}} the following inequality can be readily obtained
    \begin{equation}\label{eq:theorem_est_lagrange_rem_single_integ}
        \max\limits_{0 \leq x \leq \ell}\left| \frac{\d ^{2}}{\d {x}^{2}}{\widetilde R}_{1}\left( x,t \right) \right| \leq \frac{{\c}_{21}}{2}{\left( t - {t}_{k} \right)}^{2}\,.
    \end{equation}
    We substitute $t = {t}_{k - 1}$ and $t = {t}_{k + 1}$ in equality \eqref{eq:theorem_taylor_series_two_terms}, sum and arrange them, which yields
    \begin{equation*}
        {\Delta}^{2}u\left( x,{t}_{k - 1} \right) = {\widetilde R}_{1}\left( x,{t}_{k - 1} \right) + {\widetilde R}_{1}\left( x,{t}_{k + 1} \right)\,.
    \end{equation*}
    Through \eqref{eq:theorem_est_lagrange_rem_single_integ}, the last equality can be further estimated as follows
    \begin{align*}
        \max\limits_{0 \leq x \leq \ell}\left| \frac{\d ^{2}}{\d {x}^{2}}{\Delta}^{2}u\left( x,{t}_{k - 1} \right) \right| \leq \max\limits_{0 \leq x \leq \ell}\left| \frac{\d ^{2}}{\d {x}^{2}}{\widetilde R}_{1}\left( x,{t}_{k - 1} \right) \right| + \max\limits_{0 \leq x \leq \ell}\left| \frac{\d ^{2}}{\d {x}^{2}}{\widetilde R}_{1}\left( x,{t}_{k + 1} \right) \right| \leq {\c}_{21}{\tau}^{2}\,,
    \end{align*}
    from here
    \begin{equation}\label{eq:theorem_abs_r2k}
        \max\limits_{0 \leq x \leq \ell}\left| {R}_{2,k}\left( x,\tau \right) \right| = \frac{1}{2}{q}\left( {t}_{k} \right) \max\limits_{0 \leq x \leq \ell}\left| \frac{\d ^{2}}{\d {x}^{2}}{\Delta}^{2}u\left( x,{t}_{k - 1} \right) \right| \leq \frac{1}{2}{q}\left( {t}_{k} \right){\c}_{21}{\tau}^{2}\,.
    \end{equation}
    Thus, by combining \eqref{eq:theorem_abs_r1k} and \eqref{eq:theorem_abs_r2k}, we arrive at the following result
    \begin{align}\label{eq:theorem_main_remainder_term}
        \left\| {R}_{k}\left( x,{\tau} \right) \right\| &\leq \left\| {R}_{1,k}\left( x,{\tau} \right) \right\| + \left\| {R}_{2,k}\left( x,{\tau} \right) \right\| \nonumber \\
        &\leq \frac{\sqrt{\ell}}{2}\left( \frac{{\c}_{20}}{6} + {\max\limits_{0 \leq t \leq {T}}}{q}\left( t \right){\c}_{21} \right){\tau}^{2} = {\c}_{22}{\tau}^{2}\,.
    \end{align}

    Considering the inner product for both sides of the equality \eqref{eq:theorem_semidiscrete_scheme_for_z_k} with ${z}_{k + 1} - {z}_{k - 1} = {\Delta}{z}_{k} + {\Delta}{z}_{k - 1}$, we have
    \begin{equation}\label{eq:theorem_inner_product_z_k_eqt}
        {\left\| \frac{{\Delta}{z}_{k}}{{\tau}} \right\|}^{2} + \frac{1}{2}{q}_{k}{\left\| \frac{{\d}{z}_{k + 1}}{{\d}{x}} \right\|}^{2} = {\left\| \frac{{\Delta}{z}_{k - 1}}{{\tau}} \right\|}^{2} + \frac{1}{2}{q}_{k}{\left\| \frac{{\d}{z}_{k - 1}}{{\d}{x}} \right\|}^{2} + \left( {g}_{k}, {\Delta}{z}_{k} + {\Delta}{z}_{k - 1} \right)\,.
    \end{equation}
    Introducing the denotations
    \begin{gather*}
        {\overline{\mu}}_{k} = \left\| \frac{{\Delta}{z}_{k - 1}}{{\tau}} \right\|\,,\quad {\overline{\vartheta}}_{k} = \left\| \frac{{\d}{z}_{k}}{{\d}{x}} \right\|\,,\quad {\vartheta}_{k} = \left\| \frac{{\d}{u}_{k}}{{\d}{x}} \right\|\,,\quad {\overline{\delta}}_{k} = \left( {g}_{k}, {\Delta}{z}_{k} + {\Delta}{z}_{k - 1} \right)\,, \\
        {q}_{k} = {\alpha}_{k} + {\beta}_{k}{\vartheta}_{k}^{2}\,.
    \end{gather*}
    Thus, the equality expressed in \eqref{eq:theorem_inner_product_z_k_eqt} should be rewritten in the following manner
    \begin{equation*}
        {\overline{\mu}}_{k + 1}^{2} + \frac{1}{2}\left( {\alpha}_{k} + {\beta}_{k}{\vartheta}_{k}^{2} \right){\overline{\vartheta}}_{k + 1}^{2} = {\overline{\mu}}_{k}^{2} + \frac{1}{2}\left( {\alpha}_{k} + {\beta}_{k}{\vartheta}_{k}^{2} \right){\overline{\vartheta}}_{k - 1}^{2} + {\overline{\delta}}_{k}\,.
    \end{equation*}
    Finally, by using the last equality, we obtain
    \begin{equation}\label{eq:theorem_lambda_eqt}
        {\overline{\lambda}}_{k + 1} = {\overline{\lambda}}_{k} + \left( {\overline{\varepsilon}}_{k} + {\overline{\delta}}_{k} \right)\,,
    \end{equation}
    where
    \begin{align*}
        {\overline{\lambda}}_{k} &= {\overline{\mu}}_{k}^{2} + \frac{1}{2}\left( {\alpha}_{k - 1} + {\beta}_{k - 1}{\vartheta}_{k - 1}^{2} \right){\overline{\vartheta}}_{k}^{2}\,, \\
        {\overline{\varepsilon}}_{k} &= \frac{1}{2}\left( {\alpha}_{k} + {\beta}_{k}{\vartheta}_{k}^{2} \right){\overline{\vartheta}}_{k - 1}^{2} - \frac{1}{2}\left( {\alpha}_{k - 1} + {\beta}_{k - 1}{\vartheta}_{k - 1}^{2} \right){\overline{\vartheta}}_{k}^{2} \\
        &= \frac{1}{2}\left( {\alpha}_{k}{\overline{\vartheta}}_{k - 1}^{2} - {\alpha}_{k - 1}{\overline{\vartheta}}_{k}^{2} \right) + \frac{1}{2}\left( {\beta}_{k}{\vartheta}_{k}^{2}{\overline{\vartheta}}_{k - 1}^{2} - {\beta}_{k - 1}{\vartheta}_{k - 1}^{2}{\overline{\vartheta}}_{k}^{2} \right)\,.
    \end{align*}
    From \eqref{eq:theorem_lambda_eqt}, we have
    \begin{align}\label{eq:theorem_lambda_sum_eqt}
        {\overline{\lambda}}_{k + 1} &= {\overline{\lambda}}_{1} + \sum\limits_{i = 1}^{k}{\left( {\overline{\varepsilon}}_{i} + {\overline{\delta}}_{i} \right)}\nonumber \\
        &= {\overline{\lambda}}_{1} + \frac{1}{2}\sum\limits_{i = 1}^{k}{\left( {\alpha}_{i}{\overline{\vartheta}}_{i - 1}^{2} - {\alpha}_{i - 1}{\overline{\vartheta}}_{i}^{2} \right)} + \frac{1}{2}\sum\limits_{i = 1}^{k}{\left( {\beta}_{i}{\vartheta}_{i}^{2}{\overline{\vartheta}}_{i - 1}^{2} - {\beta}_{i - 1}{\vartheta}_{i - 1}^{2}{\overline{\vartheta}}_{i}^{2} \right)} + \sum\limits_{i = 1}^{k}{{\overline{\delta}}_{i}}\,.
    \end{align}
    The following representations are faithful
    \begin{align}
        &\sum\limits_{i = 1}^{k}{\left( {\alpha}_{i}{\overline{\vartheta}}_{i - 1}^{2} - {\alpha}_{i - 1}{\overline{\vartheta}}_{i}^{2} \right)} = \sum\limits_{i = 1}^{k}{\left( {\alpha}_{i}{\overline{\vartheta}}_{i - 1}^{2} - {\alpha}_{i + 1}{\overline{\vartheta}}_{i}^{2} \right)} + \sum\limits_{i = 1}^{k}{\left( {\alpha}_{i + 1} - {\alpha}_{i - 1} \right){\overline{\vartheta}}_{i}^{2}}\nonumber \\
        &= {\alpha}_{1}{\overline{\vartheta}}_{0}^{2} - {\alpha}_{k + 1}{\overline{\vartheta}}_{k}^{2} + \sum\limits_{i = 1}^{k}{\left( {\alpha}_{i + 1} - {\alpha}_{i - 1} \right){\overline{\vartheta}}_{i}^{2}}\,.\label{eq:theorem_sum_alpha_varrho} \\
        &\sum\limits_{i = 1}^{k}{\left( {\beta}_{i}{\vartheta}_{i}^{2}{\overline{\vartheta}}_{i - 1}^{2} - {\beta}_{i - 1}{\vartheta}_{i - 1}^{2}{\overline{\vartheta}}_{i}^{2} \right)} = {\beta}_{1}{\vartheta}_{1}^{2}{\overline{\vartheta}}_{0}^{2} + \sum\limits_{i = 1}^{k - 1}{\left( {\beta}_{i + 1}{\vartheta}_{i + 1}^{2} - {\beta}_{i - 1}{\vartheta}_{i - 1}^{2} \right){\overline{\vartheta}}_{i}^{2}} - {\beta}_{k - 1}{\vartheta}_{k - 1}^{2}{\overline{\vartheta}}_{k}^{2}\nonumber \\
        &= {\beta}_{1}{\vartheta}_{1}^{2}{\overline{\vartheta}}_{0}^{2} + \sum\limits_{i = 1}^{k - 1}{\left( {\beta}_{i + 1} - {\beta}_{i - 1} \right){\vartheta}_{i + 1}^{2}{\overline{\vartheta}}_{i}^{2}} + \sum\limits_{i = 1}^{k - 1}{{\beta}_{i - 1}\left( {\vartheta}_{i + 1}^{2} - {\vartheta}_{i - 1}^{2} \right){\overline{\vartheta}}_{i}^{2}} - {\beta}_{k - 1}{\vartheta}_{k - 1}^{2}{\overline{\vartheta}}_{k}^{2}\,.\label{eq:theorem_sum_sum_beta_varrho}
    \end{align}
    Through the insertion of equalities \eqref{eq:theorem_sum_alpha_varrho} and \eqref{eq:theorem_sum_sum_beta_varrho} into \eqref{eq:theorem_lambda_sum_eqt}, we derive that
    \begin{align}\label{eq:theorem_expand_sum_lambda_eqt}
        &{\overline{\lambda}}_{k + 1} + \frac{1}{2}\left( {\alpha}_{k + 1} + {\beta}_{k - 1}{\vartheta}_{k - 1}^{2} \right){\overline{\vartheta}}_{k}^{2} = {\overline{\lambda}}_{1} + \frac{1}{2}\left( {\alpha}_{1} + {\beta}_{1}{\vartheta}_{1}^{2} \right){\overline{\vartheta}}_{0}^{2}\nonumber \\
        &+ \frac{1}{2}\sum\limits_{i = 1}^{k}{\left( {\alpha}_{i + 1} - {\alpha}_{i - 1} \right){\overline{\vartheta}}_{i}^{2}} + \frac{1}{2}\sum\limits_{i = 1}^{k - 1}{\left( {\beta}_{i + 1} - {\beta}_{i - 1} \right){\vartheta}_{i + 1}^{2}{\overline{\vartheta}}_{i}^{2}} + \frac{1}{2}\sum\limits_{i = 1}^{k - 1}{{\beta}_{i - 1}\left( {\vartheta}_{i + 1}^{2} - {\vartheta}_{i - 1}^{2} \right){\overline{\vartheta}}_{i}^{2}} + \sum\limits_{i = 1}^{k}{{\overline{\delta}}_{i}}\,.
    \end{align}
    By virtue of equation \eqref{eq:lemma1_alpha_ineqt}, we arrive at the straightforward conclusion that
    \begin{equation}\label{eq:theorem_sum_alpha_ineqt}
        \frac{1}{2}\sum\limits_{i = 1}^{k}{\left( {\alpha}_{i + 1} - {\alpha}_{i - 1} \right){\overline{\vartheta}}_{i}^{2}} \leq {\c}_{2}{\tau}\sum\limits_{i = 1}^{k}{{\overline{\vartheta}}_{i}^{2}}\,,\quad {\c}_{2} = \max\limits_{0 \leq t \leq {T}}{\left| {\alpha}^{\prime}\left( t \right) \right|}\,.
    \end{equation}
    As a consequence of \eqref{eq:lemma1_beta_ineqt} and the \hyperref[lemma:lemma1]{\bf Lemma \ref*{lemma:lemma1}} (see also the \hyperref[rmk:remark-lemma1]{\bf Remark \ref*{rmk:remark-lemma1}}), we find that
    \begin{equation}\label{eq:theorem_sum_beta_ineqt}
        \frac{1}{2}\sum\limits_{i = 1}^{k - 1}{\left( {\beta}_{i + 1} - {\beta}_{i - 1} \right){\vartheta}_{i + 1}^{2}{\overline{\vartheta}}_{i}^{2}} \leq {\c}_{3}{\M}_{2}^{2}{\tau}\sum\limits_{i = 1}^{k - 1}{{\overline{\vartheta}}_{i}^{2}}\,,\quad {\c}_{3} = \max\limits_{0 \leq t \leq {T}}{\left| {\beta}^{\prime}\left( t \right) \right|}\,.
    \end{equation}
    According to \hyperref[lemma:lemma1]{\bf Lemma \ref*{lemma:lemma1}} and the \hyperref[lemma:lemma2]{\bf Lemma \ref*{lemma:lemma2}}, an estimate for the difference $\displaystyle {\vartheta}_{i + 1}^{2} - {\vartheta}_{i - 1}^{2}$ can be derived
    \begin{align}\label{eq:theorem_varrho_square_inqt}
        \left| {\vartheta}_{i + 1}^{2} - {\vartheta}_{i - 1}^{2} \right| &\leq \left| {\vartheta}_{i + 1} - {\vartheta}_{i - 1} \right|\left( {\vartheta}_{i + 1} + {\vartheta}_{i - 1} \right)\nonumber \\
        &\leq \left( \left| {\vartheta}_{i + 1} - {\vartheta}_{i} \right| + \left| {\vartheta}_{i} - {\vartheta}_{i - 1} \right| \right)\left( {\vartheta}_{i + 1} + {\vartheta}_{i - 1} \right)\nonumber \\
        &\leq {\tau}\left( \left\| {\frac{\d}{{\d}{x}}} \left( \frac{\Delta{u}_{i}}{\tau} \right) \right\| + \left\| {\frac{\d}{{\d}{x}}} \left( \frac{\Delta{u}_{i - 1}}{\tau} \right) \right\| \right)\left( \left\| \frac{{\d}{u}_{i + 1}}{{\d}{x}} \right\| + \left\| \frac{{\d}{u}_{i - 1}}{{\d}{x}} \right\| \right)\nonumber \\
        &\leq {4}{\M}_{2}{\M}_{3}{\tau}\,.
    \end{align}
    By virtue of equality \eqref{eq:theorem_varrho_square_inqt}, it can be deduced that
    \begin{equation}\label{eq:theorem_beta_varrho_square_inqt}
        \frac{1}{2}\sum\limits_{i = 1}^{k - 1}{{\beta}_{i - 1}\left( {\vartheta}_{i + 1}^{2} - {\vartheta}_{i - 1}^{2} \right){\overline{\vartheta}}_{i}^{2}} \leq {2}{\M}_{2}{\M}_{3}{\c}_{23}{\tau}\sum\limits_{i = 1}^{k - 1}{{\overline{\vartheta}}_{i}^{2}}\,,\quad {\c}_{23} = \max\limits_{0 \leq t \leq {T}}{\beta\left( t \right)}\,.
    \end{equation}
    By employing the Cauchy-Schwarz inequality, we can find that
    \begin{align*}
        \left| {\overline{\delta}}_{i} \right| &\leq \left\| {g}_{i} \right\|\left( \left\| {\Delta}{z}_{i} \right\| + \left\| {\Delta}{z}_{i - 1} \right\| \right) = {\tau}\left\| {g}_{i} \right\|\left( {\overline{\mu}}_{i + 1} + {\overline{\mu}}_{i} \right) \\
        &\leq {\tau}\left( \frac{1}{2}\left| {q}\left( {t}_{i} \right) - {q}_{i} \right|\left( \left\| \frac{{\d}^{2}{u}\left( \cdot,{t}_{i + 1} \right)}{{\d}{x}^{2}} \right\| + \left\| \frac{{\d}^{2}{u}\left( \cdot,{t}_{i - 1} \right)}{{\d}{x}^{2}} \right\| \right) + \left\| {R}_{i}\left( \cdot,{\tau} \right) \right\| \right)\left( {\overline{\mu}}_{i + 1} + {\overline{\mu}}_{i} \right)\,.
    \end{align*}
    We shall proceed to estimate the given difference while taking into consideration the implications of \hyperref[lemma:lemma1]{\bf Lemma \ref*{lemma:lemma1}}
    \begin{align*}
        \frac{1}{2}\left| {q}\left( {t}_{i} \right) - {q}_{i} \right| &= \frac{1}{2}{\beta}_{i}\left| {\left\| \frac{{\d}u\left( \cdot,{t}_{i} \right)}{{\d}{x}} \right\|}^{2} - {\left\| \frac{{\d}{u}_{i}}{{\d}{x}} \right\|}^{2} \right| = \frac{1}{2}{\beta}_{i}\left| \left\| \frac{{\d}u\left( \cdot,{t}_{i} \right)}{{\d}{x}} \right\| - \left\| \frac{{\d}{u}_{i}}{{\d}{x}} \right\| \right|\left( \left\| \frac{{\d}u\left( \cdot,{t}_{i} \right)}{{\d}{x}} \right\| + \left\| \frac{{\d}{u}_{i}}{{\d}{x}} \right\| \right) \\
        &\leq \frac{1}{2}{\beta}_{i}\left\| \frac{{\d}{z}_{i}}{{\d}{x}} \right\|\left( \left\| \frac{{\d}u\left( \cdot,{t}_{i} \right)}{{\d}{x}} \right\| + \left\| \frac{{\d}{u}_{i}}{{\d}{x}} \right\| \right) \leq \frac{1}{2}{\c}_{23}\left( {\c}_{24} + {\M}_{2} \right){{\overline{\vartheta}}_{i}} = {\c}_{25}{{\overline{\vartheta}}_{i}}\,.
    \end{align*}
    Utilizing the last inequality and combining it with \eqref{eq:theorem_main_remainder_term} for ${\overline{\delta}}_{i}$, we arrive at the following result
    \begin{align}\label{eq:theorem_overline_delta_i}
        \left| {\overline{\delta}}_{i} \right| &\leq {\tau}\left( {\c}_{25}{\c}_{26}{{\overline{\vartheta}}_{i}} + {\c}_{22}{\tau}^{2} \right)\left( {\overline{\mu}}_{i + 1} + {\overline{\mu}}_{i} \right)\nonumber \\
        &\leq \max\left( {\c}_{25}{\c}_{26},{\c}_{22} \right){\tau}\left( {\overline{\vartheta}}_{i} + {\tau}^{2} \right)\left( {\overline{\mu}}_{i + 1} + {\overline{\mu}}_{i} \right)\nonumber \\
        &={\c}_{27}{\tau}\left( {\overline{\vartheta}}_{i} + {\tau}^{2} \right)\left( {\overline{\mu}}_{i + 1} + {\overline{\mu}}_{i} \right)\nonumber \\
        &= {\c}_{27}{\tau}\left[ \left( {\overline{\vartheta}}_{i}{\overline{\mu}}_{i + 1} + {\overline{\vartheta}}_{i}{\overline{\mu}}_{i} \right) + {\tau}^{2}\left( {\overline{\mu}}_{i + 1} + {\overline{\mu}}_{i} \right) \right]\nonumber \\
        &\leq {\c}_{27}{\tau}\left[ \frac{1}{2}\left( {\overline{\vartheta}}_{i}^{2} + {\overline{\mu}}_{i + 1}^{2} \right) + \frac{1}{2}\left( {\overline{\vartheta}}_{i}^{2} + {\overline{\mu}}_{i}^{2} \right) + \frac{1}{2}\left( {\tau}^{4} + {\left( {\overline{\mu}}_{i + 1} + {\overline{\mu}}_{i} \right)}^{2} \right) \right]\nonumber \\
        &= {\c}_{27}{\tau}\left[ {\overline{\vartheta}}_{i}^{2} + \left( {\overline{\mu}}_{i + 1}^{2} + {\overline{\mu}}_{i}^{2} \right) + \frac{1}{2}{\tau}^{4} + {\overline{\mu}}_{i + 1}{\overline{\mu}}_{i} \right]\nonumber \\
        &\leq {\c}_{27}{\tau}\left[ {\overline{\vartheta}}_{i}^{2} + \left( {\overline{\mu}}_{i + 1}^{2} + {\overline{\mu}}_{i}^{2} \right) + \frac{1}{2}{\tau}^{4} + \frac{1}{2}\left( {\overline{\mu}}_{i + 1}^{2} + {\overline{\mu}}_{i}^{2} \right) \right]\nonumber \\
        &= {\c}_{27}{\tau}\left( {\overline{\vartheta}}_{i}^{2} + \frac{3}{2}{\overline{\mu}}_{i + 1}^{2} + \frac{3}{2}{\overline{\mu}}_{i}^{2} + \frac{1}{2}{\tau}^{4} \right) \leq {\c}_{28}{\tau}\left( {\overline{\vartheta}}_{i}^{2} + {\overline{\mu}}_{i}^{2} + {\overline{\mu}}_{i + 1}^{2} \right) + {\c}_{28}{\tau}^{5}\,.
    \end{align}
    In order to obtain the desired estimate for ${\overline{\delta}}_{i}$, it is necessary to use the following inequalities. It is important to note that ${\alpha}\left( t \right) \geq {\c}_{0} > 0$. Therefore, we have:
    \begin{align*}
        {\overline{\lambda}}_{i} &= {\overline{\mu}}_{i}^{2} + \frac{1}{2}\left( {\alpha}_{i - 1} + {\beta}_{i - 1}{\vartheta}_{i - 1}^{2} \right){\overline{\vartheta}}_{i}^{2} \geq {\overline{\mu}}_{i}^{2} + \frac{{\c}_{0}}{2}{\overline{\vartheta}}_{i}^{2} \geq {\c}_{29}\left( {\overline{\mu}}_{i}^{2} + {\overline{\vartheta}}_{i}^{2} \right)\,,\quad {\c}_{29} = \min\left( 1,\frac{{\c}_{0}}{2} \right)\,,
    \end{align*}
    consequently, it can be inferred that
    \begin{equation}\label{eq:theorem_sum_mu_square_and_varrho_square}
        {\overline{\mu}}_{i}^{2} + {\overline{\vartheta}}_{i}^{2} \leq \frac{1}{{\c}_{29}}{\overline{\lambda}}_{i}\,.
    \end{equation}
    It is evident that the following inequality holds
    \begin{equation}\label{eq:theorem_overl_mu_leq_overl_lambda}
        {\overline{\mu}}_{i}^{2} \leq {\overline{\lambda}}_{i} = {\overline{\mu}}_{i}^{2} + \frac{1}{2}\left( {\alpha}_{i - 1} + {\beta}_{i - 1}{\vartheta}_{i - 1}^{2} \right){\overline{\vartheta}}_{i}^{2}\,.
    \end{equation}
    Taking into consideration the estimates \eqref{eq:theorem_sum_mu_square_and_varrho_square} and \eqref{eq:theorem_overl_mu_leq_overl_lambda}, we can obtain from \eqref{eq:theorem_overline_delta_i}:
    \begin{align}\label{eq:theorem_overal_detla_final_ineqt}
        \left| {\overline{\delta}}_{i} \right| &\leq {\c}_{28}{\tau}\left( \left( {\overline{\mu}}_{i}^{2} + {\overline{\vartheta}}_{i}^{2} \right) + {\overline{\mu}}_{i + 1}^{2} \right) + {\c}_{28}{\tau}^{5} \leq {\c}_{28}{\tau}\left( \frac{1}{{\c}_{29}}{\overline{\lambda}}_{i} + {\overline{\lambda}}_{i + 1} \right) + {\c}_{28}{\tau}^{5}\nonumber \\
        &\leq {\c}_{28}\max\left( 1,\frac{1}{{\c}_{29}} \right){\tau}\left( {\overline{\lambda}}_{i} + {\overline{\lambda}}_{i + 1} \right) + {\c}_{28}{\tau}^{5}\nonumber \\
        &= {\c}_{28}\max\left( 1,\frac{2}{{\c}_{0}} \right){\tau}\left( {\overline{\lambda}}_{i} + {\overline{\lambda}}_{i + 1} \right) + {\c}_{28}{\tau}^{5}\nonumber \\
        &= {\c}_{30}{\tau}\left( {\overline{\lambda}}_{i} + {\overline{\lambda}}_{i + 1} \right) + {\c}_{28}{\tau}^{5}\,.
    \end{align}
    Incorporating the inequalities \eqref{eq:theorem_sum_alpha_ineqt}, \eqref{eq:theorem_sum_beta_ineqt}, \eqref{eq:theorem_beta_varrho_square_inqt} and \eqref{eq:theorem_overal_detla_final_ineqt} into \eqref{eq:theorem_expand_sum_lambda_eqt}, we arrive at
    \begin{align*}
        &{\overline{\lambda}}_{k + 1} + \frac{1}{2}\left( {\alpha}_{k + 1} + {\beta}_{k - 1}{\vartheta}_{k - 1}^{2} \right){\overline{\vartheta}}_{k}^{2} \leq {\overline{\lambda}}_{1} + \frac{1}{2}\left( {\alpha}_{1} + {\beta}_{1}{\vartheta}_{1}^{2} \right){\overline{\vartheta}}_{0}^{2} \\
        &+ {\c}_{2}{\tau}\sum\limits_{i = 1}^{k}{{\overline{\vartheta}}_{i}^{2}} + \left( {\c}_{3}{\M}_{2}^{2} + {2}{\M}_{2}{\M}_{3}{\c}_{23} \right){\tau}\sum\limits_{i = 1}^{k - 1}{{\overline{\vartheta}}_{i}^{2}} + {\c}_{30}{\tau}\sum\limits_{i = 1}^{k}{\left( {\overline{\lambda}}_{i} + {\overline{\lambda}}_{i + 1} \right)} + {\c}_{28}{k}{\tau}^{5} \\
        &\leq {\overline{\lambda}}_{1} + \frac{1}{2}\left( {\alpha}_{1} + {\beta}_{1}{\vartheta}_{1}^{2} \right){\overline{\vartheta}}_{0}^{2} + \left( {\c}_{2} + {\c}_{3}{\M}_{2}^{2} + {2}{\M}_{2}{\M}_{3}{\c}_{23} \right){\tau}\sum\limits_{i = 1}^{k}{{\overline{\vartheta}}_{i}^{2}} + {\c}_{30}{\tau}\sum\limits_{i = 1}^{k}{\left( {\overline{\lambda}}_{i} + {\overline{\lambda}}_{i + 1} \right)} + {\c}_{28}{k}{\tau}^{5} \\
        &= {\overline{\lambda}}_{1} + \frac{1}{2}\left( {\alpha}_{1} + {\beta}_{1}{\vartheta}_{1}^{2} \right){\overline{\vartheta}}_{0}^{2} + {\c}_{31}{\tau}\sum\limits_{i = 1}^{k}{{\overline{\vartheta}}_{i}^{2}} + {\c}_{30}{\tau}\sum\limits_{i = 1}^{k}{\left( {\overline{\lambda}}_{i} + {\overline{\lambda}}_{i + 1} \right)} + {\c}_{28}{k}{\tau}^{5}\,.
    \end{align*}
    The following three simple estimates can be derived:
    \begin{gather*}
        {\overline{\vartheta}}_{i}^{2} \leq \frac{2}{{\c}_{0}}{\overline{\lambda}}_{i} = {2}{\c}_{7}{\overline{\lambda}}_{i}\,, \\
        {\c}_{30}{\tau}\sum\limits_{i = 1}^{k}{\left( {\overline{\lambda}}_{i} + {\overline{\lambda}}_{i + 1} \right)} = {\c}_{30}{\tau}\left( {\overline{\lambda}}_{1} + {2}\sum\limits_{i = 2}^{k}{{\overline{\lambda}}_{i}} + {\overline{\lambda}}_{k + 1} \right) \leq {2}{\c}_{30}{\tau}\sum\limits_{i = 1}^{k + 1}{{\overline{\lambda}}_{i}}\,, \\
        {\c}_{28}{k}{\tau}^{5} = {\c}_{28}\left( {k}{\tau} \right){\tau}^{4} \leq {\c}_{28}{\overline{T}}{\tau}^{4}\,.
    \end{gather*}
    Taking into account the above-mentioned inequalities, it can be concluded that
    \begin{align*}
        {\overline{\lambda}}_{k + 1} + \frac{1}{2}\left( {\alpha}_{k + 1} + {\beta}_{k - 1}{\vartheta}_{k - 1}^{2} \right){\overline{\vartheta}}_{k}^{2} &\leq {\overline{\lambda}}_{1} + \frac{1}{2}\left( {\alpha}_{1} + {\beta}_{1}{\vartheta}_{1}^{2} \right){\overline{\vartheta}}_{0}^{2} \\
        &+ {2}\left( {\c}_{7}{\c}_{31} + {\c}_{30} \right){\tau}\sum\limits_{i = 1}^{k}{{\overline{\lambda}}_{i}} + {2}{\c}_{30}{\tau}{\overline{\lambda}}_{k + 1} + {\c}_{28}{\overline{T}}{\tau}^{4} \\
        &= {\c}_{32}{\tau}\sum\limits_{i = 1}^{k}{{\overline{\lambda}}_{i}} + {\c}_{33}{\tau}{\overline{\lambda}}_{k + 1} + {\c}_{28}{\overline{T}}{\tau}^{4}\,.
    \end{align*}
    Therefore, we have
    \begin{equation}\label{eq:theorem_final_overl_lambda_inqt}
        \left( 1 - {\c}_{33}{\tau} \right){\overline{\lambda}}_{k + 1} \leq \overline{\alpha} + {\c}_{32}{\tau}\sum\limits_{i = 1}^{k}{{\overline{\lambda}}_{i}}\,,
    \end{equation}
    where
    \begin{equation*}
        \overline{\alpha} = {\overline{\lambda}}_{1} + \frac{1}{2}\left( {\alpha}_{1} + {\beta}_{1}{\vartheta}_{1}^{2} \right){\overline{\vartheta}}_{0}^{2} + {\c}_{28}{\overline{T}}{\tau}^{4}\,.
    \end{equation*}
    Assuming that ${\c}_{33}{\tau} < 1$, we can derive from \eqref{eq:theorem_final_overl_lambda_inqt} that
    \begin{equation}\label{eq:theorem_the_previous_inqt_gronwall_lemma}
        {\overline{\lambda}}_{k + 1} \leq {\widetilde \alpha} + {\widetilde c}{\tau}\sum\limits_{i = 1}^{k}{{\overline{\lambda}}_{i}}\,,
    \end{equation}
    here
    \begin{equation*}
        {\widetilde \alpha} = \frac{\overline{\alpha}}{1 - {\c}_{33}{\tau}}\,,\quad {\widetilde c} = \frac{{\c}_{32}}{1 - {\c}_{33}{\tau}}\,.
    \end{equation*}
    From \eqref{eq:theorem_the_previous_inqt_gronwall_lemma} by applying \hyperref[lemma:gronwall-inequality]{\bf Lemma \ref*{lemma:gronwall-inequality} (Discrete Gr\"{o}nwall-type inequality)} along with \hyperref[rmk:remark1]{\bf Remark \ref*{rmk:remark1}} we get
    \begin{equation}\label{eq:theorem_gronwall_lemma}
        {\overline{\lambda}}_{k + 1} \leq {\e}^{{\tilde c}{t}_{k}}{\widetilde \alpha} \leq {\e}^{{\tilde c}\overline{T}}{\widetilde \alpha}\,.
    \end{equation}
    To estimate ${\widetilde \alpha}$, we need to first estimate ${\overline{\lambda}}_{1}$. For this purpose, we use conditions \ref{itm_theorem_a} and \ref{itm_theorem_b} of \hyperref[theorem:theorem1]{\bf Theorem \ref*{theorem:theorem1}}, and consider the Taylor expansion of the function ${u}\left( x,{t}_{1} \right)$ around the point ${t} = {0}$ with respect to the temporal variable, keeping the first three terms. We then apply equation \eqref{eq:main_eqt} and the initial conditions \eqref{eq:initial_conds} for the functions ${u}\left( x,0 \right)$, ${u}^{{\prime}}\left( x,0 \right)$ and ${u}^{{\prime}{\prime}}\left( x,0 \right)$, which gives us
    \begin{align*}
        {u}\left( x,{t}_{1} \right) &= {u}\left( x,0 \right) + {{t}_{1}}{u}^{{\prime}}\left( x,0 \right) + \frac{{t}_{1}^{2}}{2}{u}^{{\prime}{\prime}}\left( x,0 \right) + {\widetilde R}_{2}\left( x,{t}_{1} \right) \\
        &= {\psi}_{0}\left( x \right) + {\tau}{\psi}_{1}\left( x \right) + \frac{{\tau}^{2}}{2}{\psi}_{2}\left( x \right) + {\widetilde R}_{2}\left( x,{\tau} \right)\,,
    \end{align*}
    where
    \begin{equation*}
        {\psi}_{2}\left( x \right) = {f}_{0}\left( x \right) + {q}_{0}\frac{\d ^{2}{\psi}_{0}\left( x \right)}{\d {x}^{2}}\,,\quad {\widetilde R}_{2}\left( x,{\tau} \right) = \frac{1}{2}\int\limits_{0}^{{\tau}}{{\left( {\tau} - t \right)}^{2}{u}^{{\prime}{\prime}{\prime}}\left( x,t \right)}{\d t}\,,
    \end{equation*}
    on the other hand, recall that
    \begin{equation*}
        {u}_{1}\left( x \right) = {\psi}_{0}\left( x \right) + {\tau}{\psi}_{1}\left( x \right) + \frac{{\tau}^{2}}{2}{\psi}_{2}\left( x \right)\,.
    \end{equation*}
    We obtain the following estimation for ${z}_{1}\left( x \right)$
    \begin{align*}
        \max\limits_{0 \leq x \leq \ell}\left| {z}_{1}\left( x \right) \right| &= \max\limits_{0 \leq x \leq \ell}\left| {u}\left( x,{t}_{1} \right) - {u}_{1}\left( x \right) \right| = \max\limits_{0 \leq x \leq \ell}\left| {\widetilde R}_{2}\left( x,{\tau} \right) \right| \\
        &= \frac{1}{2}\max\limits_{0 \leq x \leq \ell}\left| \int\limits_{0}^{{\tau}}{{\left( {\tau} - t \right)}^{2}{u}^{{\prime}{\prime}{\prime}}\left( x,t \right)}{\d t} \right| \leq \frac{1}{2}\max\limits_{\left( x,t \right)}\left| {u}^{{\prime}{\prime}{\prime}}\left( x,t \right) \right|\int\limits_{0}^{{\tau}}{{\left( {\tau} - t \right)}^{2}}{\d t} \\
        &= \frac{1}{6}\max\limits_{\left( x,t \right)}\left| {u}^{{\prime}{\prime}{\prime}}\left( x,t \right) \right|{\tau}^{3} = {\c}_{34}{\tau}^{3}\,,
    \end{align*}
    from here, we find that
    \begin{equation}\label{eq:theorem_norm_z1}
        {\left\| {z}_{1} \right\|}^{2} = \int\limits_{0}^{{\ell}}{{\left( {u}\left( x,{t}_{1} \right) - {u}_{1}\left( x \right) \right)}^{2}}{\d x} \leq {\ell}{\c}_{34}^{2}{\tau}^{6}\,.
    \end{equation}
    In order to obtain an estimation for ${\overline{\vartheta}}_{1}^{2}$, we consider the Taylor series expansion of the function ${u}\left( x,{t}_{1} \right)$ about the point ${t} = {0}$ with respect to the temporal variable, but this time we keep only the first two terms. Similar to before, we apply the initial conditions \eqref{eq:initial_conds} for the functions ${u}\left( x,0 \right)$, ${u}^{{\prime}}\left( x,0 \right)$ to obtain the following expression, {\ie}
    \begin{align*}
        {u}\left( x,{t}_{1} \right) &= {u}\left( x,0 \right) + {{t}_{1}}{u}^{{\prime}}\left( x,0 \right) + {\widetilde R}_{1}\left( x,{t}_{1} \right) \nonumber \\
        &= {\psi}_{0}\left( x \right) + {\tau}{\psi}_{1}\left( x \right) + {\widetilde R}_{1}\left( x,{\tau} \right)\,,
    \end{align*}
    here
    \begin{equation*}
        {\widetilde R}_{1}\left( x,{\tau} \right) = \int\limits_{0}^{{\tau}}{{\left( {\tau} - t \right)}{u}^{{\prime}{\prime}}\left( x,t \right)}{\d t}\,.
    \end{equation*}
    It should be noted that ${u}_{1}\left( x \right)$ is an approximation of ${u}\left( x,{t}_{1} \right)$, {\ie} ${u}\left( x,{t}_{1} \right) \approx {u}_{1}\left( x \right) = {\psi}_{0}\left( x \right) + {\tau}{\psi}_{1}\left( x \right)$.
    \begin{align*}
        \max\limits_{0 \leq x \leq \ell}\left| \frac{{\d}{z}_{1}\left( x \right)}{{\d}{x}} \right| &= \max\limits_{0 \leq x \leq \ell}\left| \frac{{\d}}{{\d}{x}}{\left( {u}\left( x,{t}_{1} \right) - {u}_{1}\left( x \right) \right)} \right| = \max\limits_{0 \leq x \leq \ell}\left| \frac{{\d}}{{\d}{x}}{{\widetilde R}_{1}\left( x,{\tau} \right)} \right| \\
        &= \max\limits_{0 \leq x \leq \ell}\left| \frac{{\d}}{{\d}{x}}{\int\limits_{0}^{{\tau}}{{\left( {\tau} - t \right)}{u}^{{\prime}{\prime}}\left( x,t \right)}{\d t}} \right| \leq \max\limits_{\left( x,t \right)}{\left| {u}_{{x}{t}{t}}\left( x,t \right) \right|}{\int\limits_{0}^{{\tau}}{{\left( {\tau} - t \right)}}{\d t}} \\
        &\leq \frac{1}{2}\max\limits_{\left( x,t \right)}{\left| {u}_{{x}{t}{t}}\left( x,t \right) \right|}{\tau}^{2} = {\c}_{35}{\tau}^{2}\,,
    \end{align*}
    from here, we obtain
    \begin{equation}\label{eq:theorem_norm_of_second_deriv_of_z1}
        {\left\| \frac{{\d}{z}_{1}}{{\d}{x}} \right\|}^{2} = \int\limits_{0}^{{\ell}}{{\left[ \frac{{\d}}{{\d}{x}}\left( {u}\left( x,{t}_{1} \right) - {u}_{1}\left( x \right) \right) \right]}^{2}}{\d x} \leq {\ell}{{\c}_{35}^{2}{\tau}^{4}}\,.
    \end{equation}
    Based on \eqref{eq:theorem_norm_z1} and \eqref{eq:theorem_norm_of_second_deriv_of_z1}, we can derive the following estimations
    \begin{align}\label{eq:theorem_norm_of_overline_lambda_1}
        {\overline{\lambda}}_{1} &= {\overline{\mu}}_{1}^{2} + \frac{1}{2}\left( {\alpha}_{0} + {\beta}_{0}{\vartheta}_{0}^{2} \right){\overline{\vartheta}}_{1}^{2}\nonumber \\
        &= {\left\| \frac{{z}_{1} - {z}_{0}}{\tau} \right\|}^{2} + \frac{1}{2}\left( {\alpha}_{0} + {\beta}_{0}{\left\| \frac{{\d}{u}_{0}}{{\d}{x}} \right\|}^{2} \right){\left\| \frac{{\d}{z}_{1}}{{\d}{x}} \right\|}^{2}\nonumber \\
        &= \frac{1}{{\tau}^{2}}{\left\| {z}_{1} \right\|}^{2} + \frac{1}{2}\left( {\alpha}_{0} + {\beta}_{0}{\left\| \frac{{\d}{\psi}_{0}}{{\d}{x}} \right\|}^{2} \right){\left\| \frac{{\d}{z}_{1}}{{\d}{x}} \right\|}^{2}\nonumber \\
        &\leq {\ell}\left( {\c}_{34}^{2} + \frac{1}{2}\left( {\alpha}_{0} + {\beta}_{0}{{\c}_{36}^{2}} \right){\c}_{35}^{2} \right){\tau}^{4} = {\c}_{37}{\tau}^{4}\,,\quad {z}_{0}\left( x \right) = {u}\left( x,0 \right) - {u}_{0}\left( x \right) \equiv 0\,,
    \end{align}
    furthermore, it can be concluded from equation \eqref{eq:theorem_norm_of_overline_lambda_1} that
    \begin{align}\label{eq:theorem_norm_of_widetilde_alpha}
        {\widetilde \alpha} &= \frac{\overline{\alpha}}{1 - {\c}_{33}{\tau}} = \frac{1}{1 - {\c}_{33}{\tau}}\left( {\overline{\lambda}}_{1} + \frac{1}{2}\left( {\alpha}_{1} + {\beta}_{1}{\vartheta}_{1}^{2} \right){\overline{\vartheta}}_{0}^{2} + {\c}_{28}{\overline{T}}{\tau}^{4} \right)\nonumber \\
        &= \frac{1}{1 - {\c}_{33}{\tau}}\left( {\overline{\lambda}}_{1} + {\c}_{28}{\overline{T}}{\tau}^{4} \right) \leq \frac{{\c}_{37} + {\c}_{28}{\overline{T}}}{1 - {\c}_{33}{\tau}}{\tau}^{4} = {\c}_{38}{\tau}^{4}\,.
    \end{align}
    From \eqref{eq:theorem_gronwall_lemma}, taking the inequalities \eqref{eq:theorem_norm_of_overline_lambda_1} and \eqref{eq:theorem_norm_of_widetilde_alpha} into consideration, the estimates for \hyperref[theorem:theorem1]{\bf Theorem \ref*{theorem:theorem1}} are obtained.
\end{proof}
%
\begin{remark}\label{rmk:remark2}
    The error ${z}_{k}\left( x \right) = {u}\left( x,{t}_{k} \right) - {u}_{k}\left( x \right)$ of an approximate solution to the problem \eqref{eq:main_eqt}-\eqref{eq:boundary_conds} is bounded by the following inequality:
    \begin{equation}\label{eq:remark_estimate}
        \max\limits_{1 \leq {k} \leq m}{\left\| {z}_{k} \right\|} \leq {\M}_{7}{\tau}^{2}\,,
    \end{equation}
    where $\displaystyle {m} = \left[ \frac{\overline{T}}{{\tau}} \right]$, and $\overline{T}$ satisfies $0 < \overline{T} \leq {T}$.

    The validity of inequality \eqref{eq:remark_estimate} follows immediately from the identity
    \begin{equation*}
        {z}_{k}\left( x \right) = {z}_{0}\left( x \right) + {\tau}\sum\limits_{i = 0}^{k - 1}{\frac{{\Delta}{z}_{i}\left( x \right)}{\tau}}\,,\quad {z}_{0}\left( x \right) = {u}\left( x,0 \right) - {u}_{0}\left( x \right) \equiv 0\,,
    \end{equation*}
    by virtue of the second bound of \hyperref[theorem:theorem1]{\bf Theorem \ref*{theorem:theorem1}}.
\end{remark}
\section{Results of Numerical Computations}\label{sec:numresult}
\subsection{Designing a Spatial Fourth-Order Accuracy Three-Term Difference Scheme for Solving the Obtained Linear Ordinary Differential Equations}\label{subsec:design}
This subsection outlines the development of a three-term difference scheme that attains fourth-order accuracy over an equidistant space grid of length $h$. The main motivation for constructing this scheme lies in the self-contained nature of our paper, wherein we seek to provide a comprehensive treatment of numerical methods for the problem at hand. Notably, the technique used in this scheme is widely known and has been previously discussed in various literature ({\eg} see \cite{bahvalov1976}). Although the primary focus of our research pertains to the development of the symmetric three-layer semi-discrete scheme regarding the temporal variable, we also employ a combination of this scheme with a spatial fourth-order accuracy three-term difference scheme to obtain numerical results. It should be emphasized that the article \cite{RogavaTsiklauri_LocConvg2012} explores the same approach.

Observe that the symmetric three-layer semi-discrete scheme \eqref{eq:semidiscrete_scheme}, used in numerical solutions of the problem \eqref{eq:main_eqt}-\eqref{eq:boundary_conds}, yields second-order linear ordinary differential equations per temporal layer. These differential equations are given by \eqref{eq:discrt_operator_eqt}.

By introducing the notation
\begin{equation*}
    {\A} = {\I} - \frac{1}{2}{\tau}^{2}{q}_{k}\frac{\d ^{2}}{\d {x}^{2}}\,,\ \text{with}\ D\left( {\A} \right) = \left\{ u\left( x \right) \in {C}^{2}\left( \left[ 0,\ell \right] \right) \mid u\left( 0 \right) = u\left( \ell \right) = 0 \right\}\,,
\end{equation*}
one can express equation \eqref{eq:discrt_operator_eqt} in the following manner
\begin{equation*}
    {\A}{u}_{k + 1}\left( x \right) = {\tau}^{2}{f}_{k}\left( x \right) + {2}{u}_{k}\left( x \right) - {\A}{u}_{k - 1}\left( x \right)\,.
\end{equation*}
If the inverse of the operator ${\A}$ is applied to both sides of the preceding equation, the resulting expression can be obtained as follows
\begin{equation*}
    {u}_{k + 1}\left( x \right) = {\A}^{-1}{v}_{k}\left( x \right) - {u}_{k - 1}\left( x \right)\,,\quad \text{where}\quad {v}_{k}\left( x \right) = {\tau}^{2}{f}_{k}\left( x \right) + {2}{u}_{k}\left( x \right)\,.
\end{equation*}
Denoting ${w}_{k}\left( x \right) = {\A}^{-1}{v}_{k}\left( x \right)$, we have
\begin{equation*}
    {\A}{w}_{k}\left( x \right) = {v}_{k}\left( x \right)\,.
\end{equation*}
By setting the operator $\displaystyle {\A} = {\I} - \frac{1}{2}{\tau}^{2}{q}_{k}\frac{\d ^{2}}{\d {x}^{2}}$ in the above-mentioned equation, we can deduce that the resulting equation can be written in expanded form as follows
\begin{equation}\label{eq:numres_eqt_wk}
    {w}_{k}^{\prime\prime}\left( x \right) - {p}_{k}{w}_{k}\left( x \right) = {\tilde{v}}_{k}\left( x \right)\,,
\end{equation}
where $\displaystyle {p}_{k} = \frac{2}{{\tau}^{2}{q}_{k}}$ and ${\tilde{v}}_{k}\left( x \right) = -{p}_{k}{v}_{k}\left( x \right)$.

To obtain a fourth-order accuracy three-term difference scheme for the equations \eqref{eq:numres_eqt_wk}, we initially omit the temporal layer index $k$ to simplify the derivation process. Upon deriving the scheme, we subsequently reintroduce the temporal layer index $k$ into the equations. It should be noted that the resulting difference equations are dependent on two indices, namely $k$ and $i$, where $i$ represents the $i$-th node along the spatial grid.

Let $h$ denote the uniform spacing of a spatial grid defined over the domain $\left[ 0,\ell \right]$ with $m + 1$ equally spaced points. We define the partition of the domain as:
\begin{center}
    $0 = {x}_{0} < {x}_{1} < \cdots < {x}_{m} = \ell$, with ${x}_{i} = {i}{h}$ for $i = 0,1,\ldots,m$; here, $\displaystyle h = \frac{\ell}{m}$.
\end{center}

Let us consider the Taylor expansion of a function $w\left( x \right)$ about a point $x = {x}_{i}$, while maintaining the first six terms, {\ie}
\begin{equation*}
    w\left( x \right) = \sum\limits_{j = 0}^{5}{\frac{{w}^{\left( j \right)}\left( {x}_{i} \right)}{j!}{\left( x - {x}_{i} \right)}^{j}} + \bigO\left( {h}^{6} \right)\,.
\end{equation*}
If we substitute $x = {x}_{i-1}$ and $x = {x}_{i+1}$ into the Taylor expansion of the function $w\left( x \right)$ centred at ${x}_{i}$, then sum and rearrange the resulting expressions, we obtain
\begin{equation*}
    {w}^{\prime\prime}\left( {x}_{i} \right) = \frac{{\Delta}^{2} w\left( {x}_{i-1} \right)}{{h}^{2}} - \frac{{h}^{2}}{12}{w}^{\left( 4 \right)}\left( {x}_{i} \right) + \bigO\left( {h}^{4} \right)\,.
\end{equation*}
Consider the equation \eqref{eq:numres_eqt_wk} in the context of a set of points where $x = {x}_{i}$. By substituting the last equality for ${w}^{\prime\prime}\left( {x}_{i} \right)$, the equation \eqref{eq:numres_eqt_wk} can be reformulated as follows:
\begin{equation*}
    \frac{{\Delta}^{2} w\left( {x}_{i-1} \right)}{{h}^{2}} - \frac{{h}^{2}}{12}{w}^{\left( 4 \right)}\left( {x}_{i} \right) - {p}{w}\left( {x}_{i} \right) = \tilde{v}\left( {x}_{i} \right) + \bigO\left( {h}^{4} \right)\,.
\end{equation*}
By evaluating the second-order derivative of equation \eqref{eq:numres_eqt_wk} at the points $x = {x}_{i}$ and determining
\begin{equation*}
    {w}^{\left( 4 \right)}\left( {x}_{i} \right) = {p}{w}^{\prime\prime}\left( {x}_{i} \right) + \tilde{v}^{\prime\prime}\left( {x}_{i} \right) = {p}^{2}{w}\left( {x}_{i} \right) + {p}\tilde{v}\left( {x}_{i} \right) + \tilde{v}^{\prime\prime}\left( {x}_{i} \right)\,,
\end{equation*}
one can substitute the resulting value of ${w}^{\left( 4 \right)}\left( {x}_{i} \right)$ into the previous equation to draw the following conclusion
\begin{align*}
    \frac{{\Delta}^{2} w\left( {x}_{i-1} \right)}{{h}^{2}} - {p}\left( 1 + \frac{{h}^{2}}{12} {p} \right)w\left( {x}_{i} \right) = \left( 1 + \frac{{h}^{2}}{12} {p} \right)\tilde{v}\left( {x}_{i} \right) + \frac{{h}^{2}}{12}\tilde{v}^{\prime\prime}\left( {x}_{i} \right) + \bigO\left( {h}^{4} \right)\,.
\end{align*}
Through the use of the central difference approximation with a second-order accuracy to approximate the second-order derivative of the function $\tilde{v}\left( x \right)$ at the discrete spatial grid points ${x}_{i}$, an alternative formulation of the prior equation can be obtained with equivalent accuracy
\begin{equation}\label{eq:numres_exact_three-term_scheme}
    \frac{{\Delta}^{2} w\left( {x}_{i-1} \right)}{{h}^{2}} - {p}\left( 1 + \frac{{h}^{2}}{12} {p} \right)w\left( {x}_{i} \right) = \frac{1}{12}\Bigl( \tilde{v}\left( {x}_{i + 1} \right) + \left( 10 + {h}^{2}{p} \right)\tilde{v}\left( {x}_{i} \right) + \tilde{v}\left( {x}_{i - 1} \right) \Bigr) + \bigO\left( {h}^{4} \right)\,.
\end{equation}

For every temporal layer $k$, we define ${u}_{k,i}$ and ${w}_{k,i}$ to be the approximations of the unknown functions ${u}_{k}\left( x \right)$ and ${w}_{k}\left( x \right)$ at the discrete point ${x}_{i}$, respectively. Specifically, we have ${u}_{k}\left( {x}_{i} \right) \approx {u}_{k,i}$ and ${w}_{k}\left( {x}_{i} \right) \approx {w}_{k,i}$. Furthermore, we use ${f}_{k,i}$, ${v}_{k,i}$ and ${\tilde{v}}_{k,i}$ to denote the values of the given functions ${f}_{k}\left( x \right)$, ${v}_{k}\left( x \right)$ and ${\tilde{v}}_{k}\left( x \right)$ at each point ${x}_{i}$, respectively. It is noteworthy that ${v}_{k,i} \approx {\tau}^{2}{f}_{k,i} + {2}{u}_{k,i}$. Finally, the equation \eqref{eq:numres_exact_three-term_scheme} can be rewritten as follows
\begin{equation}\label{eq:tridiag_system_w}
    \begin{aligned}
        {w}_{k,i - 1} + {b}_{k}{w}_{k,i} + {w}_{k,i + 1} &= {\varphi}_{k,i}\,, \\
        {w}_{k,0} = {w}_{k,m} &= 0\,,
    \end{aligned}
\end{equation}
where the values of ${b}_{k}$ and ${\varphi}_{k,i}$ are given by:
\begin{equation*}
    {b}_{k} = -2\left[ 1 + \frac{{\delta}^{2}}{{q}_{k}}\left( 1 + \frac{{\delta}^{2}}{{6}{q}_{k}} \right) \right]\,,\quad {\varphi}_{k,i} = - \frac{{\delta}^{2}}{{6}{q}_{k}}\left( {v}_{k,i - 1} + {2}\left( 5 + \frac{{\delta}^{2}}{{q}_{k}} \right){v}_{k,i} + {v}_{k,i + 1} \right)\,,\quad \delta = \frac{h}{\tau}\,.
\end{equation*}
The problem defined by equations \eqref{eq:discrt_operator_eqt}-\eqref{eq:operator_diff_eq_bound_cond} can be approximated by computing solutions ${u}_{k + 1,i}$ for $k = 1,2,\ldots,{n - 1}$ and $i = 1,2,\ldots,{m - 1}$. These solutions can be obtained by substituting the solutions for ${w}_{k,i}$ and ${u}_{k - 1,i}$ into the equation ${u}_{k + 1,i} = {w}_{k,i} - {u}_{k - 1,i}$. To accomplish the desired objective, it is necessary to effectively solve the tridiagonal systems of linear equations in each discrete time step defined by \eqref{eq:tridiag_system_w}, in which the coefficient matrices are characterized by strict diagonal dominance. Moreover, for each fixed layer, the coefficient matrices denoted as $\mathcal{B}_{k} \in {\mathbb{R}^{\left( m - 1 \right) \times \left( m - 1 \right)}}$ in system \eqref{eq:tridiag_system_w} are symmetric tridiagonal Toeplitz matrices. It is worth noting that there exist straightforward closed-form solutions for their eigenvalues ({\cf} \cite{smith1985}):
\begin{equation*}
    {\lambda}_{k,i} = -2\left[ 1 + \frac{{\delta}^{2}}{{q}_{k}}\left( 1 + \frac{{\delta}^{2}}{{6}{q}_{k}} \right) - \cos\left( \frac{{i}{\pi}}{m} \right) \right]\,,\quad i = 1,2,\ldots,m - 1\,.
\end{equation*}
Our focus is to determine the lower bound of the parameter $\delta$. To achieve this, we proceed by assessing the condition number of the matrix $\mathcal{B}_{k}$, employing the matrix norm induced by the (vector) Euclidean norm, denoted as ${\left\| \cdot \right\|}_{2}$, {\viz}
\begin{align}\label{eq:cond_number_estimate}
    {\kappa}_{2}\left( \mathcal{B}_{k} \right) = {\left\| \mathcal{B}_{k}^{-1} \right\|}_{2} {\left\| \mathcal{B}_{k} \right\|}_{2} &= \frac{\max\limits_{1 \leq i \leq m - 1}{\left| {\lambda}_{k,i} \right|}}{\min\limits_{1 \leq i \leq m - 1}{\left| {\lambda}_{k,i} \right|}} = 1 + \frac{2\cos\left( \dfrac{{\pi}}{m} \right)}{1 + \dfrac{{\delta}^{2}}{{q}_{k}}\left( 1 + \dfrac{{\delta}^{2}}{{6}{q}_{k}} \right) - \cos\left( \dfrac{{\pi}}{m} \right)} \nonumber\\
    &\leq 1 + \frac{2}{\dfrac{{\delta}^{2}}{{q}_{k}}\left( 1 + \dfrac{{\delta}^{2}}{{6}{q}_{k}} \right)} \leq 1 + \varepsilon\,,\quad \varepsilon > 0\,,\quad m \geq 2\,.
\end{align}
It should be noted that an identical estimate can be readily obtained for the condition numbers of the matrix $\mathcal{B}_{k}$ by involving matrix norms induced by the vector $1$-norm and the vector $\infty$-norm, correspondingly.

From the given inequality \eqref{eq:cond_number_estimate}, we can derive the following result
\begin{equation*}
    \delta \geq \sqrt{{q}_{k}\left( \sqrt{9 + \frac{12}{\varepsilon}} - 3 \right)}\,.
\end{equation*}
In conclusion, by further strengthening the above-stated inequality and considering the condition ${q}_{k} > {\c}_{0} > 0$, one can infer that
\begin{equation*}
    \delta = \frac{h}{\tau} \geq \max\limits_{1 \leq {k} \leq {n - 1}}{\sqrt{\frac{{2}{q}_{k}}{\varepsilon}}} > \sqrt{\frac{{2}{\c}_{0}}{\varepsilon}}\,.
\end{equation*}

Since the value of ${q}_{k}$ depends on an unknown function, it becomes necessary to specify $h$ and $\tau$ in accordance with the condition $\delta > \sqrt{\frac{{2}{\c}_{0}}{\varepsilon}}$. This strategic choice of spacing of spatial and temporal grids guarantees in a certain way that the condition numbers of the coefficient matrices $\mathcal{B}_{k}$ for the tridiagonal systems \eqref{eq:tridiag_system_w} fall within a favourable range for efficient computational purposes.

In order to determine the values of the initial conditions \eqref{eq:numres_zerothlayr} and \eqref{eq:semidscrete_scheme_first_layer} at each spatial node ${x}_{i}$, $i = 1,2,\ldots,{m - 1}$, we evaluate the values of the given function ${\psi}_{0}\left( x \right)$ at each point ${x}_{i}$ to obtain the initial condition ${u}_{0,i}$, where ${u}_{0,i} = {\psi}_{0}\left( {x}_{i} \right)$. For the first temporal layer, we compute the values of all given functions in \eqref{eq:semidscrete_scheme_first_layer} at the points ${x}_{i}$, denoting them with lower indices $k$ and $i$ regarding the original functions. To approximate the second-order derivative of the function ${\psi}_{0}\left( x \right)$ with respect to the spatial variable at the points ${x}_{i}$, we use the central finite difference scheme with fourth-order accuracy, where the uniform grid spacing $h$ is halved to ensure that we remain within the spatial interval $\left[ 0,\ell \right]$. Specifically,
\begin{equation}\label{eq:numres_second_derr_4thord}
    \tilde{\psi}_{0,i} = {\psi}^{\prime\prime}_{0}\left( {x}_{i} \right) \approx \frac{-{\psi}_{0}\left( {x}_{i - 1} \right) + {16}{\psi}_{0}\left( {x}_{i - 1 / 2} \right) - {30}{\psi}_{0}\left( {x}_{i} \right) + {16}{\psi}_{0}\left( {x}_{i + 1 / 2} \right) - {\psi}_{0}\left( {x}_{i + 1} \right)}{{3}{h}^{2}}\,.
\end{equation}
Suppose we consider the central finite difference scheme given by \eqref{eq:numres_second_derr_4thord} in the initial condition described by \eqref{eq:semidscrete_scheme_first_layer} for the first temporal layer. In that case, we can represent it discretely at spatial points ${x}_{i}$ as follows:
\begin{equation*}
    {u}_{1,i} = {\psi}_{0,i} + {\tau}{\psi}_{1,i} + \frac{{\tau}^{2}}{2}\left( {f}_{0,i} + {q}_{0}\tilde{\psi}_{0,i} \right)\,, \ \text{with} \ {\psi}_{0,i} = {\psi}_{0}\left( {x}_{i} \right) \ \text{and} \ {\psi}_{1,i} = {\psi}_{1}\left( {x}_{i} \right)\,.
\end{equation*}

To implement the spatial discretization algorithm, the final step is to compute the integral term in the formula ${q}_{k}$, as defined in \eqref{eq:numres_qintegr}. This is accomplished using composite Simpson's rule by providing that the spatial interval $\left[ 0,\ell \right]$ is divided into $m$ subintervals, where $m$ is an even number. The resulting expression can be obtained as follows:
\begin{align*}
    {S}_{k} = \int\limits_{0}^{\ell}{\left( {u}_{k}^{\prime}\left( x \right) \right)}^{2}\d x \approx \frac{h}{3}\left[ {\left( {u}_{k}^{\prime}\left( {x}_{0} \right) \right)}^{2} + {\left( {u}_{k}^{\prime}\left( {x}_{m} \right) \right)}^{2} + {4}\sum\limits_{i = 1}^{m / 2}{{\left( {u}_{k}^{\prime}\left( {x}_{{2}{i} - 1} \right) \right)}^{2}} + {2}\sum\limits_{i = 1}^{m / 2 - 1}{{\left( {u}_{k}^{\prime}\left( {x}_{{2}{i}} \right) \right)}^{2}} \right]\,.
\end{align*}
Note that to evaluate the values of ${u}_{k}^{\prime}\left( {x}_{0} \right)$ and ${u}_{k}^{\prime}\left( {x}_{1} \right)$, we employ the forward finite difference approximation with fourth-order accuracy, which can be expressed as follows:
\begin{align*}
    {u}_{k}^{\prime}\left( {x}_{j} \right) \approx \frac{-25{u}_{k,j} + 48{u}_{k,j + 1} - 36{u}_{k,j + 2} + 16{u}_{k,j + 3} - 3{u}_{k,j + 4}}{12{h}}\,,\quad {j} = 0,1\,.
\end{align*}
In order to compute the values of the first-order derivative of the function ${u}_{k}\left( x \right)$ at the grid points ${x}_{m-1}$ and ${x}_{m}$, we use a technique similar to those employed in previous cases. However, in this scenario, we apply a fourth-order accurate backward finite difference scheme, which is defined as follows:
\begin{align*}
    {u}_{k}^{\prime}\left( {x}_{j} \right) \approx \frac{3{u}_{k,j - 4} - 16{u}_{k,j - 3} + 36{u}_{k,j - 2} - 48{u}_{k,j - 1} + 25{u}_{k,j}}{12{h}}\,,\quad {j} = {m - 1},{m}\,.
\end{align*}
For the nodes ${x}_{j}$ with $j = 2,3,\ldots,{m - 2}$, the values of ${u}_{k}^{\prime}\left( {x}_{j} \right)$ are determined by means of the central finite difference approximation with fourth-order accuracy, defined as:
\begin{align*}
    {u}_{k}^{\prime}\left( {x}_{j} \right) \approx \frac{{u}_{k,j - 2} - 8{u}_{k,j - 1} + 8{u}_{k,j + 1} - {u}_{k,j + 2}}{12{h}}\,.
\end{align*}
Finally, we denote the fourth-order accurate approximations of the values of ${S}_{k}$ and ${q}_{k}$ as $\tilde{S}_{k}$ and $\tilde{q}_{k}$, respectively. Thus, we have $\tilde{q}_{k} = {\alpha}_{k} + {\beta}_{k}\tilde{S}_{k}$.

\subsection{Stability of the Three-Point System Obtained by the Spatial Discretization Algorithm}\label{subsec:intermediate system}
Let us study the stability of the tridiagonal system of linear equations \eqref{eq:tridiag_system_w}. To do so, we rewrite this system in the matrix-vector form using the following notations:
\begin{equation*}
    {\boldsymbol{w}}_{k} = {\left( {w}_{k,1},{w}_{k,2},\ldots,{w}_{k,m - 1} \right)}^{\top}\,,
\end{equation*}
and
\begin{equation*}
    {\boldsymbol{v}}_{k}^{j} = {\left( {v}_{k,j},{v}_{k,j + 1},\ldots,{v}_{k,j + m - 2} \right)}^{\top}\,,\quad j = 0,1,2\,.
\end{equation*}
Consequently, the system \eqref{eq:tridiag_system_w} is reformulated as follows:
\begin{equation}\label{eq:tridiag_system_matrix_form}
    \mathcal{B}_{k} {\boldsymbol{w}}_{k} = {\boldsymbol{\varphi}}_{k}\,,\quad k = 1,2,\ldots,{n - 1}\,,
\end{equation}
where
\begin{equation*}
    {\boldsymbol{\varphi}}_{k} = - \frac{{\delta}^{2}}{{6}{q}_{k}}\left( {\boldsymbol{v}}_{k}^{0} + {2}\left( 5 + \frac{{\delta}^{2}}{{q}_{k}} \right){\boldsymbol{v}}_{k}^{1} + {\boldsymbol{v}}_{k}^{2} \right)\,,\quad {\boldsymbol{v}}_{k}^{j} = {\tau}^{2}{\boldsymbol{f}}_{k}^{j} + {2}{\boldsymbol{u}}_{k}^{j}\,.
\end{equation*}

The solution associated with the given data $\left( {\boldsymbol{f}}_{k},{\boldsymbol{u}}_{k},{q}_{k} \right)$ is denoted by ${\boldsymbol{w}}_{k}$, where the system is referred to as \eqref{eq:tridiag_system_matrix_form}. Herein, considering a perturbation of this data, denoted by $\left( {\boldsymbol{\tilde{f}}}_{k},{\boldsymbol{\tilde{u}}}_{k},{\tilde{q}}_{k} \right)$, leads to the solution denoted by ${\boldsymbol{\tilde{w}}}_{k}$. The objective is to estimate ${\boldsymbol{w}}_{k} - {\boldsymbol{\tilde{w}}}_{k}$ using the following differences: ${\boldsymbol{f}}_{k} - {\boldsymbol{\tilde{f}}}_{k}$, ${\boldsymbol{u}}_{k} - {\boldsymbol{\tilde{u}}}_{k}$, and ${q}_{k} - {\tilde{q}}_{k}$. It is evident that ${\boldsymbol{\tilde{w}}}_{k}$ satisfies the following system
\begin{equation}\label{eq:tridiag_system_pertubed}
    \tilde{\mathcal{B}}_{k} {\boldsymbol{\tilde{w}}}_{k} = {\boldsymbol{\tilde{\varphi}}}_{k}\,,
\end{equation}
where $\tilde{\mathcal{B}}_{k}$ is derived from the matrix $\mathcal{B}_{k}$ by substituting ${q}_{k}$ with ${\tilde{q}}_{k}$. Furthermore, ${\tilde{q}}_{k}$ is obtained by replacing ${\boldsymbol{u}}_{k}$ with ${\boldsymbol{\tilde{u}}}_{k}$ in ${q}_{k}$.

\begin{remark}\label{rmk:stability}
    Let us consider the system of linear equations \eqref{eq:tridiag_system_w} for which the following {\apriori} estimate is valid:
    \begin{equation}\label{eq:stability_estimate_remark}
        \begin{aligned}
            {\left\| {\boldsymbol{w}}_{k} - {\boldsymbol{\tilde{w}}}_{k} \right\|}_{2} &\leq 2 {\left\| {\boldsymbol{u}}_{k}^{1} - {\boldsymbol{\tilde{u}}}_{k}^{1} \right\|}_{2} + {\tau}^{2} \max\limits_{0 \leq j \leq 2}{{\left\| {\boldsymbol{f}}_{k}^{j} - {\boldsymbol{\tilde{f}}}_{k}^{j} \right\|}_{2}} \\
            &+ \frac{2 \left| {q}_{k} - {\tilde{q}}_{k} \right|}{{q}_{k}}\left( 1 + \frac{{\tilde{q}}_{k}}{{q}_{k}} \right)\left( 2 {\left\| {\boldsymbol{u}}_{k}^{1} \right\|}_{2} + {\tau}^{2} \max\limits_{0 \leq j \leq 2}{{\left\| {\boldsymbol{f}}_{k}^{j} \right\|}_{2}} \right)\,,
        \end{aligned}
    \end{equation}
    where ${\boldsymbol{w}}_{k}$ denotes the solution associated with the data $\left( {\boldsymbol{f}}_{k},{\boldsymbol{u}}_{k},{q}_{k} \right)$, referring to the system of linear equations \eqref{eq:tridiag_system_matrix_form}. Moreover, ${\boldsymbol{\tilde{w}}}_{k}$ represents the solution obtained from the perturbed data $\left( {\boldsymbol{\tilde{f}}}_{k},{\boldsymbol{\tilde{u}}}_{k},{\tilde{q}}_{k} \right)$, which corresponds to the perturbed system \eqref{eq:tridiag_system_pertubed}.
\end{remark}
\begin{proof}
    It is evident that the subsequent identity can be derived from equations \eqref{eq:tridiag_system_matrix_form} and \eqref{eq:tridiag_system_pertubed}
    \begin{align*}
        {\boldsymbol{w}}_{k} - {\boldsymbol{\tilde{w}}}_{k} = \mathcal{B}_{k}^{-1} {\boldsymbol{\varphi}}_{k} - \tilde{\mathcal{B}}_{k}^{-1} {\boldsymbol{\tilde{\varphi}}}_{k} &= \left( \mathcal{B}_{k}^{-1} - \tilde{\mathcal{B}}_{k}^{-1} \right) {\boldsymbol{\varphi}}_{k} + \tilde{\mathcal{B}}_{k}^{-1} \left( {\boldsymbol{\varphi}}_{k} - {\boldsymbol{\tilde{\varphi}}}_{k} \right) \\
        &= \tilde{\mathcal{B}}_{k}^{-1} \left( \tilde{\mathcal{B}}_{k} - \mathcal{B}_{k} \right) \mathcal{B}_{k}^{-1} {\boldsymbol{\varphi}}_{k} + \tilde{\mathcal{B}}_{k}^{-1} \left( {\boldsymbol{\varphi}}_{k} - {\boldsymbol{\tilde{\varphi}}}_{k} \right)\,.
    \end{align*}
    Hence, we obtain that
    \begin{equation}\label{eq:stability_inequality_difference_pert}
        {\left\| {\boldsymbol{w}}_{k} - {\boldsymbol{\tilde{w}}}_{k} \right\|}_{2} \leq {\left\|  \tilde{\mathcal{B}}_{k}^{-1} \left( \tilde{\mathcal{B}}_{k} - \mathcal{B}_{k} \right) \mathcal{B}_{k}^{-1} {\boldsymbol{\varphi}}_{k} \right\|}_{2} + {\left\| \tilde{\mathcal{B}}_{k}^{-1} \left( {\boldsymbol{\varphi}}_{k} - {\boldsymbol{\tilde{\varphi}}}_{k} \right) \right\|}_{2}\,.
    \end{equation}
    
    The vector ${\boldsymbol{\varphi}}_{k}$ fulfills the estimate
    \begin{equation}\label{eq:stability_varphi_norm}
        {\left\| {\boldsymbol{\varphi}}_{k} \right\|}_{2} \leq \frac{{\delta}^{2}}{{6}{q}_{k}}\left( {\left\| {\boldsymbol{v}}_{k}^{0} \right\|}_{2} + {2}\left( 5 + \frac{{\delta}^{2}}{{q}_{k}} \right){\left\| {\boldsymbol{v}}_{k}^{1} \right\|}_{2} + {\left\| {\boldsymbol{v}}_{k}^{2} \right\|}_{2} \right)\,.
    \end{equation}
    The following inequality holds for the vectors ${\boldsymbol{v}}_{k}^{j}$, where $j = 0,1,2$
    \begin{equation}\label{eq:stability_inqt_vj}
        {\left\| {\boldsymbol{v}}_{k}^{j} \right\|}_{2} \leq 2{\left\| {\boldsymbol{u}}_{k}^{1} \right\|}_{2} + {\tau}^{2} \max\limits_{0 \leq j \leq 2}{{\left\| {\boldsymbol{f}}_{k}^{j} \right\|}_{2}}\,.
    \end{equation}
    From the estimation \eqref{eq:stability_varphi_norm}, taking into account \eqref{eq:stability_inqt_vj} it follows that
    \begin{equation}\label{eq:stability_varphi_with_u}
        {\left\| {\boldsymbol{\varphi}}_{k} \right\|}_{2} \leq \frac{2{\delta}^{2}}{{q}_{k}} \left( 1 + \frac{{\delta}^{2}}{6{q}_{k}} \right)\left( 2{\left\| {\boldsymbol{u}}_{k}^{1} \right\|}_{2} + {\tau}^{2} \max\limits_{0 \leq j \leq 2}{{\left\| {\boldsymbol{f}}_{k}^{j} \right\|}_{2}} \right)\,.
    \end{equation}
    
    For the inverse matrix of $\mathcal{B}_{k}$, the following estimation is valid,
    \begin{equation}\label{eq:stability_inv_matrix_norm_inqt}
        {\left\| \mathcal{B}_{k}^{-1} \right\|}_{2} = \frac{1}{\min\limits_{1 \leq i \leq m - 1}{\left| {\lambda}_{k,i} \right|}} \leq {\left( \frac{2{\delta}^{2}}{{q}_{k}} \left( 1 + \frac{{\delta}^{2}}{6{q}_{k}} \right) \right)}^{-1}\,.
    \end{equation}
    Analogously to the expression \eqref{eq:stability_inv_matrix_norm_inqt}, we derive the following inequality
    \begin{equation}\label{eq:stability_inv_pert_matrix_norm_inqt}
        {\left\| \tilde{\mathcal{B}}_{k}^{-1} \right\|}_{2} \leq {\left( \frac{2{\delta}^{2}}{{\tilde{q}}_{k}} \left( 1 + \frac{{\delta}^{2}}{6{\tilde{q}}_{k}} \right) \right)}^{-1}\,.
    \end{equation}
    Let us now move forward with the evaluation of the matrix difference, denoted as $\tilde{\mathcal{B}}_{k} - \mathcal{B}_{k}$. Evidently, at each temporal layer, the matrix exhibits the characteristics of a diagonal-constant matrix, featuring nonzero elements only along the principal diagonal, with all other entries being zero. These components precisely correspond to the difference $\tilde{b}_{k} - {b}_{k}$. Through a straightforward calculation, we arrive at the result
    \begin{equation}\label{eq:stability_diagonal_matrix_elements}
        \tilde{b}_{k} - {b}_{k} = \frac{2{\delta}^{2}}{{\tilde{q}}_{k}{q}_{k}}\left( {\tilde{q}}_{k} - {q}_{k} \right)\left( 1 + \frac{{\delta}^{2}}{{6}{\tilde{q}}_{k}} + \frac{{\delta}^{2}}{{6}{q}_{k}} \right)\,.
    \end{equation}
    Upon considering the inequalities \eqref{eq:stability_varphi_with_u}, \eqref{eq:stability_inv_matrix_norm_inqt}, \eqref{eq:stability_inv_pert_matrix_norm_inqt} together with \eqref{eq:stability_diagonal_matrix_elements} for the first term of the right-hand side of inequality \eqref{eq:stability_inequality_difference_pert}, we derive the following conclusion
    \begin{equation}\label{eq:stability_first_term_inequality_diff}
        {\left\|  \tilde{\mathcal{B}}_{k}^{-1} \left( \tilde{\mathcal{B}}_{k} - \mathcal{B}_{k} \right) \mathcal{B}_{k}^{-1} {\boldsymbol{\varphi}}_{k} \right\|}_{2} \leq \frac{\left| {\tilde{q}}_{k} - {q}_{k} \right|}{{q}_{k}}\left( 1 + \frac{{\tilde{q}}_{k}}{{q}_{k}} \right)\left( 2{\left\| {\boldsymbol{u}}_{k}^{1} \right\|}_{2} + {\tau}^{2} \max\limits_{0 \leq j \leq 2}{{\left\| {\boldsymbol{f}}_{k}^{j} \right\|}_{2}} \right)\,.
    \end{equation}

    Employing a straightforward transformation, we immediately establish the following identity
    \begin{equation*}
        {\boldsymbol{\varphi}}_{k} - {\boldsymbol{\tilde{\varphi}}}_{k} = {\boldsymbol{\psi}}_{k,1} + {\boldsymbol{\psi}}_{k,2}\,,
    \end{equation*}
    where:
    \begin{align}
        {\boldsymbol{\psi}}_{k,1} &= \frac{{\delta}^{2}}{6{\tilde{q}}_{k}{q}_{k}} \left(  {q}_{k} - {\tilde{q}}_{k} \right) \left[ {\boldsymbol{v}}_{k}^{0} + {2}\left( 5 + \frac{{\delta}^{2}}{{\tilde{q}}_{k}} + \frac{{\delta}^{2}}{{q}_{k}} \right){\boldsymbol{v}}_{k}^{1} + {\boldsymbol{v}}_{k}^{2} \right]\,,\label{eq:stability_psi1} \\
        {\boldsymbol{\psi}}_{k,2} &= \frac{{\delta}^{2}}{{6}{\tilde{q}}_{k}}\left[ \left( {\boldsymbol{\tilde{v}}}_{k}^{0} - {\boldsymbol{v}}_{k}^{0} \right) + {2}\left( 5 + \frac{{\delta}^{2}}{{\tilde{q}}_{k}} \right)\left( {\boldsymbol{\tilde{v}}}_{k}^{1} - {\boldsymbol{v}}_{k}^{1} \right) + \left( {\boldsymbol{\tilde{v}}}_{k}^{2} - {\boldsymbol{v}}_{k}^{2} \right) \right]\,.\label{eq:stability_psi2}
    \end{align}
    By applying inequality \eqref{eq:stability_inqt_vj} in a way similar to \eqref{eq:stability_varphi_with_u}, we can derive the estimates for \eqref{eq:stability_psi1} and \eqref{eq:stability_psi2}, {\viz}
    \begin{align}
        {\left\| {\boldsymbol{\psi}}_{k,1} \right\|}_{2} &\leq \frac{2{\delta}^{2}}{{\tilde{q}}_{k}{q}_{k}} \left|  {q}_{k} - {\tilde{q}}_{k} \right| \left( 1 + \frac{{\delta}^{2}}{{6}{\tilde{q}}_{k}} + \frac{{\delta}^{2}}{{6}{q}_{k}} \right) \left( 2{\left\| {\boldsymbol{u}}_{k}^{1} \right\|}_{2} + {\tau}^{2} \max\limits_{0 \leq j \leq 2}{{\left\| {\boldsymbol{f}}_{k}^{j} \right\|}_{2}} \right)\,,\label{eq:stability_norm_psi1} \\
        {\left\| {\boldsymbol{\psi}}_{k,2} \right\|}_{2} &\leq \frac{2{\delta}^{2}}{{\tilde{q}}_{k}} \left( 1 + \frac{{\delta}^{2}}{6{\tilde{q}}_{k}} \right)\left( 2{\left\| {\boldsymbol{\tilde{u}}}_{k}^{1} - {\boldsymbol{u}}_{k}^{1} \right\|}_{2} + {\tau}^{2} \max\limits_{0 \leq j \leq 2}{{\left\| {\boldsymbol{\tilde{f}}}_{k}^{j} - {\boldsymbol{f}}_{k}^{j} \right\|}_{2}} \right)\,.\label{eq:stability_norm_psi2}
    \end{align}
    In light of inequalities \eqref{eq:stability_norm_psi1} and \eqref{eq:stability_norm_psi2} along with \eqref{eq:stability_inv_pert_matrix_norm_inqt} applied to the second term of the right-hand side of inequality \eqref{eq:stability_inequality_difference_pert}, we deduce the subsequent estimation
    \begin{align}\label{eq:stability_second_term_inequality_diff}
        {\left\| \tilde{\mathcal{B}}_{k}^{-1} \left( {\boldsymbol{\varphi}}_{k} - {\boldsymbol{\tilde{\varphi}}}_{k} \right) \right\|}_{2} &\leq \frac{\left|  {q}_{k} - {\tilde{q}}_{k} \right|}{{q}_{k}} \left( 1 + \frac{{\tilde{q}}_{k}}{{q}_{k}} \right) \left( 2{\left\| {\boldsymbol{u}}_{k}^{1} \right\|}_{2} + {\tau}^{2} \max\limits_{0 \leq j \leq 2}{{\left\| {\boldsymbol{f}}_{k}^{j} \right\|}_{2}} \right)\nonumber \\
        &+ 2{\left\| {\boldsymbol{\tilde{u}}}_{k}^{1} - {\boldsymbol{u}}_{k}^{1} \right\|}_{2} + {\tau}^{2} \max\limits_{0 \leq j \leq 2}{{\left\| {\boldsymbol{\tilde{f}}}_{k}^{j} - {\boldsymbol{f}}_{k}^{j} \right\|}_{2}}\,.
    \end{align}
    When we substitute the inequalities \eqref{eq:stability_first_term_inequality_diff} and \eqref{eq:stability_second_term_inequality_diff} into \eqref{eq:stability_inequality_difference_pert}, the result \eqref{eq:stability_estimate_remark} is obtained, thereby concluding the proof. Definitely, the stability of the system \eqref{eq:tridiag_system_w} follows from the inequality \eqref{eq:stability_estimate_remark}.
\end{proof}

\subsection{Numerical Illustrations}\label{subsec:numillust}
We provide a complete numerical implementation of the four benchmark examples (referred to as \hyperref[problem:test1]{\bf Test \ref*{problem:test1}}, \hyperref[problem:test2]{\bf Test \ref*{problem:test2}}, \hyperref[problem:test3]{\bf Test \ref*{problem:test3}}, and \hyperref[problem:test4]{\bf Test \ref*{problem:test4}}), including analysis of the obtained results, for our proposed algorithm that deals with discretizing time and spatial domains. All computations were performed using GNU Octave 8.4.0. You can access the source code, supplementary materials, and data obtained from these cases on \href{https://github.com/zv1991/Semi-Discrete-Scheme-for-Kirchhoff-Eqn}{GitHub}\footnote{GitHub repository: \url{https://github.com/zv1991/Semi-Discrete-Scheme-for-Kirchhoff-Eqn}} and \href{https://doi.org/10.5281/zenodo.10137813}{Zenodo} \cite{rogava_2023_10137922}.

The approximation error, denoted as ${E}_{m}\left( {t}_{k} \right)$, is defined by the following expression:
\begin{equation}\label{eq:approx_error}
    {E}_{m}\left( {t}_{k} \right) = \max\limits_{0 \leq i \leq m}\left| u\left( {x}_{i},{t}_{k} \right) - {u}_{k,i} \right|\,.
\end{equation}
Here, $u\left( {x}_{i},{t}_{k} \right)$ represents the values of the exact solution $u\left( x,t \right)$ for problem \eqref{eq:main_eqt}-\eqref{eq:boundary_conds} at the points ${x}_{i}$ and ${t}_{k}$. Furthermore, ${u}_{k,i}$ corresponds to the computed approximate solutions of problem \eqref{eq:main_eqt}-\eqref{eq:boundary_conds}. It should be noted that the order of accuracy for the semi-discrete scheme \eqref{eq:semidiscrete_scheme}, when combined with the spatial discretization algorithm, is $\bigO\left( {\tau}^{2} + {h}^{4} \right)$.

Consider the following pair of test problems ({\cf} \cite{RogavaTsiklauri_LocConvg2012,vashakidze2020,vashakidze2022}) with the given parameters: length $\ell = 1$, time interval $T = 1$, coefficient functions $\displaystyle \alpha\left( t \right) = 2 + \sin\left( \frac{10 \pi}{T} t \right)$, and $\beta\left( t \right) = 1 + {t}^{2}$.
\begin{test}\label{problem:test1}
    \begin{align*}
        {\psi}_{0}\left( x \right) &= 0\,,\quad {\psi}_{1}\left( x \right) = \frac{\lambda \pi}{T}\sin\left( \frac{\pi}{\ell} x \right)\,,\\
        f\left( x,t \right) &= {\pi}^{2}\left[ -{\left( \frac{\lambda}{T} \right)}^{2} + \frac{1}{{\ell}^{2}}\left( \alpha\left( t \right) + \frac{{\pi}^{2}}{2\ell}\beta\left( t \right){\sin}^{2}\left( \frac{\lambda \pi}{T} t \right) \right) \right]\sin\bigg( \frac{\pi}{\ell} x \bigg)\sin\left( \frac{\lambda \pi}{T} t \right)\,.
    \end{align*}
\end{test}
\begin{test}\label{problem:test2}
    \begin{align*}
        {\psi}_{0}\left( x \right) &= \sin\left( \frac{\lambda \pi}{\ell} x \right)\,,\quad {\psi}_{1}\left( x \right) = {\pi}\sin\left( \frac{\lambda \pi}{\ell} x \right)\,,\\
        f\left( x,t \right) &= {\pi}^{2}\left[ 1 + \frac{{\lambda}^{2}}{{\ell}^{2}}\left( \alpha\left( t \right) + \frac{{\lambda}^{2} {\pi}^{2}}{2 \ell} {\e}^{2 \pi t}\beta\left( t \right) \right) \right]\sin\left( \frac{\lambda \pi}{\ell} x \right){\e}^{\pi t}\,.
    \end{align*}
\end{test}
\noindent Let $\displaystyle u\left( x,t \right) = \sin\bigg( \frac{\pi}{\ell} x \bigg)\sin\left( \frac{\lambda \pi}{T} t \right)$ and $\displaystyle u\left( x,t \right) = \sin\left( \frac{\lambda \pi}{\ell} x \right){\e}^{\pi t}$ represent the solutions to the test problems in \hyperref[problem:test1]{\bf Test \ref*{problem:test1}} and \hyperref[problem:test2]{\bf Test \ref*{problem:test2}}, respectively.

\hyperref[fig:logarithm-error-test1]{\bf Figure \ref*{fig:logarithm-error-test1}} presents log-log scale graphs demonstrating the dependence of maximum absolute errors between the exact and approximate solutions of \hyperref[problem:test1]{\bf Test \ref*{problem:test1}} under four different scenarios: $\lambda = 1$, $3$, $7$, $11$, with respect to temporal steps. The logarithmic scales are used on both the abscissa and ordinate axes. The horizontal axis corresponds to the decimal logarithms of the temporal steps denoted as ${\tau}_{j} = {10}^{-1}{\left( 100 - j \right)}^{-1}$, while the vertical axis represents the maximum absolute errors $g\left( {\tau}_{j} \right) = \max_{1 \leq k \leq {n}_{j}} {E}_{1000}\left( {k}{\tau}_{j} \right)$ for ${j}=0,1,\ldots,99$. It is assumed that when $h$ is sufficiently small, the behavior of $g\left( {\tau}_{j} \right)$ can be expressed by $g\left( {\tau}_{j} \right) \approx \mathrm{c}{\tau}_{j}^{2}$ ($\mathrm{c} = \mathrm{const} > 0$). Consequently, ${\log}_{10}\left( g\left( {\tau}_{j} \right) \right) \approx {2}\,{\log}_{10}\left( {\tau}_{j} \right) + {\log}_{10}\left( \mathrm{c} \right)$. In other words, on a log-log scale graph, the error exhibits a linear relationship with a slope of $2$, which signifies a second-order accuracy regarding the temporal variable. To perform curve fitting on the given data points in all four two-dimensional graphs, Simple Linear Regression (SLR) is employed. The resulting curves correspond to straight lines with slopes approximately equal to $2$, providing evidence that the proposed scheme \eqref{eq:semidiscrete_scheme} achieves second-order accuracy with respect to the temporal variable. In each of the four cases, the division number $m$ of the spatial domain remains fixed at $m = 1000$.

\begin{figure}[H]
    \centering
    \begin{subfigure}{.49\textwidth}
        \centering
        \includegraphics[width=1\linewidth]{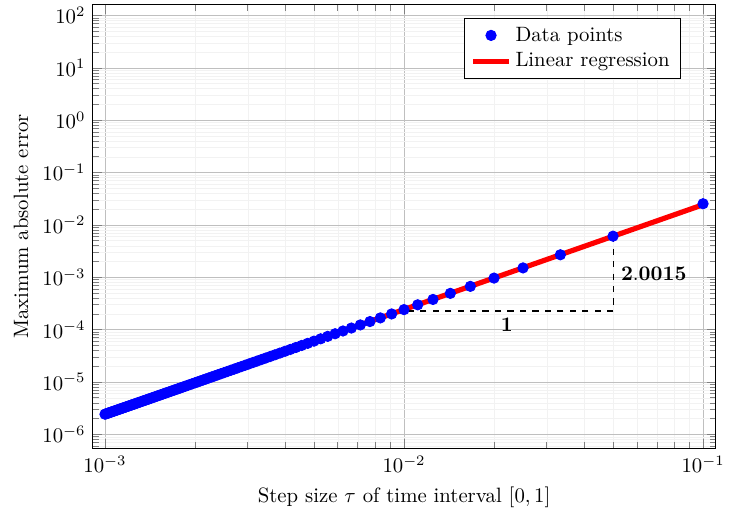}
        \caption{$\lambda = 1$.}
        \label{fig:log10osc1m1000}
    \end{subfigure}
    \hfill
    \begin{subfigure}{.49\textwidth}
        \centering
        \includegraphics[width=1\linewidth]{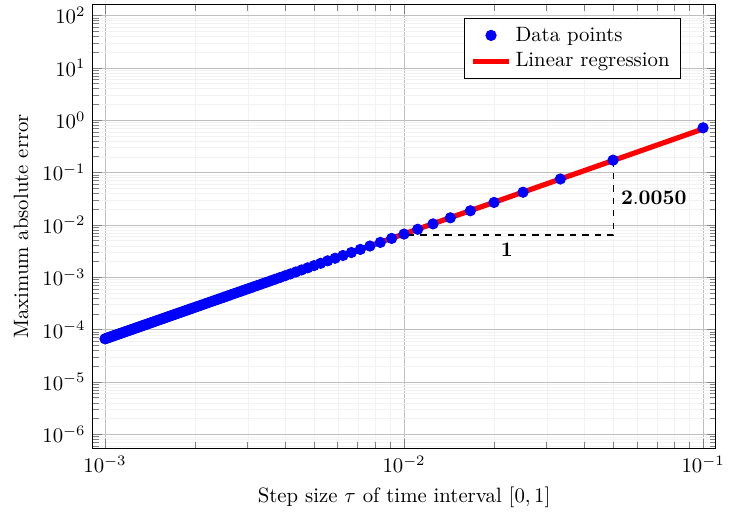}
        \caption{$\lambda = 3$.}
        \label{fig:log10osc3m1000}
    \end{subfigure}
    \hfill
    \begin{subfigure}{.49\textwidth}
        \centering
        \includegraphics[width=1\linewidth]{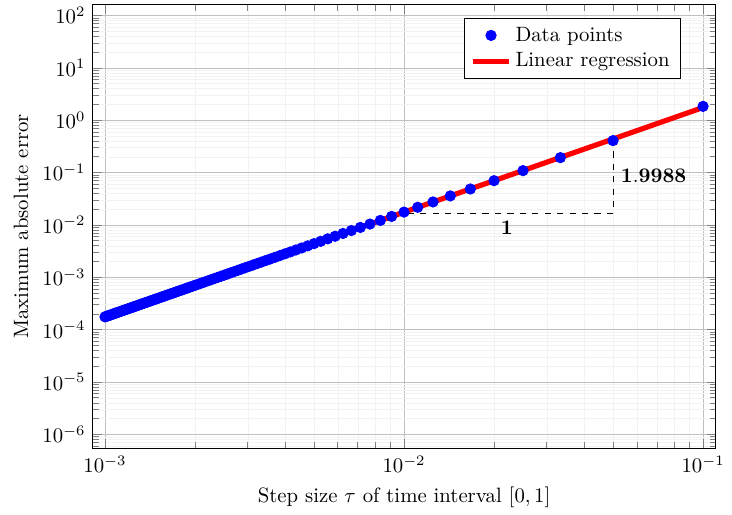}
        \caption{$\lambda = 7$.}
        \label{fig:log10osc7m1000}
    \end{subfigure}
    \hfill
    \begin{subfigure}{.49\textwidth}
        \centering
        \includegraphics[width=1\linewidth]{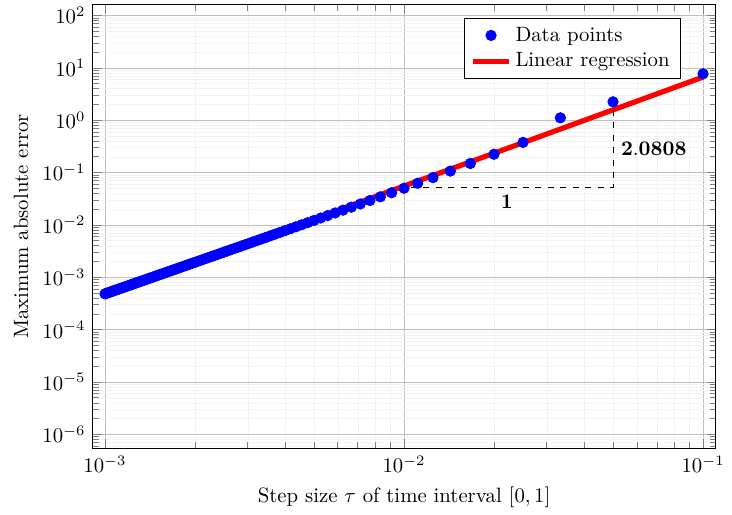}
        \caption{$\lambda = 11$.}
        \label{fig:log10osc11m1000}
    \end{subfigure}
    \caption{A scatter plot on a log-log scale graph representing the dependence of the maximum absolute errors between the exact and approximate solutions of \hyperref[problem:test1]{\bf Test \ref*{problem:test1}} on the temporal steps, along with a regression line.}
    \label{fig:logarithm-error-test1}
\end{figure}

Consider the set of sample points denoted by the data pairs $\left( X,Y \right) = \left\{ \left( {X}_{j},{Y}_{j} \right),\;j = 0,1,\ldots,99 \right\}$, where ${X}_{j}$ and ${Y}_{j}$ represent the base-$10$ logarithm of ${\tau}_{j}$ and the base-$10$ logarithm of $g\left( {\tau}_{j} \right)$, respectively.

Let us define the function
\begin{equation*}
    {Y}_{j} = \widehat{Y}_{j} + \widehat{\varepsilon}_{j}\,,\quad j = 0,1,\ldots,99\,,
\end{equation*}
where $\widehat{Y}_{j}$ (fitted values) are obtained by substituting ${X}_{j}$ into the equation of regression line: $\widehat{Y}_{j} = \widehat{b}_{1} {X}_{j} + \widehat{b}_{0}$. The residuals $\widehat{\varepsilon}_{j} = {Y}_{j} - \widehat{Y}_{j}$ represent the differences between the given data points ${Y}_{j}$ and the fitted values $\widehat{Y}_{j}$. In statistics, it is well-known that the sum of squared residuals (${\mathrm{SS}}_{\mathrm{res}}$) can be expressed as follows:
\begin{equation*}
    {\mathrm{SS}}_{\mathrm{res}} = \sum\limits_{j}{{\left( \widehat{\varepsilon}_{j} \right)}^{2}} = {\mathrm{S}}_{YY}\left( 1 - \frac{{\mathrm{S}}_{XY}^{2}}{{\mathrm{S}}_{XX}{\mathrm{S}}_{YY}} \right) = {\mathrm{S}}_{YY}\left( 1 - {r}^{2} \right)\,,
\end{equation*}
where
\begin{equation*}
    {\mathrm{S}}_{XX} = \sum\limits_{j}{{\left( \bar{X} - {X}_{j} \right)}^{2}}\,,\quad{\mathrm{S}}_{XY} = \sum\limits_{j}{\left( \bar{X} - {X}_{j} \right)\left( \bar{Y} - {Y}_{j} \right)}\,,\quad{\mathrm{S}}_{YY} = \sum\limits_{j}{{\left( \bar{Y} - {Y}_{j} \right)}^{2}}\,.
\end{equation*}
Here, $\bar{X}$ and $\bar{Y}$  represent the arithmetic means of the values of $X$ and $Y$, respectively.

In \hyperref[tab:linear-regression-table]{\bf Table \ref*{tab:linear-regression-table}}, for all four cases under consideration, the coefficient of determination, denoted as ${r}^{2}$, consistently matches a value of $1$ with a high level of precision. This indicates that a linear regression equation perfectly describes the relationship between ${X}_{j}$ and ${Y}_{j}$, with all data points exhibiting close proximity to the corresponding regression line $\widehat{Y}_{j}$.
\begin{table}[H]
    \centering


\begin{tabular}{ccccc}
    \toprule
    $\lambda$ & ${ \widehat{Y}_{j} }$ & $\min \limits _{j}{\left| \widehat{\varepsilon}_{j} \right|}$ & $\max \limits _{j}{\left| \widehat{\varepsilon}_{j} \right|}$ & ${r}^{2}$ \\
    \midrule
    $\pgfmathprintnumber[std,fixed zerofill,sci zerofill,precision=0,1000 sep={}]{1}$ & $\pgfmathprintnumber[std,std=-1:0,fixed zerofill,sci zerofill,precision=4,1000 sep={}]{2.001504221665009}\cdot {X}_{j} \pgfmathprintnumber[std,std=-1:0,fixed zerofill,sci zerofill,print sign,precision=4,1000 sep={}]{0.385903054232940}$ & $\pgfmathprintnumber[std,std=-1:0,fixed zerofill,sci zerofill,sci e,precision=4,1000 sep={}]{0.000020779907439}$ & $\pgfmathprintnumber[std,std=-1:0,fixed zerofill,sci zerofill,sci e,precision=4,1000 sep={}]{0.018305898492725}$ & $\pgfmathprintnumber[std,std=1,fixed zerofill,sci zerofill,sci e,precision=4,1000 sep={}]{0.999992789521704}$ \\
    $\pgfmathprintnumber[std,fixed zerofill,sci zerofill,precision=0,1000 sep={}]{3}$ & $\pgfmathprintnumber[std,std=-1:0,fixed zerofill,sci zerofill,precision=4,1000 sep={}]{2.005029621056219}\cdot {X}_{j} \pgfmathprintnumber[std,std=-1:0,fixed zerofill,sci zerofill,print sign,precision=4,1000 sep={}]{1.836734865245398}$ & $\pgfmathprintnumber[std,std=-1:0,fixed zerofill,sci zerofill,sci e,precision=4,1000 sep={}]{0.000005775399193}$ & $\pgfmathprintnumber[std,std=-1:0,fixed zerofill,sci zerofill,sci e,precision=4,1000 sep={}]{0.020736820463751}$ & $\pgfmathprintnumber[std,std=1,fixed zerofill,sci zerofill,sci e,precision=4,1000 sep={}]{0.999989320082047}$ \\
    $\pgfmathprintnumber[std,fixed zerofill,sci zerofill,precision=0,1000 sep={}]{7}$ & $\pgfmathprintnumber[std,std=-1:0,fixed zerofill,sci zerofill,precision=4,1000 sep={}]{1.998830092297937}\cdot {X}_{j} \pgfmathprintnumber[std,std=-1:0,fixed zerofill,sci zerofill,print sign,precision=4,1000 sep={}]{2.239433824041239}$ & $\pgfmathprintnumber[std,std=-1:0,fixed zerofill,sci zerofill,sci e,precision=4,1000 sep={}]{0.000009719898283}$ & $\pgfmathprintnumber[std,std=-1:0,fixed zerofill,sci zerofill,sci e,precision=4,1000 sep={}]{0.030555308208797}$ & $\pgfmathprintnumber[std,std=1,fixed zerofill,sci zerofill,sci e,precision=4,1000 sep={}]{0.999977423328479}$ \\
    $\pgfmathprintnumber[std,fixed zerofill,sci zerofill,precision=0,1000 sep={}]{11}$ & $\pgfmathprintnumber[std,std=-1:0,fixed zerofill,sci zerofill,precision=4,1000 sep={}]{2.080833284629712}\cdot {X}_{j} \pgfmathprintnumber[std,std=-1:0,fixed zerofill,sci zerofill,print sign,precision=4,1000 sep={}]{2.902834186027562}$ & $\pgfmathprintnumber[std,std=-1:0,fixed zerofill,sci zerofill,sci e,precision=4,1000 sep={}]{0.000094191658662}$ & $\pgfmathprintnumber[std,std=0,fixed zerofill,sci zerofill,sci e,precision=4,1000 sep={}]{0.214206899942410}$ & $\pgfmathprintnumber[std,std=1,fixed zerofill,sci zerofill,sci e,precision=4,1000 sep={}]{0.998400554457967}$ \\
    \bottomrule
\end{tabular}
    \caption{The regression line and residuals represent the relationship between the given data points and their curve fitting.}
    \label{tab:linear-regression-table}
\end{table}

In \hyperref[fig:at-the-one-point]{\bf Figure \ref*{fig:at-the-one-point}}, we present the exact and approximate solutions of \hyperref[problem:test2]{\bf Test \ref*{problem:test2}} at the specific time point $t = 1$. The solid and dashed curves correspond to the exact and approximate solutions. In \hyperref[fig:at-the-one-point]{\bf Figure \ref*{fig:at-the-one-point}} and \hyperref[tab:at-the-one-point]{\bf Table \ref*{tab:at-the-one-point}}, {\bf (a)} and {\bf (b)} represent the case where $\lambda = 5$, while {\bf (c)} and {\bf (d)} correspond to $\lambda = 17$. In the original configuration, the spatial grid has a spacing of $h = 1/32$, which is subsequently halved. Additionally, the value of the temporal step $\tau$ is equal to ${h}^{2}$ for each pair of considered cases. The test results demonstrate that solely increasing the number of temporal layers is insufficient to achieve high-order accuracy; it is also necessary to increase the number of spatial domain divisions. This phenomenon arises as a consequence of the significant oscillation number, denoted as $\lambda$, appearing in the exact solution of the considered cases. It is essential to observe that the constant in the error estimate of the proposed algorithm contains a maximum absolute value of high-order partial derivatives of the solution with respect to both temporal and spatial variables. Furthermore, it is also dependent on the maximum absolute values of $q\left( t \right)$, which in turn involve the time-varying coefficients $\alpha\left( t \right)$, $\beta\left( t \right)$, as well as the integral of ${\left[ u_{x}^{\prime}\left( x,t \right) \right]}^{2}$ with respect to $x$. From a numerical implementation perspective, it is also recommended to carefully choose values for $h$ and $\tau$ in such a manner that the condition numbers of the coefficient matrices of the tridiagonal systems obtained per temporal layer remain within a practically acceptable range. This guarantees the existence of a well-defined solution for each time step.

\begin{figure}[H]
    \centering
    \begin{subfigure}{.49\textwidth}
        \centering
        \includegraphics[width=1\linewidth]{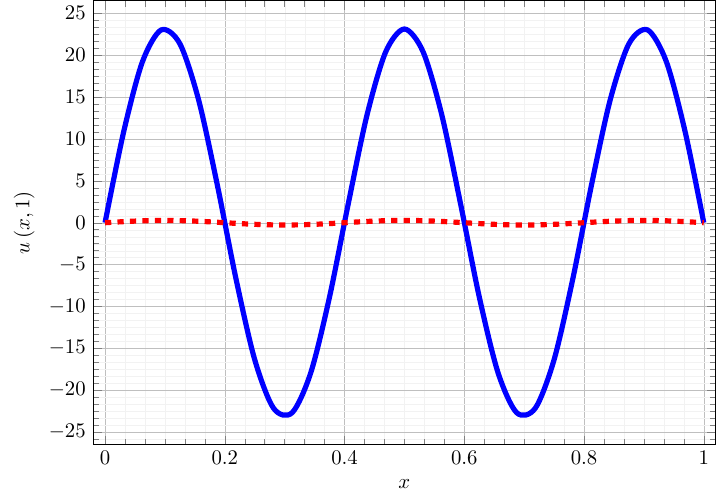}
        \caption{$\lambda = 5$, $m = 32$.}
        \label{fig:osc5n1024m32}
    \end{subfigure}
    \hfill
    \begin{subfigure}{.49\textwidth}
        \centering
        \includegraphics[width=1\linewidth]{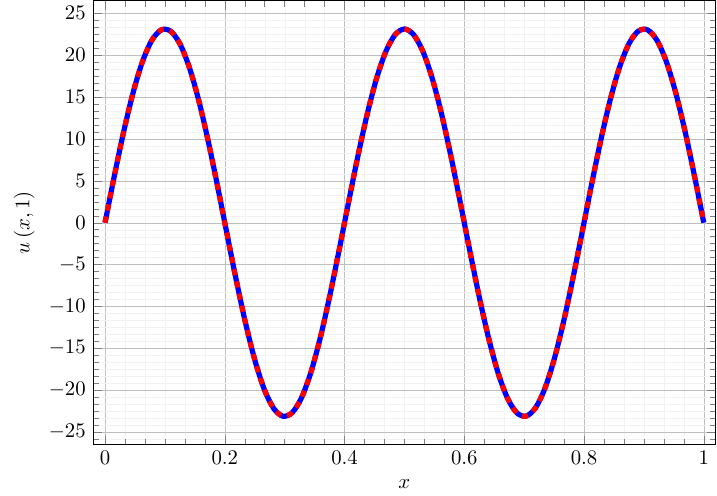}
        \caption{$\lambda = 5$, $m = 64$.}
        \label{fig:osc5n4096m64}
    \end{subfigure}
    \hfill
    \begin{subfigure}{.49\textwidth}
        \centering
        \includegraphics[width=1\linewidth]{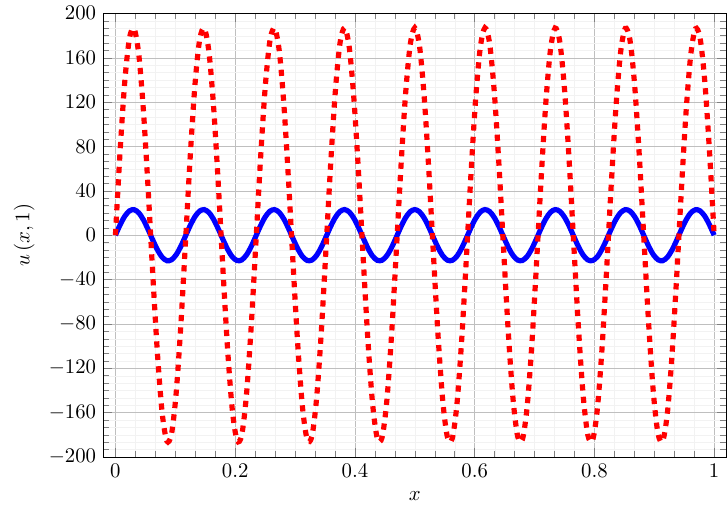}
        \caption{$\lambda = 17$, $m = 128$.}
        \label{fig:osc17n16384m128}
    \end{subfigure}
    \hfill
    \begin{subfigure}{.49\textwidth}
        \centering
        \includegraphics[width=1\linewidth]{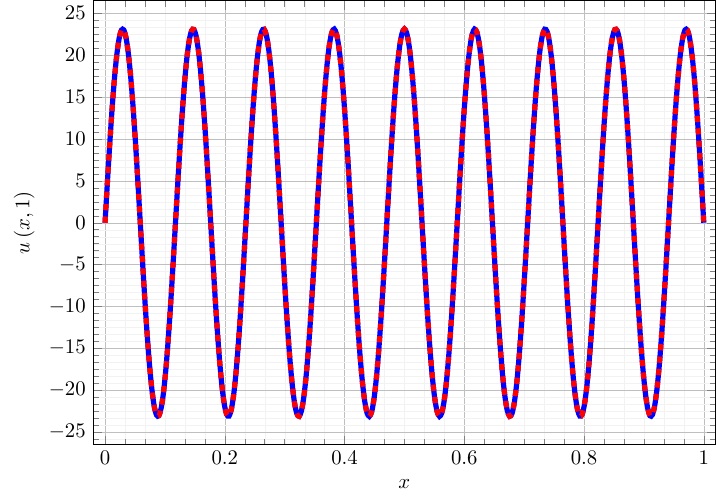}
        \caption{$\lambda = 17$, $m = 256$.}
        \label{fig:osc17n65536m256}
    \end{subfigure}
    \caption{The exact and approximate solutions of \hyperref[problem:test2]{\bf Test \ref*{problem:test2}} at the point $t = 1$ are represented by solid and dashed lines, respectively.}
    \label{fig:at-the-one-point}
\end{figure}

The resulting maximum absolute errors for each distinct case are presented in \hyperref[tab:at-the-one-point]{\bf Table \ref*{tab:at-the-one-point}}. Moreover, we evaluate the maximum values of condition numbers of the coefficient matrices associated with the tridiagonal systems \eqref{eq:tridiag_system_w} at the specified temporal layers. These condition numbers are indicated in \hyperref[tab:at-the-one-point]{\bf Table \ref*{tab:at-the-one-point}}.

It should be noted that in \hyperref[tab:at-the-one-point]{\bf Table \ref*{tab:at-the-one-point}}, our main focus centres around tracking the evolution of errors in relation to the variable $t$. In this instance, we have set the values of $\tau$ and $h$ in such a way that $\tau = {h}^{2}$, resulting in an overall approximation order of $\bigO\left( {h}^{4} \right)$ for the combined scheme. However, this specific choice does not generally ensure the stability of the condition numbers of the coefficient matrices $\mathcal{B}_{k}$ associated with the tridiagonal systems \eqref{eq:tridiag_system_w}, which can be further affected by the oscillatory behaviour of the values of ${q}_{k}$. In addition, we compute the maximum values of condition numbers of the matrices $\mathcal{B}_{j}$ for $j = 1,2,\ldots,{k - 1}$. As we pursue numerical solutions in the $\left( j + 1 \right)$-layer, it is necessary to solve the systems \eqref{eq:tridiag_system_w} for the previous $j$-th layer.

\begin{table}[H]
    \centering
    \begin{subtable}{.49\textwidth}
        \centering


\begin{tabular}{cccc}
    \toprule
    $k$ & ${t}_{k}$ & $\max\limits_{1 \leq j \leq k} {E}_{32}\left ( {t}_{j} \right )$ & $\max\limits_{1 \leq j \leq k - 1} {\kappa}_{2}\left( \mathcal{B}_{j} \right)$ \\
    \midrule
    $\pgfmathprintnumber[std,fixed zerofill,sci zerofill,precision=0,1000 sep={}]{2.560000000000000e+02}$ & $\pgfmathprintnumber[std,std=-1:0,fixed zerofill,sci zerofill,precision=2,1000 sep={}]{2.500000000000000e-01}$ & $\pgfmathprintnumber[std,std=-1:0,fixed zerofill,sci zerofill,sci e,precision=4,1000 sep={}]{1.097990120188075e-02}$ & $\pgfmathprintnumber[std,std=1,fixed zerofill,sci zerofill,sci e,precision=4,1000 sep={}]{1.960787905113772e+00}$ \\
    $\pgfmathprintnumber[std,fixed zerofill,sci zerofill,precision=0,1000 sep={}]{5.120000000000000e+02}$ & $\pgfmathprintnumber[std,std=-1:0,fixed zerofill,sci zerofill,precision=2,1000 sep={}]{5.000000000000000e-01}$ & $\pgfmathprintnumber[std,std=-1:0,fixed zerofill,sci zerofill,sci e,precision=4,1000 sep={}]{1.201590345761350e-02}$ & $\pgfmathprintnumber[std,std=1,fixed zerofill,sci zerofill,sci e,precision=4,1000 sep={}]{7.462647814624078e+00}$ \\
    $\pgfmathprintnumber[std,fixed zerofill,sci zerofill,precision=0,1000 sep={}]{7.680000000000000e+02}$ & $\pgfmathprintnumber[std,std=-1:0,fixed zerofill,sci zerofill,precision=2,1000 sep={}]{7.500000000000000e-01}$ & $\pgfmathprintnumber[std,std=-1:0,fixed zerofill,sci zerofill,sci e,precision=4,1000 sep={}]{1.613082539672880e+01}$ & $\pgfmathprintnumber[std,std=-1:0,fixed zerofill,sci zerofill,sci e,precision=4,1000 sep={}]{1.608954154553009e+02}$ \\
    $\pgfmathprintnumber[std,fixed zerofill,sci zerofill,precision=0,1000 sep={}]{1.024000000000000e+03}$ & $\pgfmathprintnumber[std,std=-1:0,fixed zerofill,sci zerofill,precision=2,1000 sep={}]{1.000000000000000e+00}$ & $\pgfmathprintnumber[std,std=-1:0,fixed zerofill,sci zerofill,sci e,precision=4,1000 sep={}]{3.547819516295949e+02}$ & $\pgfmathprintnumber[std,std=-1:0,fixed zerofill,sci zerofill,sci e,precision=4,1000 sep={}]{4.118509895840504e+02}$ \\
    \bottomrule
\end{tabular}
        \caption{$\lambda = 5$, $n = 1024$.}
        \label{tab:osc5n1024m32}
    \end{subtable}%
    \hfill
    \begin{subtable}{.49\textwidth}
        \centering


\begin{tabular}{cccc}
    \toprule
    $k$ & ${t}_{k}$ & $\max\limits_{1 \leq j \leq k} {E}_{64}\left ( {t}_{j} \right )$ & $\max\limits_{1 \leq j \leq k - 1} {\kappa}_{2}\left( \mathcal{B}_{j} \right)$ \\
    \midrule
    $\pgfmathprintnumber[std,fixed zerofill,sci zerofill,precision=0,1000 sep={}]{1.024000000000000e+03}$ & $\pgfmathprintnumber[std,std=-1:0,fixed zerofill,sci zerofill,precision=2,1000 sep={}]{2.500000000000000e-01}$ & $\pgfmathprintnumber[std,std=-1:0,fixed zerofill,sci zerofill,sci e,precision=4,1000 sep={}]{3.227321422570206e-03}$ & $\pgfmathprintnumber[std,std=1,fixed zerofill,sci zerofill,sci e,precision=4,1000 sep={}]{1.148918551166952e+00}$ \\
    $\pgfmathprintnumber[std,fixed zerofill,sci zerofill,precision=0,1000 sep={}]{2.048000000000000e+03}$ & $\pgfmathprintnumber[std,std=-1:0,fixed zerofill,sci zerofill,precision=2,1000 sep={}]{5.000000000000000e-01}$ & $\pgfmathprintnumber[std,std=-1:0,fixed zerofill,sci zerofill,sci e,precision=4,1000 sep={}]{3.227321422570206e-03}$ & $\pgfmathprintnumber[std,std=1,fixed zerofill,sci zerofill,sci e,precision=4,1000 sep={}]{2.458770420576346e+00}$ \\
    $\pgfmathprintnumber[std,fixed zerofill,sci zerofill,precision=0,1000 sep={}]{3.072000000000000e+03}$ & $\pgfmathprintnumber[std,std=-1:0,fixed zerofill,sci zerofill,precision=2,1000 sep={}]{7.500000000000000e-01}$ & $\pgfmathprintnumber[std,std=-1:0,fixed zerofill,sci zerofill,sci e,precision=4,1000 sep={}]{3.227321422570206e-03}$ & $\pgfmathprintnumber[std,std=-1:0,fixed zerofill,sci zerofill,sci e,precision=4,1000 sep={}]{1.106256960537748e+01}$ \\
    $\pgfmathprintnumber[std,fixed zerofill,sci zerofill,precision=0,1000 sep={}]{4.096000000000000e+03}$ & $\pgfmathprintnumber[std,std=-1:0,fixed zerofill,sci zerofill,precision=2,1000 sep={}]{1.000000000000000e+00}$ & $\pgfmathprintnumber[std,std=-1:0,fixed zerofill,sci zerofill,sci e,precision=4,1000 sep={}]{3.349417418220924e-03}$ & $\pgfmathprintnumber[std,std=-1:0,fixed zerofill,sci zerofill,sci e,precision=4,1000 sep={}]{6.260904373762481e+01}$ \\
    \bottomrule
\end{tabular}
        \caption{$\lambda = 5$, $n = 4096$.}
        \label{tab:osc5n4096m64}
    \end{subtable}%
    \hfill
    \begin{subtable}{.49\textwidth}
        \centering


\begin{tabular}{cccc}
    \toprule
    $k$ & ${t}_{k}$ & $\max\limits_{1 \leq j \leq k} {E}_{128}\left ( {t}_{j} \right )$ & $\max\limits_{1 \leq j \leq k - 1} {\kappa}_{2}\left( \mathcal{B}_{j} \right)$ \\
    \midrule
    $\pgfmathprintnumber[std,fixed zerofill,sci zerofill,precision=0,1000 sep={}]{4.096000000000000e+03}$ & $\pgfmathprintnumber[std,std=-1:0,fixed zerofill,sci zerofill,precision=2,1000 sep={}]{2.500000000000000e-01}$ & $\pgfmathprintnumber[std,std=-1:0,fixed zerofill,sci zerofill,sci e,precision=4,1000 sep={}]{7.870652318171523e-03}$ & $\pgfmathprintnumber[std,std=1,fixed zerofill,sci zerofill,sci e,precision=4,1000 sep={}]{1.650537229840322e+00}$ \\
    $\pgfmathprintnumber[std,fixed zerofill,sci zerofill,precision=0,1000 sep={}]{8.192000000000000e+03}$ & $\pgfmathprintnumber[std,std=-1:0,fixed zerofill,sci zerofill,precision=2,1000 sep={}]{5.000000000000000e-01}$ & $\pgfmathprintnumber[std,std=-1:0,fixed zerofill,sci zerofill,sci e,precision=4,1000 sep={}]{7.870652318171523e-03}$ & $\pgfmathprintnumber[std,std=1,fixed zerofill,sci zerofill,sci e,precision=4,1000 sep={}]{5.720181662401454e+00}$ \\
    $\pgfmathprintnumber[std,fixed zerofill,sci zerofill,precision=0,1000 sep={}]{1.228800000000000e+04}$ & $\pgfmathprintnumber[std,std=-1:0,fixed zerofill,sci zerofill,precision=2,1000 sep={}]{7.500000000000000e-01}$ & $\pgfmathprintnumber[std,std=-1:0,fixed zerofill,sci zerofill,sci e,precision=4,1000 sep={}]{1.075988851052223e-02}$ & $\pgfmathprintnumber[std,std=-1:0,fixed zerofill,sci zerofill,sci e,precision=4,1000 sep={}]{3.078399685784205e+01}$ \\
    $\pgfmathprintnumber[std,fixed zerofill,sci zerofill,precision=0,1000 sep={}]{1.638400000000000e+04}$ & $\pgfmathprintnumber[std,std=-1:0,fixed zerofill,sci zerofill,precision=2,1000 sep={}]{1.000000000000000e+00}$ & $\pgfmathprintnumber[std,std=-1:0,fixed zerofill,sci zerofill,sci e,precision=4,1000 sep={}]{1.665762002670701e+02}$ & $\pgfmathprintnumber[std,std=-1:0,fixed zerofill,sci zerofill,sci e,precision=4,1000 sep={}]{4.334956306264361e+03}$ \\
    \bottomrule
\end{tabular}
        \caption{$\lambda = 17$, $n = 16384$.}
        \label{tab:osc17n16384m128}
    \end{subtable}%
    \hfill
    \begin{subtable}{.49\textwidth}
        \centering


\begin{tabular}{cccc}
    \toprule
    $k$ & ${t}_{k}$ & $\max\limits_{1 \leq j \leq k} {E}_{256}\left ( {t}_{j} \right )$ & $\max\limits_{1 \leq j \leq k - 1} {\kappa}_{2}\left( \mathcal{B}_{j} \right)$ \\
    \midrule
    $\pgfmathprintnumber[std,fixed zerofill,sci zerofill,precision=0,1000 sep={}]{1.638400000000000e+04}$ & $\pgfmathprintnumber[std,std=-1:0,fixed zerofill,sci zerofill,precision=2,1000 sep={}]{2.500000000000000e-01}$ & $\pgfmathprintnumber[std,std=-1:0,fixed zerofill,sci zerofill,sci e,precision=4,1000 sep={}]{2.441744870245177e-03}$ & $\pgfmathprintnumber[std,std=1,fixed zerofill,sci zerofill,sci e,precision=4,1000 sep={}]{1.089318876035231e+00}$ \\
    $\pgfmathprintnumber[std,fixed zerofill,sci zerofill,precision=0,1000 sep={}]{3.276800000000000e+04}$ & $\pgfmathprintnumber[std,std=-1:0,fixed zerofill,sci zerofill,precision=2,1000 sep={}]{5.000000000000000e-01}$ & $\pgfmathprintnumber[std,std=-1:0,fixed zerofill,sci zerofill,sci e,precision=4,1000 sep={}]{2.441744870245177e-03}$ & $\pgfmathprintnumber[std,std=1,fixed zerofill,sci zerofill,sci e,precision=4,1000 sep={}]{1.995546329546939e+00}$ \\
    $\pgfmathprintnumber[std,fixed zerofill,sci zerofill,precision=0,1000 sep={}]{4.915200000000000e+04}$ & $\pgfmathprintnumber[std,std=-1:0,fixed zerofill,sci zerofill,precision=2,1000 sep={}]{7.500000000000000e-01}$ & $\pgfmathprintnumber[std,std=-1:0,fixed zerofill,sci zerofill,sci e,precision=4,1000 sep={}]{2.441744870245177e-03}$ & $\pgfmathprintnumber[std,std=1,fixed zerofill,sci zerofill,sci e,precision=4,1000 sep={}]{8.247848042227183e+00}$ \\
    $\pgfmathprintnumber[std,fixed zerofill,sci zerofill,precision=0,1000 sep={}]{6.553600000000000e+04}$ & $\pgfmathprintnumber[std,std=-1:0,fixed zerofill,sci zerofill,precision=2,1000 sep={}]{1.000000000000000e+00}$ & $\pgfmathprintnumber[std,std=-1:0,fixed zerofill,sci zerofill,sci e,precision=4,1000 sep={}]{2.441744870245177e-03}$ & $\pgfmathprintnumber[std,std=-1:0,fixed zerofill,sci zerofill,sci e,precision=4,1000 sep={}]{4.719079110249369e+01}$ \\
    \bottomrule
\end{tabular}
        \caption{$\lambda = 17$, $n = 65536$.}
        \label{tab:osc17n65536m256}
    \end{subtable}
    \caption{The maximum absolute errors between the exact and approximate solutions of \hyperref[problem:test2]{\bf Test \ref*{problem:test2}} with respect to the temporal variable, as well as the maximum values of condition numbers of the matrices associated with the tridiagonal systems at the different temporal layers.}
    \label{tab:at-the-one-point}
\end{table}

Let us consider the test problem, which represents a homogeneous formulation of the problem \eqref{eq:main_eqt}-\eqref{eq:boundary_conds} under consideration. In this context, we have constructed an exact solution for this particular problem.
\begin{test}\label{problem:test3}
    \begin{alignat*}{2}
        {\psi}_{0}\left( x \right) &= {a}\sin\left( \frac{\lambda \pi}{\ell} x \right)\,,\quad {\psi}_{1}\left( x \right) &=& \frac{a}{2}\sin\left( \frac{\lambda \pi}{\ell} x \right)\,,\quad f\left( x,t \right) = 0\,,\\
        \alpha\left( t \right) &= \frac{{\ell}^{3} - b}{4 \ell {\lambda}^{2} {\pi}^{2} {\left( 1 + t \right)}^{2}}\,,\quad \beta\left( t \right) &=& \frac{b}{2 {a}^{2} {\lambda}^{4} {\pi}^{4} {\left( 1 + t \right)}^{3}}\,,\quad 0 < b < {\ell}^{3}\,.
    \end{alignat*}
\end{test}
$\displaystyle u\left( x,t \right) = {a}\sqrt{1 + t}\sin\left( \frac{\lambda \pi}{\ell} x \right)$ is an exact solution for the specific problem instance in \hyperref[problem:test3]{\bf Test \ref*{problem:test3}}. To numerically realize the problem, we adopt the following parameter values: the length of the string, $\ell = 1$; time interval, $T = 1$; $a = 1$; and $b = {\ell}^{3} / 2$.

In \hyperref[tab:Test3-errorTable]{\bf Table \ref*{tab:Test3-errorTable}}, we present the maximum absolute errors between the exact and approximate solutions of \hyperref[problem:test3]{\bf Test \ref*{problem:test3}}, computed using formula \eqref{eq:approx_error}. Similar to the previous problem in \hyperref[problem:test2]{\bf Test \ref*{problem:test2}}, we consider four cases with the same oscillation numbers $\lambda$, assuming $\tau$ to be equal to $h$. For the case when $\lambda = 5$, we use spatial grid sizes of $m = 16$ and $m = 32$. In the case of $\lambda = 17$, we double the spatial grid sizes, resulting in $m = 64$ and $m = 128$. As demonstrated in the \hyperref[tab:Test3-errorTable]{\bf Table \ref*{tab:Test3-errorTable}}, the maximum absolute errors, corresponding to the specified temporal steps, remain sufficiently small as long as the temporal and spatial grids are not excessively small. In the generated plots, the approximate solution closely approximates the exact solution, resulting in an overlapping representation. Hence, these plots have been omitted from the article.

One of the contributing factors to these small errors is the relatively low maximum absolute value of $q\left( t \right)$, which can be attributed to the time-varying coefficients $\alpha\left( t \right)$ and $\beta\left( t \right)$. Furthermore, since $\delta = 1$ and the maximum value of $q\left( t \right)$ is small, they ensure that the condition numbers of the coefficient matrices of the tridiagonal systems obtained for each temporal layer are excellent, being close to unity (see \eqref{eq:cond_number_estimate}). This, in turn, further enhances the accuracy of the solutions.

\begin{table}[H]
    \centering
    \begin{subtable}{.49\textwidth}
        \centering


\begin{tabular}{cccc}
    \toprule
    $k$ & ${t}_{k}$ & $\max\limits_{1 \leq j \leq k} {E}_{16}\left ( {t}_{j} \right )$ \\
    \midrule
    $\pgfmathprintnumber[std,fixed zerofill,sci zerofill,precision=0,1000 sep={}]{4.000000000000000e+00}$ & $\pgfmathprintnumber[std,std=-1:0,fixed zerofill,sci zerofill,precision=2,1000 sep={}]{2.500000000000000e-01}$ & $\pgfmathprintnumber[std,std=-1:0,fixed zerofill,sci zerofill,sci e,precision=4,1000 sep={}]{2.934656207118636e-04}$ \\
    $\pgfmathprintnumber[std,fixed zerofill,sci zerofill,precision=0,1000 sep={}]{8.000000000000000e+00}$ & $\pgfmathprintnumber[std,std=-1:0,fixed zerofill,sci zerofill,precision=2,1000 sep={}]{5.000000000000000e-01}$ & $\pgfmathprintnumber[std,std=-1:0,fixed zerofill,sci zerofill,sci e,precision=4,1000 sep={}]{1.394512595313646e-03}$ \\
    $\pgfmathprintnumber[std,fixed zerofill,sci zerofill,precision=0,1000 sep={}]{1.200000000000000e+01}$ & $\pgfmathprintnumber[std,std=-1:0,fixed zerofill,sci zerofill,precision=2,1000 sep={}]{7.500000000000000e-01}$ & $\pgfmathprintnumber[std,std=-1:0,fixed zerofill,sci zerofill,sci e,precision=4,1000 sep={}]{3.092005093163319e-03}$ \\
    $\pgfmathprintnumber[std,fixed zerofill,sci zerofill,precision=0,1000 sep={}]{1.600000000000000e+01}$ & $\pgfmathprintnumber[std,std=-1:0,fixed zerofill,sci zerofill,precision=2,1000 sep={}]{1.000000000000000e+00}$ & $\pgfmathprintnumber[std,std=-1:0,fixed zerofill,sci zerofill,sci e,precision=4,1000 sep={}]{5.245705034576220e-03}$ \\
    \bottomrule
\end{tabular}
        \caption{$\lambda = 5$, $n = 16$.}
        \label{tab:Test3_osc5_n16_m16}
    \end{subtable}%
    \hfill
    \begin{subtable}{.49\textwidth}
        \centering


\begin{tabular}{cccc}
    \toprule
    $k$ & ${t}_{k}$ & $\max\limits_{1 \leq j \leq k} {E}_{32}\left ( {t}_{j} \right )$ \\
    \midrule
    $\pgfmathprintnumber[std,fixed zerofill,sci zerofill,precision=0,1000 sep={}]{8.000000000000000e+00}$ & $\pgfmathprintnumber[std,std=-1:0,fixed zerofill,sci zerofill,precision=2,1000 sep={}]{2.500000000000000e-01}$ & $\pgfmathprintnumber[std,std=-1:0,fixed zerofill,sci zerofill,sci e,precision=4,1000 sep={}]{1.218007797700871e-04}$ \\
    $\pgfmathprintnumber[std,fixed zerofill,sci zerofill,precision=0,1000 sep={}]{1.600000000000000e+01}$ & $\pgfmathprintnumber[std,std=-1:0,fixed zerofill,sci zerofill,precision=2,1000 sep={}]{5.000000000000000e-01}$ & $\pgfmathprintnumber[std,std=-1:0,fixed zerofill,sci zerofill,sci e,precision=4,1000 sep={}]{4.944182004891218e-04}$ \\
    $\pgfmathprintnumber[std,fixed zerofill,sci zerofill,precision=0,1000 sep={}]{2.400000000000000e+01}$ & $\pgfmathprintnumber[std,std=-1:0,fixed zerofill,sci zerofill,precision=2,1000 sep={}]{7.500000000000000e-01}$ & $\pgfmathprintnumber[std,std=-1:0,fixed zerofill,sci zerofill,sci e,precision=4,1000 sep={}]{1.051797882890781e-03}$ \\
    $\pgfmathprintnumber[std,fixed zerofill,sci zerofill,precision=0,1000 sep={}]{3.200000000000000e+01}$ & $\pgfmathprintnumber[std,std=-1:0,fixed zerofill,sci zerofill,precision=2,1000 sep={}]{1.000000000000000e+00}$ & $\pgfmathprintnumber[std,std=-1:0,fixed zerofill,sci zerofill,sci e,precision=4,1000 sep={}]{1.750059916048707e-03}$ \\
    \bottomrule
\end{tabular}
        \caption{$\lambda = 5$, $n = 32$.}
        \label{tab:Test3_osc5_n32_m32}
    \end{subtable}%
    \hfill
    \begin{subtable}{.49\textwidth}
        \centering


\begin{tabular}{cccc}
    \toprule
    $k$ & ${t}_{k}$ & $\max\limits_{1 \leq j \leq k} {E}_{64}\left ( {t}_{j} \right )$ \\
    \midrule
    $\pgfmathprintnumber[std,fixed zerofill,sci zerofill,precision=0,1000 sep={}]{1.600000000000000e+01}$ & $\pgfmathprintnumber[std,std=-1:0,fixed zerofill,sci zerofill,precision=2,1000 sep={}]{2.500000000000000e-01}$ & $\pgfmathprintnumber[std,std=-1:0,fixed zerofill,sci zerofill,sci e,precision=4,1000 sep={}]{4.372569443242824e-04}$ \\
    $\pgfmathprintnumber[std,fixed zerofill,sci zerofill,precision=0,1000 sep={}]{3.200000000000000e+01}$ & $\pgfmathprintnumber[std,std=-1:0,fixed zerofill,sci zerofill,precision=2,1000 sep={}]{5.000000000000000e-01}$ & $\pgfmathprintnumber[std,std=-1:0,fixed zerofill,sci zerofill,sci e,precision=4,1000 sep={}]{1.627974093104445e-03}$ \\
    $\pgfmathprintnumber[std,fixed zerofill,sci zerofill,precision=0,1000 sep={}]{4.800000000000000e+01}$ & $\pgfmathprintnumber[std,std=-1:0,fixed zerofill,sci zerofill,precision=2,1000 sep={}]{7.500000000000000e-01}$ & $\pgfmathprintnumber[std,std=-1:0,fixed zerofill,sci zerofill,sci e,precision=4,1000 sep={}]{3.375062264494133e-03}$ \\
    $\pgfmathprintnumber[std,fixed zerofill,sci zerofill,precision=0,1000 sep={}]{6.400000000000000e+01}$ & $\pgfmathprintnumber[std,std=-1:0,fixed zerofill,sci zerofill,precision=2,1000 sep={}]{1.000000000000000e+00}$ & $\pgfmathprintnumber[std,std=-1:0,fixed zerofill,sci zerofill,sci e,precision=4,1000 sep={}]{5.547277533604733e-03}$ \\
    \bottomrule
\end{tabular}
        \caption{$\lambda = 17$, $n = 64$.}
        \label{tab:Test3_osc17_n64_m64}
    \end{subtable}%
    \hfill
    \begin{subtable}{.49\textwidth}
        \centering


\begin{tabular}{cccc}
    \toprule
    $k$ & ${t}_{k}$ & $\max\limits_{1 \leq j \leq k} {E}_{128}\left ( {t}_{j} \right )$ \\
    \midrule
    $\pgfmathprintnumber[std,fixed zerofill,sci zerofill,precision=0,1000 sep={}]{3.200000000000000e+01}$ & $\pgfmathprintnumber[std,std=-1:0,fixed zerofill,sci zerofill,precision=2,1000 sep={}]{2.500000000000000e-01}$ & $\pgfmathprintnumber[std,std=-1:0,fixed zerofill,sci zerofill,sci e,precision=4,1000 sep={}]{1.034233894383618e-04}$ \\
    $\pgfmathprintnumber[std,fixed zerofill,sci zerofill,precision=0,1000 sep={}]{6.400000000000000e+01}$ & $\pgfmathprintnumber[std,std=-1:0,fixed zerofill,sci zerofill,precision=2,1000 sep={}]{5.000000000000000e-01}$ & $\pgfmathprintnumber[std,std=-1:0,fixed zerofill,sci zerofill,sci e,precision=4,1000 sep={}]{3.803725249600376e-04}$ \\
    $\pgfmathprintnumber[std,fixed zerofill,sci zerofill,precision=0,1000 sep={}]{9.600000000000000e+01}$ & $\pgfmathprintnumber[std,std=-1:0,fixed zerofill,sci zerofill,precision=2,1000 sep={}]{7.500000000000000e-01}$ & $\pgfmathprintnumber[std,std=-1:0,fixed zerofill,sci zerofill,sci e,precision=4,1000 sep={}]{7.852195217923352e-04}$ \\
    $\pgfmathprintnumber[std,fixed zerofill,sci zerofill,precision=0,1000 sep={}]{1.280000000000000e+02}$ & $\pgfmathprintnumber[std,std=-1:0,fixed zerofill,sci zerofill,precision=2,1000 sep={}]{1.000000000000000e+00}$ & $\pgfmathprintnumber[std,std=-1:0,fixed zerofill,sci zerofill,sci e,precision=4,1000 sep={}]{1.287558497556462e-03}$ \\
    \bottomrule
\end{tabular}
        \caption{$\lambda = 17$, $n = 128$.}
        \label{tab:Test3_osc17_n128_m128}
    \end{subtable}
    \caption{The maximum absolute errors between the exact and approximate solutions of \hyperref[problem:test3]{\bf Test \ref*{problem:test3}}, specifically in relation to the temporal variable.}
    \label{tab:Test3-errorTable}
\end{table}

Consider the homogeneous form of the Kirchhoff string equation \eqref{eq:main_eqt} for which the exact solution is currently unknown. In this situation, we opt for specific parameter values: the length of the string, $\ell = 4$; the time interval, $T = 1$; and the time-dependent coefficients are fixed, with $\alpha\left( t \right) = \beta\left( t \right) = 1$.
\begin{test}\label{problem:test4}
    \begin{equation*}
        {\psi}_{0}\left( x \right) = \exp\left( -{\left( {2}{x} - {\ell} \right)}^{2} \right)\left( {\widetilde{P}}_{33}\left( x \right) - {\widetilde{P}}_{31}\left( x \right) \right)\,,\quad {\psi}_{1}\left( x \right) = 0\,,\quad f\left( x,t \right) = 0\,.
    \end{equation*}
    Here, ${\widetilde{P}}_{N}\left( x \right)$ denotes the shifted Legendre polynomials of degree $N$, defined as $\displaystyle {\widetilde{P}}_{N}\left( x \right) = {P}_{N}\left( \frac{2}{\ell} x - 1 \right)$, where ${P}_{N}\left( x \right)$ represents the Legendre polynomials of degree $N$. The difference of the shifted Legendre polynomials ensures the fulfilment of the compatibility conditions for ${\psi}_{0}\left( x \right)$, namely ${\psi}_{0}\left( 0 \right) = {\psi}_{0}\left( \ell \right) = 0$.
\end{test}
\begin{figure}[H]
    \centering
    \includegraphics[width=0.49\linewidth]{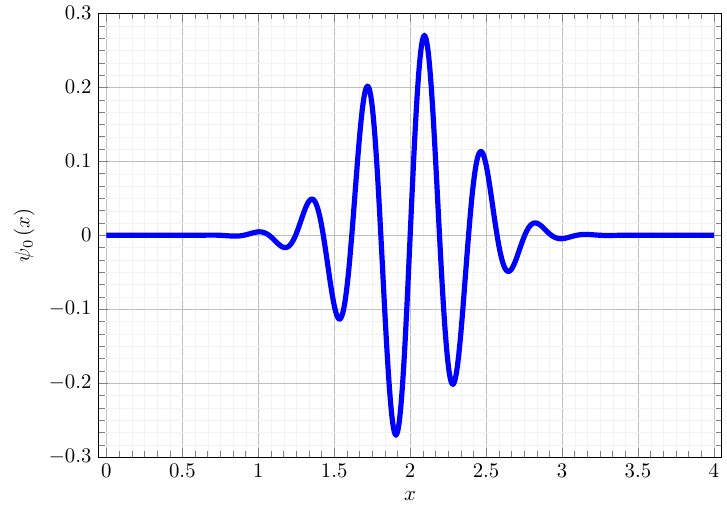}
    \caption{Graph of the initial condition ${\psi}_{0}\left( x \right)$.}
    \label{fig:plot_graph_of_psi0}
\end{figure}
Before we begin explaining the approach used to derive the solution for this particular scenario, let us introduce the essential notations required for better understanding.

Let
\begin{equation*}
    u\left( {x}_{i},{t}_{k}^{\left( j \right)} \right) \approx {u}_{k,i}^{\left( j \right)}\,,\quad i = 0,1,\ldots,m\,,\quad k = 0,1,\ldots,{n}_{j}\,,
\end{equation*}
where
\begin{equation*}
    {n}_{j} = {2}^{j} {n}_{0}\,,\quad {t}_{k}^{\left( j \right)} = {k}{\tau}_{j}\,,\quad {\tau}_{j} = \frac{T}{{n}_{j}}\,.
\end{equation*}
We propose the following criteria to assess the errors between every two successive approximate solutions for $j = 1,2,\ldots\,$, namely:
\begin{equation*}
    {\widetilde{E}}_{m}^{\left( j \right)} = \max\limits_{0 \leq k \leq {n}_{j}} \max\limits_{0 \leq i \leq m}\left|  {u}_{k,i}^{\left( j - 1 \right)} - {u}_{2k,i}^{\left( j \right)} \right|\,.
\end{equation*}
Suppose that approximate solutions ${u}_{k,i}^{\left( 0 \right)}$ and ${u}_{2k,i}^{\left( 1 \right)}$ are found, and an error tolerance $tol$ is fixed. If ${\widetilde{E}}_{m}^{\left( j \right)} \leq tol$, then terminate the process and return the solution ${u}_{2k,i}^{\left( j \right)}$.

We have implemented six different cases, where the error tolerance is defined as $tol = 10^{-r}$ with $r = 1,2,\ldots,6$. The spatial domain is discretized into $m = 1024$ segments, and the initial division of the temporal domain is prescribed as ${n}_{0} = 128$. The numerical results are compiled in \hyperref[tab:ErrorTolerance]{\bf Table \ref*{tab:ErrorTolerance}}. Notably, the obtained numerical solutions for all six cases exhibit remarkable similarity through their respective plots, providing convincing evidence of effectively approximating the exact solution to the given problem.
\begin{table}[H]
    \centering


\begin{tabular}{cccc}
    \toprule
    $tol$ & $\max\left( {n}_{j} \right)$ & $\max\left( j \right)$ & ${\widetilde{E}}_{m}^{\left( j \right)}$ \\
    \midrule
    $\pgfmathprintnumber[std,fixed zerofill,sci zerofill,sci e,precision=0,1000 sep={}]{1e-01}$ & ${512}$   & ${2}$ & $\pgfmathprintnumber[std,std=-1:0,fixed zerofill,sci zerofill,sci e,precision=4,1000 sep={}]{3.679055412531656e-02}$ \\
    $\pgfmathprintnumber[std,fixed zerofill,sci zerofill,sci e,precision=0,1000 sep={}]{1e-02}$ & ${1024}$  & ${3}$ & $\pgfmathprintnumber[std,std=-1:0,fixed zerofill,sci zerofill,sci e,precision=4,1000 sep={}]{8.992767516970554e-03}$ \\
    $\pgfmathprintnumber[std,fixed zerofill,sci zerofill,sci e,precision=0,1000 sep={}]{1e-03}$ & ${4096}$  & ${5}$ & $\pgfmathprintnumber[std,std=-1:0,fixed zerofill,sci zerofill,sci e,precision=4,1000 sep={}]{2.186109009167927e-04}$ \\
    $\pgfmathprintnumber[std,fixed zerofill,sci zerofill,sci e,precision=0,1000 sep={}]{1e-04}$ & ${8192}$  & ${6}$ & $\pgfmathprintnumber[std,std=-1:0,fixed zerofill,sci zerofill,sci e,precision=4,1000 sep={}]{1.097694028723284e-05}$ \\
    $\pgfmathprintnumber[std,fixed zerofill,sci zerofill,sci e,precision=0,1000 sep={}]{1e-05}$ & ${16384}$ & ${7}$ & $\pgfmathprintnumber[std,std=-1:0,fixed zerofill,sci zerofill,sci e,precision=4,1000 sep={}]{1.815425744539079e-06}$ \\
    $\pgfmathprintnumber[std,fixed zerofill,sci zerofill,sci e,precision=0,1000 sep={}]{1e-06}$ & ${32768}$ & ${8}$ & $\pgfmathprintnumber[std,std=-1:0,fixed zerofill,sci zerofill,sci e,precision=4,1000 sep={}]{6.995203757007018e-07}$ \\
    \bottomrule
\end{tabular}
    \caption{Error tolerance and computed maximum temporal layer with corresponding loop counter.}
    \label{tab:ErrorTolerance}
\end{table}

For illustrative purposes, we present \hyperref[fig:UnkSol-FourSpecific]{\bf Figure \ref*{fig:UnkSol-FourSpecific}} under the condition that the error tolerance is $tol = {10}^{-6}$. The numerical approximation of the solution is assessed at four specific temporal instances, namely: $t = 0.25$, $t = 0.5$, $t = 0.75$, and $t = 1$.

In \hyperref[tab:UnkSol-CondNumbs]{\bf Table \ref*{tab:UnkSol-CondNumbs}}, we present data corresponding to the error tolerance $tol = {10}^{-6}$, along with the respective number of divisions of the temporal domain ${n}_{j}$, at which the process halts with ${\widetilde{E}}_{m}^{\left( j \right)} \leq {10}^{-6}$. Additionally, \hyperref[tab:UnkSol-CondNumbs]{\bf Table \ref*{tab:UnkSol-CondNumbs}} includes the maximum condition numbers denoted as $\max_{1 \leq k \leq {n}_{j} - 1} {\kappa}_{2}\left( \mathcal{B}_{k} \right)$ for each maximum temporal layer. Let us remind you that our method involves the determination of an approximate solution for each $\left( k + 1 \right)$-th layer, wherein a system of linear equations \eqref{eq:tridiag_system_w} is solved based on the preceding $k$-th layer, with $k$ taking values from 1 to ${n}_{j} - 1$. Furthermore, the resulting condition numbers conform to the estimate provided in \eqref{eq:cond_number_estimate}.
\begin{table}[H]
    \centering


\begin{tabular}{cccc}
    \toprule
    $j$ & ${\widetilde{E}}_{m}^{\left( j \right)}$ & ${n}_{j}$ & $\max\limits_{1 \leq k \leq {n}_{j} - 1} {\kappa}_{2}\left( \mathcal{B}_{k} \right)$ \\
    \midrule
    $0$ & $-$ & $\pgfmathprintnumber[std,fixed zerofill,sci zerofill,precision=0,1000 sep={}]{1.280000000000000e+02}$ & $\pgfmathprintnumber[std,std=-1:0,fixed zerofill,sci zerofill,sci e,precision=6,1000 sep={}]{5.754131612820781e+01}$ \\
    $1$ & $\pgfmathprintnumber[std,std=1,fixed zerofill,sci zerofill,sci e,precision=6,1000 sep={}]{1.391002508173658e-01}$ & $\pgfmathprintnumber[std,fixed zerofill,sci zerofill,precision=0,1000 sep={}]{2.560000000000000e+02}$ & $\pgfmathprintnumber[std,std=-1:0,fixed zerofill,sci zerofill,sci e,precision=6,1000 sep={}]{1.549921405644663e+01}$ \\
    $2$ & $\pgfmathprintnumber[std,std=-1:0,fixed zerofill,sci zerofill,sci e,precision=6,1000 sep={}]{3.679055412531656e-02}$ & $\pgfmathprintnumber[std,fixed zerofill,sci zerofill,precision=0,1000 sep={}]{5.120000000000000e+02}$ & $\pgfmathprintnumber[std,std=1,fixed zerofill,sci zerofill,sci e,precision=6,1000 sep={}]{4.482612604611548e+00}$ \\
    $3$ & $\pgfmathprintnumber[std,std=-1:0,fixed zerofill,sci zerofill,sci e,precision=6,1000 sep={}]{8.992767516970554e-03}$ & $\pgfmathprintnumber[std,fixed zerofill,sci zerofill,precision=0,1000 sep={}]{1.024000000000000e+03}$ & $\pgfmathprintnumber[std,std=1,fixed zerofill,sci zerofill,sci e,precision=6,1000 sep={}]{1.705540971213571e+00}$ \\
    $4$ & $\pgfmathprintnumber[std,std=-1:0,fixed zerofill,sci zerofill,sci e,precision=6,1000 sep={}]{1.800673629236365e-03}$ & $\pgfmathprintnumber[std,fixed zerofill,sci zerofill,precision=0,1000 sep={}]{2.048000000000000e+03}$ & $\pgfmathprintnumber[std,std=1,fixed zerofill,sci zerofill,sci e,precision=6,1000 sep={}]{1.099434809122676e+00}$ \\
    $5$ & $\pgfmathprintnumber[std,std=-1:0,fixed zerofill,sci zerofill,sci e,precision=6,1000 sep={}]{2.186109009167927e-04}$ & $\pgfmathprintnumber[std,fixed zerofill,sci zerofill,precision=0,1000 sep={}]{4.096000000000000e+03}$ & $\pgfmathprintnumber[std,std=1,fixed zerofill,sci zerofill,sci e,precision=6,1000 sep={}]{1.009048738473097e+00}$ \\
    $6$ & $\pgfmathprintnumber[std,std=-1:0,fixed zerofill,sci zerofill,sci e,precision=6,1000 sep={}]{1.097694028723284e-05}$ & $\pgfmathprintnumber[std,fixed zerofill,sci zerofill,precision=0,1000 sep={}]{8.192000000000000e+03}$ & $\pgfmathprintnumber[std,std=1,fixed zerofill,sci zerofill,sci e,precision=6,1000 sep={}]{1.000638283749081e+00}$ \\
    $7$ & $\pgfmathprintnumber[std,std=-1:0,fixed zerofill,sci zerofill,sci e,precision=6,1000 sep={}]{1.815425744539079e-06}$ & $\pgfmathprintnumber[std,fixed zerofill,sci zerofill,precision=0,1000 sep={}]{1.638400000000000e+04}$ & $\pgfmathprintnumber[std,std=1,fixed zerofill,sci zerofill,sci e,precision=6,1000 sep={}]{1.000041217900318e+00}$ \\
    $8$ & $\pgfmathprintnumber[std,std=-1:0,fixed zerofill,sci zerofill,sci e,precision=6,1000 sep={}]{6.995203757007018e-07}$ & $\pgfmathprintnumber[std,fixed zerofill,sci zerofill,precision=0,1000 sep={}]{3.276800000000000e+04}$ & $\pgfmathprintnumber[std,std=1,fixed zerofill,sci zerofill,sci e,precision=6,1000 sep={}]{1.000002597690866e+00}$ \\
    \bottomrule
\end{tabular}
    \caption{The results for the error tolerance $tol = {10}^{-6}$, including its corresponding number of divisions of the temporal domain and the maximum values of condition numbers.}
    \label{tab:UnkSol-CondNumbs}
\end{table}

\begin{figure}[H]
    \centering
    \begin{subfigure}{.49\textwidth}
        \centering
        \includegraphics[width=1\linewidth]{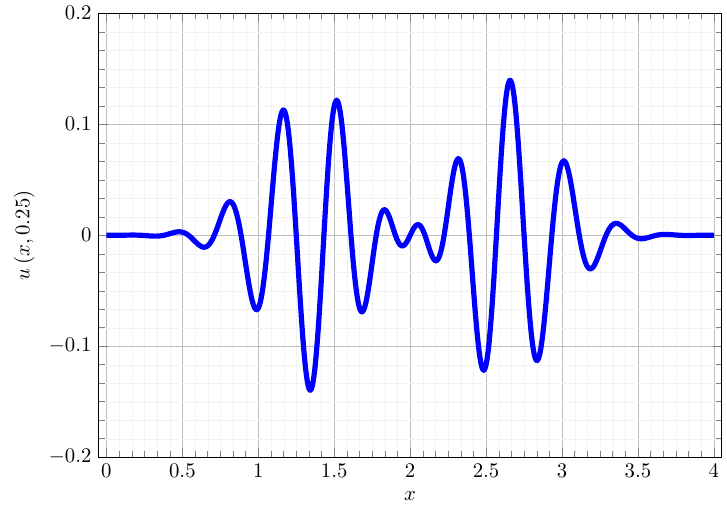}
        \caption{$t = 0.25$, $m = 1024$, and $n = 32768$.}
        \label{fig:UnkSol-Time0.25}
    \end{subfigure}
    \hfill
    \begin{subfigure}{.49\textwidth}
        \centering
        \includegraphics[width=1\linewidth]{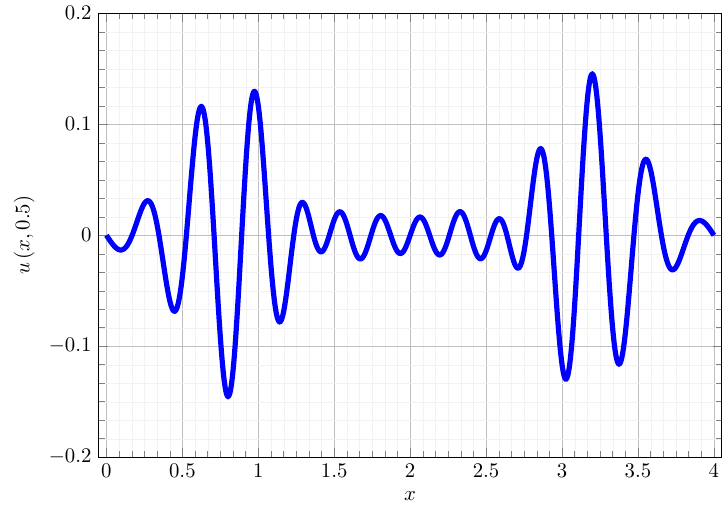}
        \caption{$t = 0.5$, $m = 1024$, and $n = 32768$.}
        \label{fig:UnkSol-Time0.50}
    \end{subfigure}
    \hfill
    \begin{subfigure}{.49\textwidth}
        \centering
        \includegraphics[width=1\linewidth]{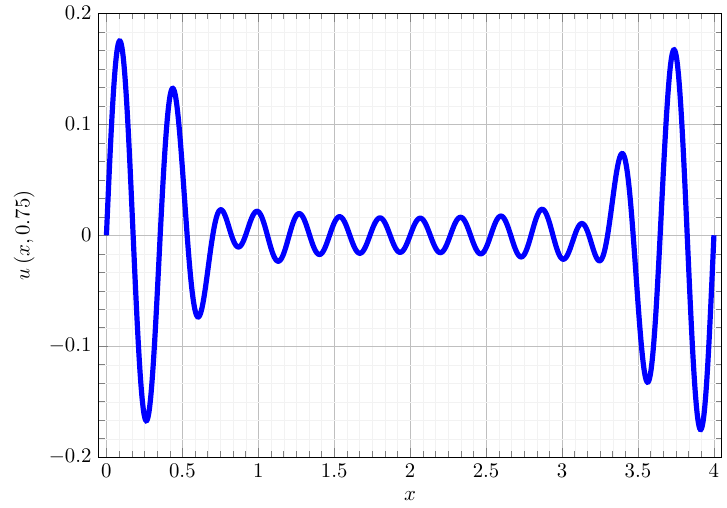}
        \caption{$t = 0.75$, $m = 1024$, and $n = 32768$.}
        \label{fig:UnkSol-Time0.75}
    \end{subfigure}
    \hfill
    \begin{subfigure}{.49\textwidth}
        \centering
        \includegraphics[width=1\linewidth]{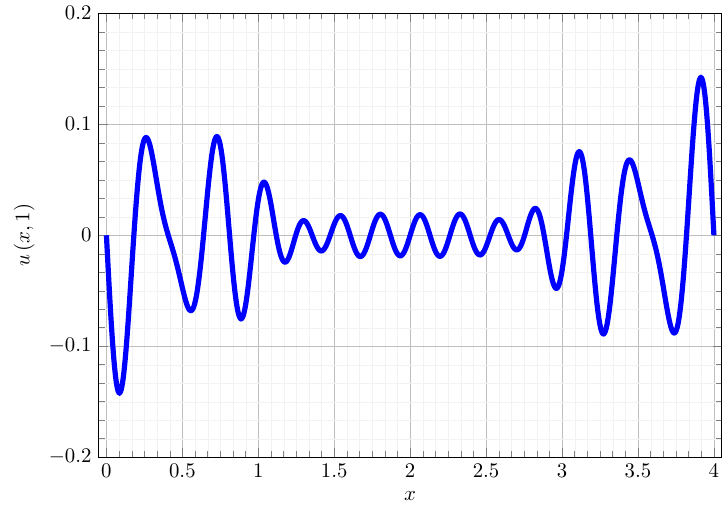}
        \caption{$t = 1$, $m = 1024$, and $n = 32768$.}
        \label{fig:UnkSol-Time1.00}
    \end{subfigure}
    \caption{Graph plots illustrate the numerical solutions to the given problem at four specific temporal instances, where the error tolerance is set to $tol = {10}^{-6}$.}
    \label{fig:UnkSol-FourSpecific}
\end{figure}

\section*{Funding}\label{sec:funding}
\noindent The second author of this article was supported by the Shota Rustaveli National Science Foundation of Georgia (SRNSFG) [grant number: FR-21-301, project title: ``Metamaterials with Cracks and Wave Diffraction Problems''].

\section*{Acknowledgement}\label{sec:acknowledgement}
\noindent The authors of this article would like to express their gratitude to the referees for their detailed consideration of our work and for providing helpful remarks. Taking these remarks into consideration played an important role in refining this paper. We would also like to extend our special appreciation to Dr. Andreas A. Buchheit for his invaluable contributions to our research. His expert insights into the practical implications and physical interpretations of the discussed problem, along with his extensive discussions and crucial recommendations, notably enhanced the overall quality and depth of our article. We also sincerely thank Prof. Dr. Menno Poot for sparing his time to engage in a thoughtful discussion regarding the problem under consideration.

\def\printchapternonum{}
\bibliographystyle{plainnat}
\bibliography{bibsource}

\end{document}